\documentclass[11pt]{amsart}
\usepackage{amscd,amssymb}
\usepackage[arrow,matrix]{xy}
\usepackage[colorlinks,plainpages,backref,urlcolor=blue]{hyperref}

\topmargin=0.15in
\textwidth5.9in
\textheight7.85in
\oddsidemargin=0.3in
\evensidemargin=0.3in

\newtheorem{theorem}[subsection]{Theorem}
\newtheorem{lemma}[subsection]{Lemma}
\newtheorem{prop}[subsection]{Proposition}
\newtheorem{corollary}[subsection]{Corollary}

\newtheorem{thm}{Theorem}

\theoremstyle{definition}
\newtheorem{remark}[subsection]{Remark}
\newtheorem{definition}[subsection]{Definition}
\newtheorem{example}[subsection]{Example}
\newtheorem*{ack}{Acknowledgment}

\numberwithin{equation}{section}
\setcounter{tocdepth}{1}

\newcommand{\cP}{{\mathcal P}}

\newcommand{\M}{{\mathcal M}}
\newcommand{\F}{{\mathcal F}}
\newcommand{\A}{{\mathcal A}}
\newcommand{\OO}{{\mathcal O}}
\newcommand{\B}{{\mathcal B}}

\newcommand{\CC}{{\mathcal C}}
\newcommand{\LL}{{\mathcal L}}
\newcommand{\E}{{\mathcal E}}
\newcommand{\V}{{\mathcal V}}
\newcommand{\R}{{\mathcal R}}

\newcommand{\cH}{{\mathcal H}}
\newcommand{\cN}{{\mathcal N}}

\newcommand{\cI}{{\mathcal I}}

\newcommand{\m}{\mathfrak {m}}
  
\newcommand{\bb}{\mathfrak {b}}
\newcommand{\gl}{\mathfrak {gl}}

\newcommand{\fs}{\mathfrak {s}}
\newcommand{\fg}{\mathfrak {g}}
\newcommand{\fp}{\mathfrak {p}}
\newcommand{\fq}{\mathfrak {q}}

\newcommand{\wR}{\widehat{R}}
\newcommand{\woR}{\widehat{\overline{R}}}

\newcommand{\oR}{\overline{R}}
\newcommand{\oP}{\overline{P}}
\newcommand{\ow}{\overline{\omega}}

\newcommand{\sCDGA}{{\sf CDGA}}
\newcommand{\sCGA}{{\sf CGA}}
\newcommand{\sDGL}{{\sf DGL}}
\newcommand{\sSS}{{\sf SS}}
\newcommand{\sArt}{{\sf Art}}
\newcommand{\sAlg}{{\sf Alg}}
\newcommand{\sLie}{{\sf Lie}}
\newcommand{\sMin}{{\sf Min}}
\newcommand{\sMod}{{\sf Mod}}
\newcommand{\sTop}{{\sf Top}}
\newcommand{\sComm}{{\sf Comm}}
\newcommand{\sACDGA}{{\sf ACDGA}}
\newcommand{\kk}{\kappa}
\newcommand{\BB}{\mathbb{B}}
\newcommand{\T}{\mathbb{T}}
\newcommand{\Z}{\mathbb{Z}}
\newcommand{\N}{\mathbb{N}}
\newcommand{\Q}{\mathbb{Q}}
\newcommand{\RR}{\mathbb{R}}
\newcommand{\C}{\mathbb{C}}
\newcommand{\HH}{\mathbb{H}}
\newcommand{\K}{\Bbbk}
\newcommand{\PP}{\mathbb{P}}

\newcommand{\bK}{\mathbb{K}}

\newcommand{\bS}{\mathbb{S}}
\newcommand{\bG}{\mathbb{G}}
\newcommand{\bA}{\mathbb{A}}
\newcommand{\eexp}{\mathsf{exp}}

\newcommand{\tA}{\widetilde{{\mathcal A}}}

\DeclareMathOperator{\Hom}{Hom}

\DeclareMathOperator{\id}{id}
\DeclareMathOperator{\pr}{pr}

\DeclareMathOperator{\gr}{gr}
\DeclareMathOperator{\GL}{GL}
\DeclareMathOperator{\SL}{SL}
\DeclareMathOperator{\ab}{ab}

\DeclareMathOperator{\ad}{ad}
\DeclareMathOperator{\Ad}{Ad}

\DeclareMathOperator{\Sym}{Sym}

\DeclareMathOperator{\Sing}{Sing}
\DeclareMathOperator{\loc}{loc}

\DeclareMathOperator{\Aut}{Aut}
\DeclareMathOperator{\End}{End}
\DeclareMathOperator{\Ob}{Ob}
\DeclareMathOperator{\alggr}{alggr}
\DeclareMathOperator{\mon}{mon}
\DeclareMathOperator{\alg}{alg}
\DeclareMathOperator{\diff}{diff}
\DeclareMathOperator{\pt}{pt}
\DeclareMathOperator{\Mal}{Mal}

\DeclareMathOperator{\red}{red}
\DeclareMathOperator{\aom}{Aom}

\newcommand{\surj}{\twoheadrightarrow}
\newcommand{\isom}{\xrightarrow{\,\simeq\,}}
\newcommand{\eqv}{{\Longleftrightarrow}}


\begin{document}

\title[Cohomology jump loci from an analytic viewpoint]
{Nonabelian cohomology jump loci from an analytic viewpoint}

\author[A.~Dimca]{Alexandru Dimca$^{1}$}
\address{Institut Universitaire de France et Laboratoire J.A. Dieudonn\'e, UMR du CNRS 7351,
                 Universit\'e de Nice Sophia-Antipolis,
                 Parc Valrose,
                 06108 Nice Cedex 02,
                 France}

\email{dimca@unice.fr}

\author[\c{S}.~Papadima ]{\c{S}tefan Papadima$^{2}$}
\address{Simion Stoilow Institute of Mathematics,
P.O. Box 1-764,
RO-014700 Bucharest, Romania}
\email{Stefan.Papadima@imar.ro}

\thanks{$^1$Partially supported by the  ANR-08-BLAN-0317-02 (SEDIGA)}
\thanks{$^2$Partially supported by PN-II-ID-PCE-2011-3-0288, grant 
132/05.10.2011}

\subjclass[2000]{%
Primary
14B12, 55N25; 
Secondary
14M12, 20C15, 55P62.  
}

\keywords{representation variety, flat connection, monodromy, cohomology support loci, 
covariant derivative, Malcev completion, minimal model, analytic local ring, Artinian ring, 
formal space, quasi-projective manifold, nilmanifold, arrangement}

\begin{abstract}
For a space, we investigate its CJL (cohomology jump loci), sitting inside varieties of 
representations of the fundamental group. To do this, for a CDG (commutative differential graded) algebra, 
we define its CJL,
sitting inside varieties of  flat connections. The analytic germs at the origin
$1$ of representation varieties are shown to be determined by the Sullivan $1$-minimal model of the space.
Up to a degree $q$, the two types of CJL have 
the same analytic germs at the origins, when the space and the algebra have the same $q$-minimal model.
We apply this general approach to formal spaces (obtaining the degeneration of the
Farber-Novikov spectral sequence), quasi-projective manifolds, and finitely generated nilpotent groups.
When the CDG algebra has positive weights, we elucidate some of the structure of (rank one complex) 
topological and algebraic CJL:
all their irreducible components passing through the origin are connected affine subtori,
respectively rational linear subspaces. Furthermore, the global exponential map sends all algebraic
CJL into their topological counterpart.
\end{abstract}

\maketitle

\tableofcontents

\section{Introduction and statement of main results} 
\label{sec:intro}

Representation varieties of the fundamental group into linear algebraic groups parametrize (types of) finite rank
local systems on a space. We tackle the basic question of computing the corresponding twisted Betti numbers, i.e.,
describing the so-called jump subvarieties. Our main results give both local answers, concerning homomorphisms 
near the trivial representation, and global information on these jump loci. We approach the problem via flat connections 
and algebraic jump loci coming from the associated  covariant derivative.

\subsection{Relative jump loci}
\label{ss01}

We work over the field $\K =\RR$ or $\C$. We consider, up to homotopy,   a connected CW complex $X$,
and we want to analyze twisted Betti numbers of $X$ up to a fixed degree $q$ ($1\le q \le \infty$). We also fix a
homomorphism of $\K$--linear algebraic groups, $\iota : \BB \to \GL (V)$, with derivative denoted
$\theta : \bb \to \gl (V)$. Setting $G= \pi_1 (X)$, let $\cH (G, \BB)$ be the set of group homomorphisms,
$\Hom_{\gr} (G, \BB)$. We view $\rho \in \cH (G, \BB)$ as a rank $\ell$ local system on $X$ ($\ell = \dim_{\K} V$)
factoring through $\iota$, denoted ${}_{\iota \rho}V$. When $\iota$ is an inclusion, this allows us to treat simultaneously
various types of local systems (general, volume--preserving, unitary, unipotent...). Following \cite{DPS}, we
are interested in the {\em relative characteristic varieties} $\V^i_r (X, \iota)$, defined for $i,r \ge 0$ by
\begin{equation}
\label{eq=char0}
\V^i_r (X, \iota)=\{\rho \in \cH (G, \BB) \mid 
\dim_{\K} H^i(X, {}_{\iota\rho}V)\ge r\} \, .
\end{equation}

When $X$ has finite $q$--skeleton $X^{(q)}$ (where $X^{(\infty)}=X$), it turns out that both 
$\V^i_0 (X, \iota)= \cH (G, \BB)$ and $\V^i_r (X, \iota)$ are affine varieties, for all $i\le q$ and $r$. 
Their analytic germs at the trivial representation $1\in \cH (G, \BB)$ will be denoted by the subscript
$(\cdot)_{(1)}$. 

Our goal is to replace $X$ by a commutative differential graded algebra ($\sCDGA$) $\A^{\bullet}$. The
algebraic analog of $\cH (G, \BB)$ is the set of {\em flat connections}, 
\[
\F (\A, \bb)= \{ \omega \in \A^1 \otimes \bb \mid d\omega + \frac{1}{2} [\omega, \omega]=0 \} \, ,
\]
whose construction is given in detail in Section \ref{sec:prel}. When $\omega \in \A^1 \otimes \bb$ is flat,
the {\em covariant derivative} $d_{\omega}= d+ \ad_{\omega}$ (depending on $\theta$) satisfies $d_{\omega}^2=0$, giving
thus rise to the {\em Aomoto cochain complex}, 
\[
(\A^{\bullet} \otimes V, d_{\omega})\, ;
\] 
we refer to Section \ref{sec:prel} for full details. The analog of \eqref{eq=char0} is provided by the
{\em relative resonance varieties} $\R^i_r (\A, \theta)$, defined for $i,r \ge 0$ by
\begin{equation}
\label{eq=res0}
\R^i_r (\A, \theta)=\{\omega \in \F (\A, \bb) \mid 
\dim_{\K} H^i(\A^{\bullet} \otimes V, d_{\omega}) \ge r\} \, .
\end{equation}

This definition extends the setup from \cite{DPS}, where only $\sCDGA$'s with trivial differential ($d=0$)
were considered. When $\A^{\bullet}$ is connected (i.e., $\A^0= \K \cdot 1$) and
$\dim_{\K} \A^{\le q}< \infty$ (where $\A^{\le q}$ denotes $\oplus_{i\le q} \A^i$),  it turns out that both 
$\R^i_0 (\A, \theta)= \F (\A, \bb)$ and $\R^i_r (\A, \theta)$ are affine varieties, for all $i\le q$ and $r$. 
Their analytic germs at the trivial connection $0\in  \F (\A, \bb)$ will be denoted by the subscript
$(\cdot)_{(0)}$. 

\subsection{Rational homotopy theory}
\label{ss02}

The space $X$ and the $\sCDGA$  $\A^{\bullet}$ may be related via {\em homotopical algebra}, in the sense of
D. Sullivan \cite{S}. The starting point is the notion of {\em $q$--equivalence}: a $\sCDGA$ map inducing in
cohomology an isomorphism up to degree $q$, and a monomorphism in degree $q+1$. We say that two $\sCDGA$'s
have the {\em same $q$--type} (notation: $\A \simeq_q \B$) if they can be connected by a zigzag of  $q$--equivalences.
In particular, $\A^{\bullet}$ is {\em $q$--formal} (in the sense from \cite{Mac} and \cite{DP}) if
$(\A^{\bullet}, d) \simeq_q (H^{\bullet} \A, d=0)$. For $q=\infty$, we recover Sullivan's celebrated notion of {\em formality}.

For a space $X$, we denote by $\Omega^{\bullet}(X, \K)$ Sullivan's $\sCDGA$ of piecewise $C^{\infty}$ $\K$--forms on $X$,
with cohomology algebra $H^{\bullet}(X, \K)$, the untwisted singular cohomology ring. When $X$ is path--connected, it is known that
$\Omega^{\bullet}(X, \K) \simeq_q \A^{\bullet}$ if and only if $X$ and $\A^{\bullet}$ have the same {\em $q$--minimal model}.
In particular, we may speak about $q$--formal spaces, and groups (by considering $X=K(G, 1)$).

\subsection{Local results}
\label{ss03}

We aim at a simultaneous extension of \cite{GM} and \cite{DPS}. In \cite{GM}, Goldman and Millson analyze analytic germs 
of representation varieties, via the deformation theory of flat connections on differentiable manifolds. We will extend this (at $1$) to
topological spaces, taking also into account the characteristic varieties \eqref{eq=char0}. In \cite{DPS}, analytic isomorphisms
$\V^1_r (X, \iota)_{(1)} \cong \R^1_r (\A, \theta)_{(0)}$ are obtained for all $r\ge 0$, assuming $X$ to be $1$--formal with
$X^{(1)}$ finite and taking $\A^{\bullet}= (H^{\bullet}(X, \C), d=0)$. We will extend this beyond formality, and in arbitrary degree.

The result below is proved in Theorem \ref{thm:gralkm}. In the particular case when the group $G$ is $1$--formal, it was obtained 
by Kapovich and Millson in \cite{KM}, based on \cite{GM}. 

\begin{thm}
\label{thm:km0}
Let $G$ be a finitely generated group and $\BB$ be a $\K$--linear algebraic group. Then 
the analytic germ $\cH (G, \BB)_{(1)}$ depends only on the $1$--minimal model of $G$ and the Lie algebra of $\BB$.
\end{thm}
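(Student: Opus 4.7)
The plan is, in the spirit of Goldman--Millson and Kapovich--Millson, to identify $\cH(G,\BB)_{(1)}$ with a germ built solely from the 1-minimal model of $G$ and $\bb$, by comparing the two deformation functors they pro-represent on Artin local $\K$-algebras, and then to upgrade the resulting formal comparison to an analytic one.

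First I would reformulate the germ as a functor on $\sArt_\K$. For $A \in \sArt_\K$ with maximal ideal $\m_A$, set $\Phi_G(A) = \{\rho \in \Hom_{\gr}(G,\BB(A)) \colon \rho \equiv 1 \bmod \m_A\}$. This functor is pro-represented by the completed analytic local ring of $\cH(G,\BB)$ at $1$, so the analytic germ $\cH(G,\BB)_{(1)}$ is determined (up to Artin approximation) by $\Phi_G$. Every $\rho \in \Phi_G(A)$ factors through the kernel $\BB_+(A) = \ker(\BB(A)\to \BB(\K))$, a nilpotent $\K$-group canonically identified, via the exponential of $\BB$ and Baker--Campbell--Hausdorff, with $\bb \otimes \m_A$ equipped with the truncated BCH product.

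The second step invokes Malcev theory. Since $\BB_+(A)$ is nilpotent and uniquely divisible, each such $\rho$ factors canonically through the pronilpotent Malcev completion of $G$, yielding a natural bijection between $\Phi_G(A)$ and the set of continuous $\K$-Lie algebra morphisms from the Malcev Lie algebra of $G$ into $\bb \otimes \m_A$. Sullivan's correspondence then identifies the Malcev Lie algebra of $G$ with the pronilpotent $\K$-Lie algebra extracted from the 1-minimal model $\M_1(G) = (\Lambda V,d)$, $V$ concentrated in degree one, namely the completed free Lie algebra on the continuous dual of $V$ modulo relations read off from the transpose of the quadratic part of $d$. Consequently the right-hand functor depends only on $\M_1(G)$ and $\bb$, which induces an isomorphism of the pro-representing complete local rings depending only on these data.

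It remains to promote this formal identification to an analytic isomorphism of germs. Both $\cH(G,\BB)_{(1)}$ and the model germ are analytic germs of affine $\K$-varieties at distinguished $\K$-points cut out by algebraic equations; Artin's approximation theorem (together with its real analytic analogue when $\K = \RR$) then upgrades the formal isomorphism to an analytic one. The main obstacle will be the second step: making the factorization through the Malcev completion functorial in $A$ and compatible with the scheme structure of $\BB$, given that $G$ is only assumed finitely generated, so the Malcev completion is a genuine inverse limit of finite dimensional unipotent groups and the relevant $\Hom$ functors must be interpreted as the corresponding filtered limits of sets. A subsidiary delicacy is setting up Sullivan's correspondence in a form manifestly compatible with the Artin-local functor $\Phi_G$, so that the isomorphism at the pro-representing level is canonical in both $G$ and $\BB$.
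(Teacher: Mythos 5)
Your proposal is correct and takes essentially the same route as the paper's proof (Theorem \ref{thm:gralkm}): reduce to the functor of Artin rings, identify $\Hom_{\gr}(G,\eexp(\bb\otimes\m_A))$ with data depending only on the $1$--minimal model and $\bb$ via the $\K$--unipotent (Malcev) completion and Sullivan's correspondence, and conclude by Artin approximation; the paper merely realizes this correspondence concretely through monodromy representations of flat connections (Corollary \ref{cor:kunip}, Proposition \ref{prop:fbij}) because that machinery is needed later for the characteristic varieties. One caveat that does not affect your argument: the Malcev Lie algebra is the filtered dual of the degree--one generators of $\M_1(G)$ with bracket the negative transpose of $d$ (equivalently, $\lim\limits_{\leftarrow}E^n$ in the notation of \S\ref{ss21}), not a completed free Lie algebra on that dual modulo relations read off from the quadratic part of $d$ --- such a quadratic presentation exists only for $1$--formal groups --- but all you need is that the Malcev Lie algebra is functorially determined by the $1$--minimal model, which is true.
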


Our second main local result offers a concrete description of $\cH (G, \BB)_{(1)}$, compatible with the jump loci
\eqref{eq=char0} and \eqref{eq=res0}.

\begin{thm}
\label{thm:8main}
Let  $\iota : \BB \to \GL (V)$ be a rational representation of $\K$--linear algebraic groups ($\K =\RR$ or $\C$), with
tangent Lie representation $\theta : \bb \to \gl (V)$. Let $X$ be a connected CW complex with finite $q$--skeleton 
($1\le q\le \infty$), up to homotopy, and let $\A^{\bullet}$ be a connected $\K$--$\sCDGA$ with $\dim_{\K} \A^{\le q} <\infty$.
If $\Omega^{\bullet}(X, \K) \simeq_q \A^{\bullet}$, the following hold.
\begin{enumerate}

\item \label{8m1}
There is an isomorphism of reduced analytic germs, 
$e: \F (\A, \bb)_{(0)} \stackrel{\simeq}{\rightarrow} \cH (\pi_1(X), \BB)_{(1)} $,
inducing a local analytic isomorphism, $e: \R^i_r (\A, \theta)_{(0)} \stackrel{\simeq}{\rightarrow} \V^i_r (X, \iota)_{(1)}$,
for all $i\le q$ and $r\ge 0$. 

\item \label{8m2}
When $\BB$ is abelian, there is a natural identification, $\F (\A, \bb) \cong \Hom_{\gr} (\pi_1(X), \bb)$, and $e$ is induced by
the analytic map $\eexp_{\BB} : \Hom_{\gr} (\pi_1(X), \bb) \rightarrow \Hom_{\gr} (\pi_1(X), \BB)$, where
$\eexp_{\BB} : \bb \to \BB$ is the exponential map.

\end{enumerate}
\end{thm}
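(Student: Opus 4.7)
The plan is to bridge the topological germ $\cH(\pi_1(X), \BB)_{(1)}$ and the algebraic germ $\F(\A, \bb)_{(0)}$ through the common $q$-minimal model $\M^{\bullet}$ shared by $X$ and $\A^{\bullet}$, combined with a Goldman--Millson--style deformation argument applied to the DGLA $\A \otimes \bb$. The target germ already depends only on $(\M_1, \bb)$ by Theorem~\ref{thm:km0}, so the bulk of the work is to show that the source germ, together with its resonance stratification up to degree $q$, is a $q$-minimal model invariant, and then to write down $e$ explicitly in the case $\A = \Omega^\bullet(X, \K)$.

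First I would establish the invariance. Given a $q$-equivalence $f: \M \to \A$, the pullback $f^*: \F(\A, \bb) \to \F(\M, \bb)$ is a morphism of affine schemes; I claim it is an isomorphism of reduced analytic germs at $0$. The tangent space $T_0 \F(\A, \bb)$ is $Z^1(\A) \otimes \bb$, with quadratic obstruction $\frac{1}{2}[\omega, \omega]$ controlled by the DGLA cohomology of $\A \otimes \bb$. Since $f$ is a $1$-equivalence, it induces an isomorphism on $H^1$ and a monomorphism on $H^2$, so on formal (hence analytic) germs the extra directions in $\F(\A, \bb)$ over $\F(\M, \bb)$ correspond to exact cochains that disappear after passing to the reduced germ. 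The same mechanism applies to the Aomoto complex: $(\A^{\le q+1} \otimes V, d_\omega)$ is a $q$-quasi-isomorphic DG module to $(\M^{\le q+1} \otimes V, d_{f^*\omega})$, so jumps in $\dim_{\K} H^i$ for $i \le q$ are preserved under $f^*$. Traversing the zigzag $\A \leftarrow \M \to \Omega^\bullet(X, \K)$ then reduces the construction of $e$ to $\A = \Omega^\bullet(X, \K)$.

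Second, when $\A = \Omega^\bullet(X, \K)$, I would construct $e$ via Chen-style parallel transport: for $\omega$ a flat $\bb$-valued $1$-form sufficiently close to $0$, the iterated-integral holonomy $\mathrm{hol}_\omega \in \Hom_{\gr}(\pi_1(X), \BB)$ depends analytically on $\omega$ and is independent of the path in each homotopy class, by flatness. Set $e(\omega) := \mathrm{hol}_\omega$. To match jump loci, invoke the twisted de Rham theorem: $H^\bullet(X, {}_{\iota \mathrm{hol}_\omega}V) \cong H^\bullet(\Omega^\bullet(X, \K) \otimes V, d_\omega)$, so the defining conditions in \eqref{eq=char0} and \eqref{eq=res0} coincide. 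That $e$ is a reduced-germ isomorphism follows by identifying both sides, through the Sullivan/Malcev dictionary relating $\M_1$ to the Malcev Lie algebra of $\pi_1(X)$, with the single deformation functor $R \mapsto \mathrm{MC}(\M_1 \otimes \bb \otimes \m_R)$ on Artin local $\K$-algebras; this is exactly the functor whose pro-representability and invariance under $1$-minimal model was used in Theorem~\ref{thm:km0}.

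Third, for part~\eqref{8m2}: when $\BB$ is abelian, $[\omega, \omega] = 0$, so flatness reduces to $d\omega = 0$ and $\F(\A, \bb) = Z^1(\A) \otimes \bb$. Connectedness of $\A$ gives $Z^1(\A) = H^1(\A)$ after passage to the minimal model (where the degree-$1$ generators are closed), and $1$-equivalence yields $H^1(\A) \cong H^1(X, \K) \cong \Hom_{\K}(H_1(X, \K), \K)$. Dualizing the Hurewicz map and tensoring with $\bb$ identifies $\F(\A, \bb)$ with $\Hom_{\gr}(\pi_1(X), \bb)$, and Chen's formula collapses to $e(\omega)(\gamma) = \eexp_{\BB}\!\bigl(\int_\gamma \omega\bigr)$, so $e$ is the post-composition with $\eexp_{\BB}$ as claimed.

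The main obstacle is the step that $q$-equivalence of CDGAs induces isomorphism of \emph{un-quotiented} reduced germs of flat connections, together with compatible Aomoto jump loci: classical Goldman--Millson theory only guarantees invariance of gauge-equivalence classes, so one has to use the pointed nature of the representation variety (trivial representation fixed, no conjugation quotient) together with the fact that the reduced analytic germ absorbs the infinitesimal exact/gauge directions. Once this is verified, the construction of $e$ and its compatibility with the jump stratification follow by transporting the holonomy construction through the minimal-model zigzag.
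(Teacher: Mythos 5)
Your overall architecture (common $q$-minimal model, Goldman--Millson deformation theory for the DGLA $\A\otimes\bb$, Sullivan/Malcev dictionary, twisted de Rham) matches the paper's, and your part~\eqref{8m2} is essentially the paper's argument. But there are two genuine gaps in part~\eqref{8m1}.

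First, the claim that a $q$-equivalence $f\colon\M\to\A$ makes the Aomoto complexes $(\A\otimes V,d_\omega)$ and $(\M\otimes V,d_{\omega'})$ $q$-quasi-isomorphic, so that ``jumps in $\dim_\K H^i$ are preserved,'' is false for individual $\K$-points $\omega\ne 0$ of $\F(\A,\bb)$, however close to $0$. The spectral-sequence comparison only works for flat connections with values in $\bb\otimes\m_A$ with $A$ Artinian local (nilpotence of $\m_A$ is what makes the filtration finite); the paper explicitly warns that the statement fails for arbitrary flat connections, and indeed the theorem only asserts an isomorphism of \emph{germs} of jump loci, not pointwise equality of Betti numbers (that stronger statement is Theorem~\ref{thm:8gl} and requires positive weights). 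To compare the jump loci one must work formally: build the universal cochain complexes $C^{\bullet}(X,\iota)$ over $P$ and $C^{\bullet}(\A,\theta)$ over $\oP$, pass to quotients $S=\wR/\fp$ by prime ideals, recover $H^{\le q}(C\otimes S)$ as an inverse limit over the Artinian truncations $S/\m_S^n$ (this needs a Mittag--Leffler argument and the $q$-finiteness hypotheses), and conclude that the radicals of the completed defining ideals $\cI^i_r$ and $\overline{\cI}^i_r$ correspond under $\wR\stackrel{\sim}{\leftrightarrow}\woR$. None of this is in your proposal, and it is the technical heart of the proof.

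Second, the identification of the two deformation functors only gives an isomorphism of \emph{completions} $\wR\cong\woR$; it is not induced by any analytic map, because transporting through the zigzag requires inverting the Deligne--Schlessinger--Stasheff bijection $\F_{\loc}(\cN\otimes\bb\otimes A)\to\F_{\loc}(\Omega(X,\K)\otimes\bb\otimes A)$, which is only a natural bijection of sets of Artin points (and $\Omega^{\bullet}(X,\K)$ is infinite-dimensional, so ``$e(\omega):=\mathrm{hol}_\omega$'' is not a map of finite-dimensional germs). To produce the actual reduced analytic isomorphism $e$ of part~\eqref{8m1}, one must invoke M.~Artin's approximation theorem, solving simultaneously for a convergent map that carries each of the finitely many ideals $\sqrt{\oR\cdot\overline{\cI}^i_r}$ to $\sqrt{R\cdot\cI^i_r}$; your proposal never addresses this formal-to-analytic passage. (In the abelian case the exponential is already a global analytic map, so faithful flatness of $R\to\wR$ suffices, as you implicitly use.) A smaller point: the resolution of the gauge issue is not that ``the reduced germ absorbs the gauge directions,'' but that for \emph{connected} CDGAs the degree-zero part of the augmentation ideal vanishes, so the gauge group acting in the Deligne--Schlessinger--Stasheff theorem is trivial and the bijection holds on flat connections themselves.
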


\subsection{Examples}
\label{ss04}
(cf. Section \ref{sec:ex} and \S \ref{ss84})

Theorem \ref{thm:8main} applies in particular to a connected CW complex $X$ with $X^{(q)}$ finite (up to homotopy)
that is $q$--formal (over $\K$), by taking $\A^{\bullet}= (H^{\bullet}(X, \K), d=0)$; the case $q=1$ was treated in
\cite[Theorem A]{DPS}.

Another particular case of considerable interest covered by Theorem \ref{thm:8main} arises when $X$ is a 
connected, smooth, quasi--projective complex variety (for short, a {\em quasi--projective manifold}). Here, $q=\infty$
and we may take $\A^{\bullet}$ to be the {\em Gysin model} constructed by Morgan in \cite{M}, by using a compactification of $X$.
In the {\em rank one} case, when $\BB= \GL_1 (\C)$ and $\iota =\id_{\C^{\times}}$, the characteristic varieties
(or, Green--Lazarsfeld sets of a space $X$) are usually denoted $\V^i_r (X)$; they were introduced for projective manifolds by 
Green--Lazarsfeld in \cite{GL}, and reinterpreted topologically by Beauville in \cite{Be}. Green--Lazarsfeld sets of
quasi--projective manifolds received a lot of attention, over the years; see e.g. \cite{DPS} and the references therefrom.
Our result from Theorem \ref{thm:8main}, in the quasi--projective case, may be viewed as a substantial nonabelian extension
of previously known facts about Green--Lazarsfeld sets.

Nonabelian cohomology jump loci of an arbitrary finitely generated nilpotent group $G$ may also be handled by Theorem \ref{thm:8main}.
Here, $q=\infty$ and $\A^{\bullet}$ is the $1$--minimal model of $G$; see Corollary \ref{cor:nilgerms}. Another interesting class
of polycyclic groups to which the approach from Theorem \ref{thm:8main} applies consists of the fundamental groups of
Hattori's solvmanifolds from \cite{Hat}, where $q=\infty$ and $\A^{\bullet}$ is the $\sCDGA$ of cochains on a certain solvable 
Lie algebra; see Corollary \ref{cor:solvgerms}.

To the best of our knowledge, only sporadic computations of twisted cohomology for polycyclic groups exist in the literature.
For example, unipotent representations of finitely generated, torsion--free nilpotent groups were studied in \cite{P};
a similar analysis was recently carried out in \cite{Kas}, for lattices in $1$--connected solvable Lie groups;
$\SL_2$--jump loci for a family of nilmanifolds were described in \cite{Al}; the finiteness of characteristic varieties, for
the solvmanifolds we consider in Corollary \ref{cor:solvgerms}, was established in the rank one case in \cite{Mil}; for
arbitrary finitely generated nilpotent groups, the triviality of rank one characteristic varieties was obtained in \cite{MaP}.

\subsection{Positive weights}
\label{ss05}

In our next three results, we illustrate the fact that Theorem \ref{thm:8main} sheds a new light even in abelian cases.
The presence of positive weights (in the sense from Definition \ref{def:posw}) will be a unifying theme.

The notion of {\em positive--weight decomposition} for a $\Q$--$\sCDGA$ $(\A^{\bullet}, d)$ goes back 
to Body--Sullivan \cite{BS} and Morgan \cite{M}. This restrictive property means the existence of a 
grading (required to be positive in degree $1$), called weight, preserved by both multiplication and differential. 

When $\Omega^{\bullet}(X, \K) \simeq_1 \A^{\bullet}$, the weight decomposition induces a grading, 
$H^1(X,\K)=\oplus_{j \in J}H^1_j$, on the first cohomology group. 
We say that a subspace $E \subseteq H^1(X,\K)$ is {\it weighted homogeneous}
with respect to this grading if $E=\oplus_{j \in J}(E\cap H^1_j).$

A rich source of examples is provided by $\sCDGA$'s of the form $\A^{\bullet}= (H^{\bullet}(X, \K), d=0)$, with
weight equal to degree. Gysin models of quasi--projective manifolds also have positive weights, as we recall in
Example \ref{ex:qproj}. The corresponding grading, $H^1(X,\Q)=H_1^1 \oplus H_2^1$, has the property that 
$H_1^1=W_1H^1(X,\Q)$, i.e., it is a rational splitting of the weight filtration of the Deligne MHS on $H^1(X,\Q)$, 
see $(4)$ in \cite{Dcompo}.

Our viewpoint in the next three (global) results is that the existence of positive weights 
is a good substitute, in a purely topological context, for the Deligne mixed Hodge structure on the cohomology of algebraic varieties.
(Needless to say, the positive--weight decomposition contains weaker information than the MHS.) 
See also Hain and Matsumoto \cite{HM}.

\subsection{Global results}
\label{ss06}

Our first global result clarifies the qualitative structure of rank one {\em non--translated} components (i.e., irreducible
components passing through the origin), for both characteristic and resonance varieties, in the presence of positive weights.

\begin{thm}
\label{thm:tori}
In Theorem \ref{thm:8main}, suppose moreover that $\iota =\id_{\C^{\times}}$, the $\sCDGA$ $\A$ is defined over $\Q$
and has positive weights, and the isomorphism induced by the zigzag of $q$--equivalences, 
$H^1(X, \C)  \stackrel{\sim}{\leftrightarrow} H^1 \A$, preserves $\Q$--structures. Then:
\begin{enumerate}

\item \label{8t1}
For all $i\le q$ and $r\ge 0$, $\R^i_r (\A, \theta)$ is a finite union of linear subspaces of $\F (\A, \C)= H^1 \A$, that are
defined over $\Q$ and weighted homogeneous.

\item \label{8t2}
For all $i\le q$ and $r\ge 0$, every irreducible component of $\V^i_r (X)$ passing through $1$ is an affine subtorus
of the character torus $\T (\pi_1(X))=\cH  (\pi_1(X), \C^{\times})$.

\end{enumerate}
\end{thm}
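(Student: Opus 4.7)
The plan is to prove (1) directly using the positive weight structure, and then to derive (2) from (1) by transporting the algebraic description to the topological side via the exponential map supplied by Theorem \ref{thm:8main}.

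For (1), the positive-weight grading $\A^\bullet = \bigoplus_k \A_k^\bullet$ produces, for each $t \in \C^\times$, a $\sCDGA$-automorphism $\phi_t$ of $\A$ defined by $\phi_t(a) = t^k a$ for $a \in \A_k$; since $\phi_t$ commutes with $d$ and with the product, it carries the Aomoto complex $(\A, d_\omega)$ isomorphically to $(\A, d_{\phi_t(\omega)})$. Hence the induced algebraic $\C^\times$-action on $\F(\A, \C) = H^1\A$, which has only positive weights because the weight is positive in degree $1$, preserves each $\R^i_r(\A, \theta)$; in particular the origin lies in $\R^i_r$ and the variety is a weighted cone. Because $\A$ is defined over $\Q$ and the Aomoto differentials depend linearly on $\omega$ with $\Q$-coefficients, the rank-deficiency conditions cut out $\R^i_r(\A, \theta)$ as a $\Q$-subvariety of $H^1\A$. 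Combining the linearity of $\omega \mapsto d_\omega$ with the $\C^\times$-invariance and the $\Q$-structure, one concludes that each irreducible component is a weighted-homogeneous $\Q$-rational linear subspace. This linearity step is the main obstacle: $\C^\times$-invariance alone only forces $\R^i_r$ to be a weighted cone (which need not be a union of linear subspaces, as cuspidal examples like $\{y^2 = x^3\}$ show), so the linear dependence of $d_\omega$ on $\omega$ must be exploited in a decisive way together with the positivity of the weights.

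For (2), let $W$ be an irreducible component of $\V^i_r(X)$ containing the trivial representation $1$. Any irreducible analytic branch $B$ of $W_{(1)}$ corresponds, via the isomorphism $e \colon \R^i_r(\A,\theta)_{(0)} \isom \V^i_r(X)_{(1)}$ of Theorem \ref{thm:8main}(1), to the germ at $0$ of some $\Q$-rational linear subspace $L \subseteq H^1\A \cong H^1(X,\C)$ furnished by (1). By Theorem \ref{thm:8main}(2), the map $e$ is induced by $\eexp_{\C^\times}$, so $B$ coincides with the germ at $1$ of the closed connected subtorus $T := \eexp(L) \subseteq \T(\pi_1(X))$; the $\Q$-rationality of $L$ guarantees that $T$ is algebraic. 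Since $T$ is irreducible and its germ at $1$ lies inside $W_{(1)} \subseteq W$, global irreducibility of $T$ forces $T \subseteq W$; comparing dimensions ($\dim T = \dim B = \dim W$) and using the irreducibility of $W$ then forces $W = T$, which is the desired affine subtorus.
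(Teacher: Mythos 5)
Your Part \eqref{8t1} has a genuine gap, and you have in fact located it yourself: nothing in your argument forces the weighted cone $\R^i_r (\A, \theta)$ to be a union of \emph{linear} subspaces. The $\C^{\times}$--action coming from the positive weights only makes $\R^i_r (\A, \theta)$ a weighted-homogeneous $\Q$--subvariety, and the linear (affine) dependence of $d_{\omega}=d+\omega\,\cdot$ on $\omega$ only makes it a determinantal locus of a linear matrix pencil --- such loci are emphatically not unions of linear subspaces in general (already a single determinant of a generic pencil is an irreducible hypersurface of higher degree). There is no purely algebraic route here: the linearity is \emph{not} a formal consequence of positive weights plus $\Q$--rationality, and the paper obtains it only by passing through the topological side. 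Concretely, the paper first proves (Lemma \ref{lem:ratl}) that for \emph{any} Zariski-closed subset $\V$ of the character torus $\T(G)$, the weighted exponential tangent cone $WETC_1(\V)=\{z \mid \eexp(t\cdot z)\in\V\ \forall t\}$ is a finite union of rational linear subspaces; the proof of that lemma is where the real work happens, resting on E.~Borel's classical theorem on linear independence of exponentials of entire functions. Then Theorem \ref{thm:8main}\eqref{8m2} (germwise, near $0$ and $1$) together with the $\C^{\times}$--invariance of $\R^i_r(\A)$ yields the \emph{global} identity $\R^i_r(\A)=WETC_1(\V^i_r(X))$, and Part \eqref{8t1} falls out. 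So the logical order is the reverse of the one you propose: \eqref{8t1} is deduced from the structure of the characteristic variety in the torus, not proved first on the algebraic side.

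Your Part \eqref{8t2}, taken on its own with \eqref{8t1} granted, is essentially the paper's argument: $\eexp(\R^i_r(\A))$ is then a finite union of connected affine subtori agreeing with $\V^i_r(X)$ near $1$, and the identity-theorem/irreducibility bookkeeping you sketch (each subtorus $T$ with germ inside $W$ satisfies $T\subseteq\V^i_r(X)$ globally, and conversely $W$ lies in the finite union of these subtori, forcing $W=T$) is correct. One further small caution for \eqref{8t1}: the $\Q$--rationality of the individual components is also extracted from Lemma \ref{lem:ratl} (the subspaces $T(p)$ there are visibly rational), not merely from the differential being defined over $\Q$, since a $\Q$--variety can have irrational irreducible components that are only permuted by $\mathrm{Gal}(\overline{\Q}/\Q)$.
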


Theorem \ref{thm:tori} works in particular for a connected CW complex $X$ with finite $q$--skeleton (up to homotopy)
that is $q$--formal (over $\Q$). As explained in \S \ref{ss86}, we recover for $q=1$ Theorem B from \cite{DPS}. At the
same time, Theorem \ref{thm:tori} \eqref{8t2} provides a substantial generalization (for non--translated components) of
a basic result due to Arapura, who proved in \cite{A} that $\V^1_1 (X)$ is a finite union of (possibly translated) subtori of
$\T (\pi_1(X))$, when $X$ is a quasi--projective manifold. He has obtained a similar result for 
$\V^i_r (X)$ with $i>1$ under the additional assumption that $H^1(X,\Q)$ is a pure Hodge structure.
In the recent preprint \cite{BW}, Budur and Wang use Theorem \ref{thm:tori} \eqref{8t2} as a key ingredient to prove that
all rank one characteristic varieties of a quasi--projective manifold are finite unions of torsion--translated subtori.
Note that the strong conclusions of Theorem \ref{thm:tori} may not hold without our positive weight hypothesis;
see Example \ref{ex:counterc}.

In the case when $X$ is a quasi--projective manifold, we get the following consequence of Theorem \ref{thm:tori}. 
The proof of this corollary follows the ideas in Deligne's Lemma, stated as  Lemma 2 in \S 3 from Voisin's paper \cite{V}.

\begin{corollary}
\label{cor:subMHS}
Let $X$ be a connected smooth quasi--projective variety. Then, for any $i \geq 1$ and  $r\geq 1$, every irreducible component $E$ 
of the resonance variety $\R^i_r (\A, \theta)$, where $\A$ is a Gysin model of $X$, is a (rational) mixed Hodge substructure in $H^1(X)=H^1(\A)$.

\end{corollary}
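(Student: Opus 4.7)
The plan is to derive the corollary by combining Theorem \ref{thm:tori} with the Hodge-theoretic argument codified by Deligne and reproduced in Voisin's Lemma 2, \S3 of \cite{V}. The approach proceeds in three steps: extract linear/rational structure on $E$, match it with the weight filtration, then match it with the Hodge filtration.

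\textbf{Step 1 (structure of $E$).} Since the Gysin model $\A$ of a connected smooth quasi-projective variety $X$ is a $\Q$-$\sCDGA$ with positive weights (Example \ref{ex:qproj}) whose comparison map induces a $\Q$-rational isomorphism $H^1(\A) \cong H^1(X,\C)$, Theorem \ref{thm:tori}(\ref{8t1}) applies with $q=\infty$. It tells us that $E$ is a $\Q$-defined, weighted-homogeneous linear subspace of $H^1(\A)\otimes \C = H^1(X,\C)$.

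\textbf{Step 2 ($W_{\bullet}$-compatibility).} By Example \ref{ex:qproj}, the weight-$1$ summand of the positive-weight grading equals $W_1 H^1(X,\Q)$, so the grading $H^1 = H^1_1 \oplus H^1_2$ is a $\Q$-splitting of the Deligne weight filtration. Weighted homogeneity of $E_\Q$ thus gives $W_k E_\Q = W_k H^1(X,\Q) \cap E_\Q$ for all $k$, which is precisely compatibility with $W_\bullet$.

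\textbf{Step 3 ($F^{\bullet}$-compatibility, via Deligne's Lemma).} For $F^{\bullet}$-compatibility of $E_\C$, I would first show that the ambient variety $\R^i_r(\A,\theta) \subset H^1(\A)\otimes \C$ is stable under the Deligne torus action $\bS(\C)$ on $H^1(X,\C)$. Since Morgan's Gysin model carries a MHS compatible with its $\sCDGA$ structure, the action of $\bS(\C)$ extends to $\A \otimes \C$ by $\sCDGA$ automorphisms, so conjugation by $s \in \bS(\C)$ sends $d_\omega$ to $d_{s \cdot \omega}$. This yields an isomorphism of Aomoto complexes, hence $\dim H^i(\A \otimes \C, d_\omega)$ is constant on each $\bS$-orbit, and $\R^i_r(\A,\theta)$ is $\bS$-stable. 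Deligne's Lemma (Lemma 2 of \S3 in \cite{V}) then implies that $E$, being a $\Q$-irreducible component of an $\bS$-stable $\Q$-algebraic subvariety, is a sub-MHS of $H^1(X)$.

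The principal obstacle is verifying the $\bS$-invariance of $\R^i_r(\A,\theta)$ in Step 3, which ultimately reduces to the functoriality of the Aomoto complex under $\sCDGA$ automorphisms of $\A$ combined with the fact that the MHS on $H^{\bullet}(X)$ is realized cochain-level by Morgan's construction. Once this invariance is in place, Deligne's Lemma performs the Hodge-theoretic bookkeeping that upgrades the $\Q$-rational, weight-compatible subspace $E$ to a sub-MHS.
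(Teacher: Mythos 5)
Your proof is correct and follows essentially the same route as the paper's: both reduce the sub-MHS claim to invariance of $\R^i_r (\A, \theta)$ under a torus action realized by $\sCDGA$ automorphisms of the Gysin model (using that the cup product and the Gysin differential preserve Hodge types, the Tate twist being essential for the latter), and then invoke Deligne's Lemma as in Voisin's paper. The only cosmetic difference is that you use the full Deligne torus $\bS(\C)$, whereas the paper, exploiting the splitting of the MHS on $H^1(X)$, works with the linear $\C^{\times}$--action $z\cdot u=z^p\bar{z}^q u$.
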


In particular, the associated irreducible component $W=\exp(E)$ of the characteristic variety  $\V^i_r (X)$ is
of exponential Hodge type in the terminology of Arapura \cite{A}.  It follows that for any such irreducible component $W$ there is 
a holomorphic map $f_W:X \to T_W$, where $T_W$ is a complex torus, such that $W=f_W^*(H^1(T_W,\C^{\times}))$. More precisely, 
if $h^{1,0}=h^{0,1}$ and $h^{1,1}$ are the mixed Hodge numbers of the MHS $E$, then one has an extension
$$ 0 \to T_a \to T_W \to T_c \to 0 ,$$
where $T_a$ is an affine torus of dimension $h^{1,1}$ and $T_c$ is a complex compact torus of (complex) dimension $h^{1,0}$. 
In particular, $ \dim T_W =h^{1,0}+h^{1,1} \leq \dim W=b_1(T_W)$.
For details, see the end of \S II.1, and Corollary 1.2 and its proof in \S V of Arapura's paper \cite{A}.

Our approach to non--translated components of rank one
characteristic varieties for quasi--projective manifolds, via positive--weight decompositions of Gysin models, is in the same
spirit as a key result due to Morgan \cite{M}, where mixed Hodge structures are used to show that the Malcev Lie algebra
of the fundamental group is obtained by completion from a finitely presented Lie algebra having positive weights, on both
generators and relations.

In the rank one case, the resonance varieties $\R^i_r (\A, \theta)$ are usually denoted $\R^i_r (X)$, when 
$\A^{\bullet}= (H^{\bullet}(X, \K), d=0)$. They first appeared in work by Falk \cite{Fa} on complements of complex 
hyperplane arrangements, and have been intensively studied, by using a variety of techniques. As we recall in Example \ref{ex:arr},
an arrangement complement $X= X_{\bA}$ is formal over $\Q$, by results due to Brieskorn \cite{B}, Orlik and Solomon \cite{OS},
Orlik and Terao \cite{OT}. In particular, Theorem \ref{thm:tori} may be applied to $X_{\bA}$, with $q=\infty$, by replacing the 
cohomology ring of $X_{\bA}$ with the Orlik--Solomon algebra of the arrangement $\bA$, that depends only on the underlying 
combinatorics of $\bA$. 

Adopting this viewpoint, we may infer from Theorem \ref{thm:8main} that all analytic germs of characteristic varieties,
$\V^i_r (X_{\bA}, \iota)_{(1)} \cong \R^i_r ((H^{\bullet} (X_{\bA}, \K), d=0), \theta)_{(0)}$, are
combinatorially determined. This represents an extension from a broad nonabelian perspective of the main (local) result on  $X_{\bA}$
due to Esnault, Schechtman and Viehweg \cite {ESV} (corresponding to our Theorem \ref{thm:8main} \eqref{8m2}), where only 
the rank one case was treated and those due to Schechtman, Terao and Varchenko \cite {STV} where the abelian case is considered.

For an arrangement complement, Theorem \ref{thm:tori} \eqref{8t2} follows from \cite{A}, and then Theorem \ref{thm:tori} \eqref{8t1}
on $\R^i_r (X_{\bA})$ becomes a consequence of \cite{ESV}.

Our next result gives a global version of Theorem \ref{thm:8main} \eqref{8m2}: 
$\eexp_{\BB} (\R^i_r (\A, \theta)) \subseteq \V^i_r (X, \iota)$, for all $i\le q$ and $r\ge 0$, under a positive--weight assumption.
To formulate this more precisely, we define, for each $i\ge 0$, $b_i (X, \rho)= \dim_{\K} H^i(X, {}_{\iota\rho}V)$ for
$\rho \in \cH (\pi_1(X), \BB)$, and $\beta_i (\A, \omega)= \dim_{\K} H^i(\A^{\bullet} \otimes V, d_{\omega})$ for
$\omega \in \F (\A, \bb)$. 

\begin{thm}
\label{thm:8gl}
In Theorem \ref{thm:8main} \eqref{8m2}, identify $\F (\A, \bb)$ with $\Hom_{\gr} (\pi_1(X), \bb)$. Then the following hold,
for all $i\le q$ and $r\ge 0$.

\begin{enumerate}

\item \label{gl1} 
For $\K=\C$, $\eexp_{\BB} (\R^i_r (\A, \theta)_0) \subseteq \V^i_r (X, \iota)$, where $\R^i_r (\A, \theta)_0$ denotes the union 
of the irreducible components of $\R^i_r (\A, \theta)$ passing through the origin.

\item \label{gl2} 
If in Theorem \ref{thm:8main} \eqref{8m2} we also assume $\A^{\bullet}$ has positive weights, then 
$\eexp_{\BB} (\R^i_r (\A, \theta)) \subseteq \V^i_r (X, \iota)$. In other words
\[
\beta_i (\A, \omega) \le b_i (X, \eexp_{\BB} (\omega)) \, ,
\]
for every $\omega \in \F (\A, \bb)$ and all $i\le q$, over $\K=\RR$ or $\C$.

\end{enumerate}
\end{thm}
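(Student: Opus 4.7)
The plan is to leverage the local analytic statement of Theorem \ref{thm:8main}\eqref{8m2} together with a continuation argument along orbits of the weighted scaling action on $\F(\A,\bb)$.

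For part \eqref{gl1}, I work over $\K=\C$. By Theorem \ref{thm:8main}\eqref{8m2}, the exponential $\eexp_\BB$ is a globally defined holomorphic map $\Hom_{\gr}(\pi_1(X),\bb)\to\Hom_{\gr}(\pi_1(X),\BB)$ whose restriction to a Euclidean neighborhood of $0$ in $\R^i_r(\A,\theta)$ lands inside $\V^i_r(X,\iota)$. Equivalently, $Z:=\eexp_\BB^{-1}(\V^i_r(X,\iota))$ is a closed analytic subset of the affine space $\F(\A,\bb)$ containing the germ $\R^i_r(\A,\theta)_{(0)}$. Let $C$ be any irreducible component of $\R^i_r(\A,\theta)$ passing through $0$. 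Being an irreducible complex algebraic variety, $C$ is irreducible as a complex analytic space (its smooth locus is connected), and $C\cap Z$ is a closed analytic subset of $C$ with non-empty Euclidean interior at $0$. By the identity principle for irreducible analytic spaces, $C\cap Z=C$, so $\eexp_\BB(C)\subseteq\V^i_r(X,\iota)$. Taking the union over all such $C$ yields \eqref{gl1}.

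For part \eqref{gl2}, I plan to show that under the positive-weight hypothesis \emph{every} irreducible component of $\R^i_r(\A,\theta)$ passes through $0$, reducing the statement to \eqref{gl1}. The positive grading on $\A^\bullet$ furnishes a family of $\sCDGA$ automorphisms $\varphi_t$ ($t\in\K$) acting by $t^j$ in weight $j$, inducing the polynomial action $t\cdot\omega=\sum_j t^j\omega_j$ on $\F(\A,\bb)=H^1\A\otimes\bb$. A direct check shows $(\varphi_t\otimes\id_V)\circ d_\omega = d_{t\cdot\omega}\circ(\varphi_t\otimes\id_V)$ for $t\in\K^*$, so the Aomoto complexes at $\omega$ and at $t\cdot\omega$ are isomorphic and each $\R^i_r(\A,\theta)$ is preserved by the $\K^*$-action. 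Given an irreducible component $C$, pick $\omega$ in the Zariski-dense open subset of $C$ not meeting any other component. For $t$ in a Euclidean neighborhood of $1$, $t\cdot\omega$ lies in $\R^i_r(\A,\theta)$ and is close to $\omega$, hence lies in $C$. The preimage of $C$ under the polynomial curve $t\mapsto t\cdot\omega$ is a Zariski-closed subset of the affine line containing a Euclidean open set, so equals the whole line; specializing to $t=0$ and using positivity of all weights yields $0=0\cdot\omega\in C$.

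Combining the two steps, $\R^i_r(\A,\theta)=\R^i_r(\A,\theta)_0$, so part \eqref{gl1} gives $\eexp_\BB(\R^i_r(\A,\theta))\subseteq\V^i_r(X,\iota)$, which is the stated inequality $\beta_i(\A,\omega)\le b_i(X,\eexp_\BB(\omega))$. For $\K=\RR$, the same inequality follows by complexification: both Betti-type dimensions commute with the flat extension $\RR\subseteq\C$, and the positive-weight hypothesis passes to $\A\otimes_\RR\C$, so the complex case implies the real one. The main obstacle is the analytic continuation step in \eqref{gl1}: one has to use that the irreducible algebraic components of $\R^i_r(\A,\theta)$ remain irreducible as complex-analytic spaces, so the identity principle can spread the germ-level isomorphism of Theorem \ref{thm:8main} to the whole component. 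Once this is in hand, the positive-weight scaling argument in \eqref{gl2} is essentially formal.
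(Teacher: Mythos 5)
Your proof is correct, and part \eqref{gl1} is essentially the paper's own argument: spread the germ-level inclusion from Theorem \ref{thm:8main}\eqref{8m2} over each irreducible component through the origin by the identity principle (the paper phrases this via the vanishing of the analytic functions $f_j\circ\eexp_{\BB}$ rather than via the analytic set $Z$, but it is the same continuation). For part \eqref{gl2} you take a genuinely different route. The paper fixes $\omega$ and argues pointwise along the orbit $t\mapsto t\cdot\omega$: the twisted Betti number $b_i(X,\eexp_{\BB}(t\cdot\omega))$ equals its minimum $m$ off a discrete subset of $\K$, so for $t\neq 0$ near $0$ one gets $m\le b_i(X,\eexp_{\BB}(\omega))$, while the local isomorphism and the $\K^{\times}$--invariance of $\beta_i$ give $\beta_i(\A,\omega)=m$. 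You instead prove the structural statement that positive weights force \emph{every} irreducible component of $\R^i_r(\A,\theta)$ through the origin (via $\C^{\times}$--invariance of components and Zariski closure along the polynomial curve $t\mapsto t\cdot\omega$), and then quote part \eqref{gl1}. Your route buys a cleaner conceptual picture --- it makes explicit that $\R^i_r(\A,\theta)=\R^i_r(\A,\theta)_0$ under positive weights, which is exactly the phenomenon behind Theorem \ref{thm:tori}\eqref{8t1} --- but it costs you the extra complexification step for $\K=\RR$, since part \eqref{gl1} and the identity principle are only available over $\C$ (an irreducible real variety can have real points with nonempty interior components and isolated points, so the continuation fails over $\RR$). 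The paper's pointwise argument avoids this entirely because the discreteness of a proper analytic zero set in $\K$ holds for both $\RR$ and $\C$. Your complexification step is valid --- both $\beta_i$ and $b_i$ commute with the flat extension $\RR\subseteq\C$, and the weight grading extends --- so both proofs go through.
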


Note that Theorem \ref{thm:8gl} holds in particular when $X$
is a compact Kahler manifold and $\A^{\bullet}= (H^{\bullet}(X, \K), d=0)$, since $X$ is formal
(cf. Deligne, Griffiths, Morgan and Sullivan \cite{DGMS}). Another particular case occurs when $X$ is an arrangement complement, 
$\A^{\bullet}$ is the associated
Orlik--Solomon algebra, $q=\infty$ and $\iota =\id_{\C^{\times}}$. In this case, the global inequalities were obtained by
Libgober and Yuzvinsky in \cite{LY}. They may be extended to the case of an arbitrary quasi--projective manifold $X$, for
arbitrary abelian coefficients $\BB$ and $q=\infty$, replacing the Orlik--Solomon algebra by a Gysin model. We also have the following.

\begin{corollary}
\label{cor:compactK}
Suppose that $\iota =\id_{\C^{\times}}$, $X$ is a compact Kahler manifold and we take  $\A^{\bullet}=(H^{\bullet}(X), d=0)$. Then
$$
\beta_i (\A, \omega) = b_i (X, \eexp_{\BB} (\omega)) \, ,
$$
for every $\omega \in H^{1,0}(X) \cup  H^{0,1}(X)$ and all $i$.
Moreover, the spectral sequence with $E_1^{p,q}=H^q(X,\Omega^p_X)$ and differential $d_1$ induced by 
the cup product with $\omega \in H^{1,0}(X)$, converging to the hypercohomology groups 
$\HH^{p+q}(X; (\Omega^{\bullet}_X,\omega \wedge ))$, degenerates at $E_2$.

\end{corollary}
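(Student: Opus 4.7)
The plan is to combine the topological upper bound of Theorem \ref{thm:8gl}(\ref{gl2}) with the standard hypercohomology spectral sequence of the twisted holomorphic de Rham complex, and to deduce both the equality of Betti numbers and the $E_2$-degeneration from a single sandwich argument.

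First I would invoke the Deligne--Griffiths--Morgan--Sullivan formality of compact K\"ahler manifolds, so that $\Omega^{\bullet}(X,\C) \simeq_{\infty} (H^{\bullet}(X,\C), d=0)$. The $\sCDGA$ $\A^{\bullet} = (H^{\bullet}(X),d=0)$ has positive weights, with weight equal to degree, so Theorem \ref{thm:8gl}(\ref{gl2}) applies and yields the ``algebraic $\le$ topological'' inequality
\[
\beta_i(\A,\omega) \le b_i(X, \eexp_\BB(\omega))
\]
for every $\omega \in H^1(X,\C)$ and every $i$. Next, for $\omega \in H^{1,0}(X)$, I would use the fact that holomorphic $1$-forms on a compact K\"ahler manifold are closed to view $d+\omega$ as a genuine flat connection on the trivial holomorphic line bundle, whose monodromy is $\eexp_\BB(\omega)$; hence
\[
\HH^{n}\bigl(X, (\Omega_X^{\bullet}, \omega\wedge)\bigr) \cong H^{n}(X, {}_{\iota \eexp_\BB(\omega)}\C),
\]
so the abutment of the spectral sequence of the statement has total dimension $b_n(X, \eexp_\BB(\omega))$.

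The crucial observation is that, by the K\"ahler Hodge decomposition, cup product by $\omega \in H^{1,0}(X)$ preserves the bigrading of $H^{\bullet}(X,\C) = \bigoplus H^q(X,\Omega_X^p)$. The Aomoto complex $(H^{\bullet}(X,\C), \omega\wedge)$ therefore splits, for each fixed $q$, as the $q$-th row $(E_1^{\bullet,q}, d_1)$ of the $E_1$-page; taking cohomology row by row yields
\[
\beta_n(\A,\omega) = \sum_{p+q=n}\dim_{\C} E_2^{p,q}.
\]
The general convergence inequality $\dim \HH^n \le \sum_{p+q=n}\dim E_r^{p,q}$, valid at every stage $r\ge 2$, now reads $b_n(X, \eexp_\BB(\omega)) \le \beta_n(\A,\omega)$, and combined with the inequality from the previous paragraph it forces equality of all three quantities. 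Equality in the convergence bound forces $\dim E_{\infty}^{p,q} = \dim E_2^{p,q}$ for every $p,q$ with $p+q=n$, and hence the desired degeneration at $E_2$. For $\omega \in H^{0,1}(X)$ I would run the identical argument after replacing the complex structure on $X$ by its conjugate (equivalently, using the antiholomorphic twisted de Rham complex), which exchanges $H^{1,0}$ with $H^{0,1}$ while leaving the purely topological Betti number $b_i(X,\rho)$ unchanged.

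The main delicate point I anticipate is the identification of the abutment as the twisted Betti number of the local system $\eexp_\BB(\omega)$: one must check that the flat connection $d+\omega$ attached to the \emph{cohomology class} $\omega$ (rather than to a chosen harmonic representative) gives a well-defined local system isomorphic to the one cut out by $\eexp_\BB(\omega)$, and that the compatibility of the Hodge decomposition with cup product really does identify $d_1$ on $E_1^{\bullet,q} = H^{\bullet,q}(X)$ with the $q$-th row of the Aomoto differential. Once these compatibilities are established, everything else collapses to the two-line sandwich described above.
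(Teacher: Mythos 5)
Your argument is correct, and it is more self-contained than the paper's own proof, which disposes of everything except the inequality $\beta_i(\A,\omega)\le b_i(X,\eexp_{\BB}(\omega))$ by citing Corollary 4.2 of \cite{Dcompo} as a black box: that external result supplies both the reverse inequality and the $E_2$--degeneration. What you do instead is unpack that citation. Your first inequality is obtained exactly as in the paper (DGMS formality plus Theorem \ref{thm:8gl}\eqref{gl2} applied to the weight-equals-degree decomposition), and your treatment of $\omega\in H^{0,1}(X)$ by conjugation matches the paper's one-line reduction. The new content is the reverse inequality: you identify the abutment $\HH^{n}(X,(\Omega^{\bullet}_X,\nabla))$ with $H^{n}(X,{}_{\eexp_{\BB}(\omega)}\C)$ via the twisted holomorphic Poincar\'e lemma, observe that Hodge theory splits the Aomoto complex $(H^{\bullet}(X,\C),\omega\cup)$ into the rows $(E_1^{\bullet,q},d_1)$ of the Fr\"olicher-type spectral sequence, and then run the convergence estimate $\dim\HH^{n}\le\sum_{p+q=n}\dim E_2^{p,q}=\beta_n(\A,\omega)$; the sandwich then forces equality and termwise $\dim E_\infty^{p,q}=\dim E_2^{p,q}$, hence vanishing of all $d_r$, $r\ge 2$. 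All of these steps are sound, and the two compatibilities you flag as delicate (well-definedness of the local system attached to the cohomology class, and the identification of $d_1$ with the row-wise Aomoto differential) are exactly the right things to check; both are standard. One cosmetic slip: cup product with $\omega\in H^{1,0}$ does not \emph{preserve} the bigrading but maps $H^{p,q}$ to $H^{p+1,q}$; your subsequent sentence makes clear you mean that it preserves $q$ and hence splits the total complex into rows, which is what the argument actually uses.
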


Our last main result uses the canonical positive--weight decomposition of cohomology rings with trivial differential. It concerns the
{\em Farber--Novikov spectral sequence} of a connected, finite CW complex $X$, with respect to a group epimorphism,
$\nu : \pi_1(X) \surj \Z$, reviewed in \S \ref{ss88}. 

Denote by $\nu_\C \in H^1(X,\C)$ the associated cohomology class, and let $(H^{\bullet}(X,\C), \nu_\C \cdot)$ be the
corresponding Aomoto complex. Let $\nu^*: \C^{\times} \hookrightarrow \T (\pi_1(X))$ be the algebraic map 
induced on character tori. 
The Farber--Novikov spectral sequence starts at $E^i_2= H^i (H^{\bullet}(X,\C), \nu_\C \cdot)$ and converges to 
$ H^i (X, {}_{\nu^*(t)}\C)$, where $t\in \C^{\times} \setminus F$, with $F$ a finite set.

\begin{thm}
\label{thm:8deg}
If $X$ is a formal space, then $E^{\bullet}_2= E^{\bullet}_{\infty}$, in the Farber--Novikov spectral sequence.
\end{thm}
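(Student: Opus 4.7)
The plan is to combine the global upper bound of Theorem \ref{thm:8gl}\eqref{gl2} with the standard dimension monotonicity along the pages of the Farber--Novikov spectral sequence.

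\textbf{Step 1 (Setup).} Since $X$ is formal, $\Omega^{\bullet}(X,\C)\simeq_\infty (H^\bullet(X,\C),d{=}0)$. Take $\A^\bullet=(H^\bullet(X,\C),d{=}0)$, $\BB=\C^\times$, $\iota=\id_{\C^\times}$. The algebra $\A$ is defined over $\Q$ and carries the tautological positive-weight decomposition (weight $=$ degree), which is positive in degree $1$; so all hypotheses of Theorem \ref{thm:8gl}\eqref{gl2} are satisfied, with $q=\infty$. Under the canonical identification $\F(\A,\C)=H^1(X,\C)$ and the formula $\eexp_{\C^\times}(s\nu_\C)=\nu^*(e^s)$ (valid because $\nu^*$ corresponds to $\nu_\C$ under $\Hom_{\gr}(\pi_1(X),\C)\cong H^1(X,\C)$ and $\eexp_{\C^\times}$ is the group-theoretic exponential), Theorem \ref{thm:8gl}\eqref{gl2} yields
\[
\beta_i(\A,s\nu_\C)\;\le\;b_i\bigl(X,\nu^*(e^s)\bigr)\qquad\text{for every $s\in\C$ and every $i\ge0$.}
\]

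\textbf{Step 2 (Rescaling and comparison).} For $s\neq 0$, multiplication by $s^i$ in cohomological degree $i$ gives an isomorphism of cochain complexes $(H^\bullet(X,\C),\nu_\C\cdot)\cong(H^\bullet(X,\C),s\nu_\C\cdot)$, hence $\beta_i(\A,s\nu_\C)=\dim_\C H^i(H^\bullet(X,\C),\nu_\C\cdot)=\dim_\C E_2^i$. Because the forbidden set $F$ is finite, we may pick $s_0\neq 0$ with $e^{s_0}\notin F$; then the convergence statement of the Farber--Novikov spectral sequence gives $b_i(X,\nu^*(e^{s_0}))=\dim_\C E_\infty^i$. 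Combining these with the inequality from Step 1 yields
\[
\dim_\C E_2^i\;\le\;\dim_\C E_\infty^i\qquad\text{for every $i\ge 0$.}
\]

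\textbf{Step 3 (Conclusion).} In any spectral sequence of $\C$-vector spaces each $E_{r+1}^{p,q}$ is a subquotient of $E_r^{p,q}$; summing along total degree gives $\dim_\C E_2^i\ge\dim_\C E_3^i\ge\cdots\ge\dim_\C E_\infty^i$. Together with Step 2 this forces equalities throughout, so every higher differential $d_r$ ($r\ge 2$) vanishes identically and $E_2^\bullet=E_\infty^\bullet$.

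The whole argument is a formal consequence of Theorem \ref{thm:8gl}\eqref{gl2} once the bookkeeping is in place; the only genuine point to verify carefully is the identification of the Aomoto complex $(H^\bullet(X,\C),\nu_\C\cdot)$ with the $E_2$-page of the Farber--Novikov spectral sequence, together with the matching $\eexp_{\C^\times}(s\nu_\C)=\nu^*(e^s)$ between the algebraic exponential and the character obtained from $\nu$. After this, the positive-weight global inequality does all the work: the reason formality (via positive weights on $H^\bullet(X,\C)$) suffices is exactly that it makes Theorem \ref{thm:8gl}\eqref{gl2} applicable with $\A=H^\bullet(X,\C)$.
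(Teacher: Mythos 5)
Your proof is correct. It differs from the paper's argument only in how the pieces are assembled: the paper applies the local isomorphism of Theorem \ref{thm:8main}\eqref{8m2} directly at the points $t\nu_\C$ with $t$ near $0$, together with the $\C^{\times}$--invariance of resonance coming from the weight--equals--degree action, to prove the \emph{equality} $\beta_i(\A,\nu_\C)=m_i=\dim_\C E^i_{\infty}$ outright; you instead quote the global inequality of Theorem \ref{thm:8gl}\eqref{gl2} (whose proof in \S\ref{ss87} is exactly that local argument) to obtain $\dim_\C E^i_2\le\dim_\C E^i_{\infty}$, and close the loop with the standard monotonicity $\dim_\C E^i_2\ge\dim_\C E^i_{\infty}$ of spectral--sequence pages. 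The essential ingredients --- positive weights, the rescaling isomorphism $(H^{\bullet},\nu_\C\cdot)\cong(H^{\bullet},s\nu_\C\cdot)$, genericity of the minimal twisted Betti number, and the identification $\eexp_{\C^{\times}}(s\nu_\C)=\nu^*(e^s)$ --- are identical in both versions; yours is somewhat more modular, and has the merit of making explicit the subquotient step needed to pass from equality of dimensions at $E_2$ and $E_{\infty}$ to the vanishing of all higher differentials, which the paper leaves implicit.
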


Theorem \ref{thm:8deg} extends considerably Farber's degeneracy result from \cite{F1}, where $X$ is assumed to be a connected,
compact complex manifold satisfying the $d d^c$--Lemma (known then to be formal, according to \cite{DGMS}). 
Theorem \ref{thm:8deg} was also obtained by Kohno and Pajitnov, in the recent preprint \cite{KP}, with a different technique,
based on twisted versions of Aomoto complexes.

In another recent preprint \cite{Wa}, Wang proposes a more general construction of twisted  Aomoto complexes,
$\aom^{\bullet}(X, \rho)$, associated to a connected, finite CW complex $X$, and an arbitrary representation
$\rho \in \cH (\pi_1(X), \GL (\C^n))$. When $X$ is compact Kahler and $\rho$ is semi-simple, the author succeeds to adapt
our proof of Theorem \ref{thm:8main} \eqref{8m1} in the case $\iota =\id_{\GL_n}$ and to obtain an interesting extension, 
with $\rho$ in place of $1$, $\F (\A, \bb)$ changed to the Goldman--Millson quadratic cone \cite{GM}, and 
$\R^i_r (\A, \theta)$ replaced by the corresponding jump loci of $\aom^{\bullet}(X, \rho)$. Note that Wang's result
does not hold on arbitrary quasi--projective manifolds, even for $n=1$ and $\rho=1$; see Example \ref{ex:counterc}.

\section{A brief guide to the proofs}
\label{sec:out}

Roughly speaking, we use the $\K^{\times}$--action associated to the positive--weight decomposition of $\A^{\bullet}$
(described in \S \ref{ss85}) to globalize local results; we obtain the local analytic isomorphisms at the level of completions, and then
invoke either M.~Artin's approximation theorem or faithful flatness (here, Tougeron's book \cite{T} is a good reference).

The first task is to show that the local analytic rings of $\cH (G, \BB)_{(1)} $, denoted by $R$, and of $\F (\A, \bb)_{(0)}$, denoted by $\oR$,
have isomorphic completions, i.e., in the language from \cite{GM}, they have the same functor of Artin rings. This requires two ingredients.
The passage from (algebraic) flat connections on $\A^{\bullet}$ to (topological) flat connections on $X$ is made possible by our assumption 
$\Omega^{\bullet}(X, \K) \simeq_1 \A^{\bullet}$. This allows the use of the `equivalence theorem' in the deformation theory of 
flat connections from \cite{GM}, recorded here as Theorem \ref{thm:aflat}. To relate in a convenient way the deformation theory of flat connections
on $X$ and the  functor of Artin rings of $\cH (G, \BB)_{(1)} $, we need to reformulate Sullivan's result \cite{S}, on the relationship between 
the $1$--minimal model of $X$ and Quillen's \cite{Q} $\K$--unipotent completion of $G=\pi_1(X)$. This second tool is provided by our 
Corollary \ref{cor:kunip}, where we use monodromy representations of certain flat connections on $X$, associated to its $1$--minimal model. 
In this way, we obtain an identification $\wR \stackrel{\sim}{\leftrightarrow} \woR$, as explained in Proposition \ref{prop:fart} and 
Remark \ref{rem:compliso}. At the same time, Corollary \ref{cor:kunip} leads to the proof of Theorem \ref{thm:km0}.

In the proof of Theorem \ref{thm:8main}, the isomorphism between Aomoto complex cohomology of $\A$ and twisted cohomology of $X$
also needs two steps. The first is achieved in Theorem \ref{thm:adr}. Here, we fully use our assumption 
$\Omega^{\bullet}(X, \K) \simeq_q \A^{\bullet}$ to identify naturally, up to degree $q$,  the cohomology of Aomoto complexes coming from 
Artin rings, associated to $\A^{\bullet}$ and $\Omega^{\bullet}(X, \K)$ respectively. The second step is a natural identification,  between
the cohomology of Aomoto complexes coming from flat connections on $X$, and the  twisted cohomology of $X$ with coefficients in 
the associated monodromy representations. This is provided by the Sullivan--Gomez Tato twisted De Rham theorem from \cite{GT}, 
recorded here in a convenient form in Theorem \ref{thm:twdr}. 

Finally, we have to show in Theorem \ref{thm:8main} that the picture for cohomology jump loci is compatible with the isomorphism
$\wR \stackrel{\sim}{\leftrightarrow} \woR$. This is done in Section \ref{sec:tech}. We denote by $P$ 
(respectively $\oP$) the coordinate ring of $\cH (G, \BB)$ (respectively $\F (\A, \bb)$), described in the book of Lubotzky and Magid \cite{LM} 
(respectively in \S \ref{ss12}). Note that $\wR = \widehat{P}$ and $\woR = \widehat{\oP}$, where the completions are taken 
with respect to the maximal ideals of $P$ and $\oP$ corresponding to the chosen basepoints.

In Section \ref{sec:tech}, we 
construct two {\em universal} cochain complexes, $C^{\bullet}(X, \iota)$ (over $P$) and  $C^{\bullet}(\A, \theta)$ (over $\oP$).
They have the property of computing, via specialization, the twisted cohomology of $X$, respectively the Aomoto complex cohomology of $\A$.
We conclude in Proposition \ref{prop:comp}, where we consider two ring epimorphisms, $\wR \surj S$ and $\woR \surj S$,
compatible with the identification $\wR \stackrel{\sim}{\leftrightarrow} \woR$, and we show that the cohomologies of the corresponding specializations,
$C^{\bullet} (X, \iota) \otimes_P S$ and $C^{\bullet}(\A, \theta) \otimes_{\oP} S$, are $S$--isomorphic up to degree $q$. This is used in 
\S\S \ref{ss100}--\ref{ss82} to finish the proof of Theorem \ref{thm:8main} \eqref{8m1}.

In the abelian case from Theorem \ref{thm:8main} \eqref{8m2}, the local analytic isomorphism 
$$\eexp_{\BB} : \F (\A, \bb)_{(0)} \isom \cH (\pi_1(X), \BB)_{(1)}$$ turns out to induce on completions the canonical identification
$\wR \stackrel{\sim}{\leftrightarrow} \woR$. Therefore, there is no need for Artin approximation. This may be replaced by a faithful flatness
argument, to infer that the corresponding cohomology jump loci are identified by $\eexp_{\BB}$.

\section{Algebraic preliminaries}
\label{sec:prel}

We draw up an inventory of algebraic tools, needed for establishing the `equivalence theorems' for deformations of 
flat connections and of cohomology with respect to the covariant derivative, following \cite{GM}, \cite{S}, \cite{HS}.

We consider non-negatively graded vector spaces over a characteristic $0$ ground field $\K$; $\otimes$ means $\otimes_{\K}$,
$\Hom$ stands for $\Hom_{\K}$, and $V^*$ denotes the $\K$--dual of $V$. 

\subsection{Commutative algebras and Lie algebras}
\label{ss11}

We denote by $\sCDGA$ the category of commutative differential graded algebras over $\K$. The category of differential graded Lie algebras 
is denoted $\sDGL$. (In both settings, we follow the standard sign conventions.) In particular, we consider $\A^{\bullet} \in \Ob (\sCGA)$ 
(the category of commutative graded algebras) as a $\sCDGA$ with differential $d=0$. We may also view $E\in \Ob (\sLie)$ 
(the category of Lie algebras) as a $\sDGL$, with differential $d=0$ and grading $E^{\bullet}=E^0=E$.

The {\em cochain functor}, $\CC : \sLie \to \sCDGA$, associates to a finite-dimensional Lie algebra $E$ with bracket $\beta$ the $\sCDGA$
$\CC^{\bullet}(E)= \wedge^{\bullet} E^*$, with differential $d=-\beta^* : E^* \to \wedge^2 E^*$ (extended to $ \wedge^{\bullet} E^*$
by the derivation property). In this way, the category of finite-dimensional Lie algebras is identified with the category of $\sCDGA$'s that are
free (as $\sCGA$'s) and finitely generated in degree $1$. 

The bifunctor $\LL : \sCDGA \times \sLie \to \sDGL$ associates to $\A^{\bullet} \in \Ob (\sCDGA)$  and $E\in \Ob (\sLie)$ the graded
vector space $\LL (\A, E)= \A^{\bullet} \otimes E$, with differential $d\otimes E$ (here and in the sequel, we denote the identity map of
a vector space $E$ by $E$); the Lie bracket is defined by $[a\otimes e, a' \otimes e']= a a'\otimes [e,e']$, for $a,a'\in \A$ and $e,e' \in E$.

If $L^{\bullet}\in \Ob (\sDGL)$ and $\Lambda \in \Ob (\sComm)$ (the category of ungraded, commutative $\K$--algebras), we have a 
natural extension of scalars, $L^{\bullet} \otimes \Lambda$,  with differential $d\otimes \Lambda$ and bracket 
$[x\otimes \lambda, x' \otimes \lambda']= [x,x']\otimes \lambda \lambda'$. Note that $L^{\bullet} \otimes \Lambda \in \Ob (\Lambda-\sDGL)$,
that is, it is a differential graded Lie algebra over $\Lambda$.

\subsection{Flat connections}
\label{ss12}

Given $L^{\bullet}\in \Ob (\sDGL)$, set 
\begin{equation}
\label{eq=mc}
\F (L)= \{ \omega \in L^1 \, \mid \, d\omega + \frac{1}{2} [\omega, \omega]=0 \} \, .
\end{equation} 
The above set of {\em flat connections} has a distinguished element, $0\in \F (L)$. 

For $\Lambda \in \Ob (\sComm)$, we will consider maximal ideals $\m \subseteq \Lambda$ with residue field $\Lambda/\m =\K$. 
We denote $\K$--algebra homomorphisms by $\Hom_{\alg}(\Lambda, \Lambda')$. Whenever $\Lambda$ and $\Lambda'$ have distinguished 
maximal ideals, $\m$ and $\m'$, $\Hom_{\loc}(\Lambda, \Lambda')$ stands for $\K$--algebra homomorphisms sending $\m$ into $\m'$.
This applies in particular to the category of Artinian local $\K$--algebras, $\sArt$. For $A \in \Ob (\sArt)$, we denote by $\m_A$ the
maximal ideal. 

If $L\in \Ob (\sDGL)$ and $\m \subseteq \Lambda$ is a maximal ideal, note that $L^{\bullet}\otimes \m$ is a sub--$\sDGL$ of 
$L^{\bullet} \otimes \Lambda$. We will use the notation 
\begin{equation}
\label{eq=locmc}
\F_{\loc} (L\otimes \Lambda)= \F(L \otimes \m) \, .
\end{equation}

For $\A\in \Ob (\sCDGA)$ and $E\in \Ob (\sLie)$, $\F (\A, E) := \F (\A\otimes E)$ denotes the set of flat connections \eqref{eq=mc} on the $\sDGL$
$\LL (\A, E)$ constructed in \S \ref{ss11}. When both $\A^1$ and $E$ are finite--dimensional, note that $\F (\A\otimes E)$ is an (usually nonreduced) affine
variety, with coordinate ring denoted $\oP$, and maximal ideal $\overline{\m}$ corresponding to the basepoint $0\in \F (\A\otimes E)$.

Moreover, there is an {\em universal flat connection}, $\ow \in \F_{\loc} (\A \otimes E \otimes \oP)$. Picking $\K$--bases for $\A^1$ and $E$,
$\{ a_i \}$ and $\{ e_{\alpha} \}$, with dual bases $\{ a_i^* \}$ and $\{ e_{\alpha}^* \}$, we may write
\begin{equation}
\label{eq=funiv}
\ow =\sum_{i, \alpha} a_i\otimes e_{\alpha} \otimes a_i^* \otimes e_{\alpha}^* \, ,
\end{equation}
modulo the relations defining $\oP$.

Clearly, there is a natural bijective correspondence, $\Hom_{\loc} (\oP , A) \isom \F_{\loc} (\A \otimes E \otimes A)$, for $A\in \Ob (\sArt)$,
sending $\overline{\varphi} \in \Hom_{\loc}$ to $(\A \otimes E \otimes \overline{\varphi}) (\ow) \in \F_{\loc}$. 

\subsection{Flat connections and Lie cochains}
\label{ss13}

It will be very useful to interpret flat connections in $\sCDGA$ terms, as follows.

\begin{lemma}
\label{lem:cflat}
Assuming $\dim_{\K} E< \infty$, the natural $\K$--linear isomorphism $\A^1 \otimes E \isom \Hom (E^*, \A^1)$ identifies 
$\F (\A \otimes E)$ with $\Hom_{\sCDGA}( \CC^{\bullet}(E), \A^{\bullet})$. 
\end{lemma}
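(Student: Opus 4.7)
The plan is to exploit the fact that $\CC^\bullet(E)=\wedge^\bullet E^*$ is the free commutative graded algebra on the degree--$1$ vector space $E^*$. By freeness, any $\sCGA$ morphism $\phi:\wedge^\bullet E^*\to \A^\bullet$ is determined uniquely by its restriction $\phi_1:=\phi|_{E^*}:E^*\to \A^1$; conversely, any linear $\phi_1$ extends uniquely to a graded algebra map. Under the (finite--dimensional) duality $\A^1\otimes E\cong \Hom(E^*,\A^1)$, linear maps $\phi_1$ correspond bijectively to elements $\omega\in \A^1\otimes E$. Thus the first step just records the bijection, at the level of $\sCGA$ maps $\wedge^\bullet E^*\to \A^\bullet$ versus arbitrary elements $\omega\in\A^1\otimes E$.

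Next I have to pick out, inside this bijection, exactly the $\omega$'s that give rise to $\sCDGA$ maps, i.e.\ that satisfy $\phi\circ d_{\CC}=d_\A\circ \phi$. Since both sides are derivations with respect to $\phi$, it suffices to check this identity on generators $u\in E^*$, where it reads
\[
\phi_2(-\beta^* u)\;=\;d_\A\bigl(\phi_1(u)\bigr),
\]
with $\phi_2(u_1\wedge u_2)=\phi_1(u_1)\,\phi_1(u_2)$. The plan is then to show that this linear equation in $u$ is dual to the Maurer--Cartan equation $d\omega+\tfrac12[\omega,\omega]=0$ in $\A^2\otimes E$.

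I would do this by fixing a basis $\{e_\alpha\}$ of $E$ with dual basis $\{e_\alpha^*\}$ and structure constants $[e_\alpha,e_\beta]=\sum_\gamma c_{\alpha\beta}^\gamma e_\gamma$. Writing $\omega=\sum_\alpha a_\alpha\otimes e_\alpha$, the definition of $\beta^*$ gives $d_{\CC}(e_\gamma^*)=-\sum_{\alpha<\beta}c_{\alpha\beta}^\gamma\, e_\alpha^*\wedge e_\beta^*$, and the graded--commutativity of $\A$ (the $a_\alpha$ all sit in degree $1$, so they anticommute) yields
\[
\tfrac12[\omega,\omega]=\sum_\gamma\Bigl(\sum_{\alpha<\beta} c_{\alpha\beta}^\gamma\, a_\alpha a_\beta\Bigr)\otimes e_\gamma,\qquad d\omega=\sum_\gamma (da_\gamma)\otimes e_\gamma.
\]
Equating coefficients of $e_\gamma$, the Maurer--Cartan equation becomes $da_\gamma=-\sum_{\alpha<\beta}c_{\alpha\beta}^\gamma\, a_\alpha a_\beta$ for every $\gamma$, which is exactly the condition $\phi_2(-\beta^* e_\gamma^*)=d_\A(\phi_1(e_\gamma^*))$ read through the duality. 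So the two constraints cut out the same subset, proving the lemma.

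The main obstacle is purely bookkeeping: tracking the sign in $d_{\CC}=-\beta^*$ against the factor $\tfrac12$ in the Maurer--Cartan equation and the interchange of $a_\alpha a_\beta$ with $c_{\alpha\beta}^\gamma$ (both antisymmetric, producing the cancellation of the $\tfrac12$). A basis--free alternative would be to observe that the map $E\otimes \wedge^2(\A^1)\to \A^2$, $e\otimes(a\wedge a')\mapsto aa'$, intertwines the Lie bracket on $\omega\otimes\omega\in (\A^1\otimes E)^{\otimes 2}$ with the composition $\phi_2\circ\beta^*$, which is essentially the same computation packaged abstractly.
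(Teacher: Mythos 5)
Your proposal is correct and follows essentially the same route as the paper's proof: reduce to generators via freeness of $\bigwedge^{\bullet}E^*$, then match the coefficient of $e_{\gamma}$ in $d\omega+\tfrac12[\omega,\omega]$ with $(df-fd)(e_{\gamma}^*)$ using the structure constants, with the antisymmetry of $c^{\gamma}_{\alpha\beta}$ and of the product of degree--$1$ elements cancelling the $\tfrac12$. No gaps.
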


\begin{proof}
Plainly, $\Hom (E^*, \A^1)= \Hom_{\sCGA}( \wedge^{\bullet} E^*, \A^{\bullet})$. If 
$\omega =\sum_{\alpha} \omega_{\alpha}\otimes e_{\alpha} \in \A^1 \otimes E$ corresponds to 
$f\in \Hom_{\sCGA}( \CC^{\bullet}(E), \A^{\bullet})$, then $f(e_{\alpha}^*)= \omega_{\alpha}$, by construction. We have to
check that $\omega$ satisfies the Maurer--Cartan equation \eqref{eq=mc} if and only if $f$ commutes with differentials.

To this end, write $[e_{\alpha}, e_{\beta}]= \sum_{\gamma} c^{\gamma}_{\alpha \beta}\,  e_{\gamma}$. Hence,
$d e_{\gamma}^* = - \sum_{\alpha <\beta}  c^{\gamma}_{\alpha \beta} \, e_{\alpha}^* \wedge e_{\beta}^*$, according to
\S \ref{ss11}. Therefore,  
$(df-fd) (e_{\gamma}^* )= d\omega_{\gamma}+  \sum_{\alpha <\beta}  c^{\gamma}_{\alpha \beta} \, \omega_{\alpha} \cdot \omega_{\beta}$.
On the other hand, $d\omega + \frac{1}{2} [\omega, \omega]= \sum_{\gamma} (d\omega_{\gamma}
+  \sum_{\alpha <\beta}  c^{\gamma}_{\alpha \beta}\, \omega_{\alpha} \cdot \omega_{\beta}) \otimes e_{\gamma}$.
The claim follows.
\end{proof}

In particular, $\omega_E := \sum_{\alpha} e_{\alpha}^* \otimes e_{\alpha} \in \F (\CC^{\bullet} (E)\otimes E)$, and the map
$f \mapsto (f\otimes E) (\omega_E)$ gives the inverse bijection,
\begin{equation}
\label{eq=finv}
\Hom_{\sCDGA}( \CC^{\bullet}(E), \A^{\bullet}) \isom \F (\A \otimes E) \, . 
\end{equation} 

\subsection{Covariant derivative}
\label{ss14}

For $L^{\bullet}\in \Ob (\sDGL)$ and $\omega \in L^1$, consider the {\em covariant derivative} $d_{\omega}:= d+ \ad_{\omega}$. 
This is a degree $1$ self-map of $L^{\bullet}$. 

\begin{lemma}
\label{lem:covder}
Assume $\omega\in \F(L)$. Then
\begin{enumerate}

\item \label{cd1}
The covariant derivative is a differential, i.e., $d_{\omega}^2 =0$.

\item \label{cd2}
When $L^{\bullet}$ is a $\Lambda-\sDGL$, $(L, d_{\omega})$ is a $\Lambda$--cochain complex.

\item \label{cd3}
If $f\in \Hom_{\sDGL}(L, L')$, then $f: (L, d_{\omega}) \to (L', d_{\omega'})$ is a cochain map, where
$\omega'= f(\omega)$. 
\end{enumerate}
\end{lemma}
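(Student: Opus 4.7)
The lemma is a standard computation in the deformation theory of DGLAs, and I would tackle the three parts essentially in the order stated, since (2) and (3) are really observations once (1) is established.

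For part (1), my plan is to expand $d_\omega^2 = (d+\ad_\omega)^2 = d^2 + (d\,\ad_\omega + \ad_\omega d) + \ad_\omega^2$ and recognize each summand as the ad-action of a piece of the Maurer--Cartan expression. Concretely, $d^2 = 0$ by assumption; the Leibniz rule for $d$ on the bracket (with the sign $|\omega|=1$) gives $d\,\ad_\omega + \ad_\omega d = \ad_{d\omega}$; and the graded Jacobi identity applied to the odd element $\omega$ yields $2[\omega,[\omega,x]] = [[\omega,\omega],x]$, so $\ad_\omega^2 = \ad_{\tfrac{1}{2}[\omega,\omega]}$. Adding these gives $d_\omega^2 = \ad_{d\omega + \tfrac{1}{2}[\omega,\omega]}$, which vanishes precisely because $\omega \in \F(L)$.

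For part (2), I would simply note that when $L^\bullet$ is a $\Lambda$-$\sDGL$, the differential $d$ is $\Lambda$-linear by definition and the Lie bracket is $\Lambda$-bilinear (from the construction in \S\ref{ss11} of $L \otimes \Lambda$), so $\ad_\omega$ is $\Lambda$-linear for any fixed $\omega \in L^1$. Combining this with (1) shows that $(L^\bullet, d_\omega)$ is a cochain complex of $\Lambda$-modules.

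For part (3), given $f \in \Hom_{\sDGL}(L,L')$, I would verify the cochain map property directly: for $x \in L$,
\[
f(d_\omega x) = f(dx) + f([\omega,x]) = d'f(x) + [f(\omega), f(x)] = d'f(x) + [\omega', f(x)] = d_{\omega'}(f(x)),
\]
using that $f$ commutes with the differential and is a Lie algebra map, together with $\omega' = f(\omega)$. One also checks that $\omega' \in \F(L')$ (so that $d_{\omega'}$ makes sense), which is immediate: $f$ respects $d$ and the bracket, hence it sends the Maurer--Cartan equation to the Maurer--Cartan equation.

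No step here is a serious obstacle; the only thing to watch carefully is the sign bookkeeping in (1), where the degree of $\omega$ is $1$, so both $d$ and $\ad_\omega$ are odd derivations, their composition anticommutes, and the factor of $\tfrac{1}{2}$ in Maurer--Cartan is exactly what is needed to cancel the Jacobi term.
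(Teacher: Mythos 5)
Your proposal is correct and follows essentially the same route as the paper: the paper verifies $d_\omega^2\lambda = -\tfrac{1}{2}[[\omega,\omega],\lambda] + [\omega,[\omega,\lambda]] = 0$ element-wise using flatness and the graded Jacobi identity, which is exactly your operator identity $d_\omega^2 = \ad_{d\omega + \frac{1}{2}[\omega,\omega]}$ evaluated on $\lambda$, and it likewise dismisses (2) and (3) as immediate from the definition.
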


\begin{proof}
\eqref{cd1} For $\lambda \in L^p$, $d_{\omega}^2 (\lambda)= d_{\omega}(d\lambda + [\omega, \lambda])=
d^2 \lambda + [d\omega, \lambda]- [\omega, d\lambda]+ [\omega, d\lambda]+ [\omega, [\omega, \lambda]]=
-\frac{1}{2} [[\omega, \omega], \lambda] +  [\omega, [\omega, \lambda]]$, by flatness. This in turn equals zero,
by the graded Lie identities.

Properties \eqref{cd2} and \eqref{cd3} are immediate consequences of the definition of $d_{\omega}$.
\end{proof}

Fix a Lie module $V$ over $\bb$, with structure map $\theta: \bb \to \gl (V)$. Consider the semi-direct product
Lie algebra, $V\rtimes_{\theta} \bb$. For $\A^{\bullet} \in \Ob (\sCDGA)$ and $\Lambda \in \Ob (\sComm)$, there is a 
natural, split exact sequence in $\Lambda-\sDGL$, 
\begin{equation}
\label{eq=splitdgl}
0\to \LL^{\bullet}(\A, V)\otimes \Lambda \longrightarrow  \LL^{\bullet}(\A, V \rtimes_{\theta} \bb)\otimes \Lambda \longrightarrow
\LL^{\bullet}(\A, \bb)\otimes \Lambda \to 0\, ,
\end{equation}
(cf. \S \ref{ss11}), where $V$ is considered as an abelian Lie ideal of $V \rtimes_{\theta} \bb$. 

For $\omega \in  \F (\A \otimes \bb \otimes \Lambda)$, consider the {\em Aomoto complex}
\begin{equation}
\label{eq=defaom}
(\A^{\bullet} \otimes V \otimes \Lambda, d_{\omega})\, .
\end{equation}
This is a sub--$\Lambda$--cochain complex of $(\LL^{\bullet}(\A, V \rtimes_{\theta} \bb)\otimes \Lambda, d_{\omega})$,
see Lemma \ref{lem:covder}. The above construction extends the one from \cite{DPS}, where only $\sCDGA$'s with 
zero differential were considered.

The Aomoto complex is natural, in the following sense. For $f\in \Hom_{\sCDGA}(\A, \A')$ and 
$\varphi \in \Hom_{\alg}(\Lambda, \Lambda')$, there is a $\Lambda$--linear cochain map, 
\[
f\otimes V\otimes \varphi : (\A \otimes V \otimes \Lambda, d_{\omega}) \longrightarrow
(\A' \otimes V \otimes \Lambda', d_{\omega'}) \, ,
\]
where $\omega'= (f\otimes \bb \otimes \varphi)(\omega) \in  \F (\A' \otimes \bb \otimes \Lambda')$.

Assuming $\A^1$, $\bb$ and $V$ to be finite--dimensional over $\K$, we have the {\em universal Aomoto complex},
\begin{equation}
\label{eq=univaom}
C^{\bullet}(\A, \theta):= (\A^{\bullet} \otimes V \otimes \oP, d_{\ow}) \, ,
\end{equation}
see \S \ref{ss12}. This is a free $\oP$--cochain complex (finitely generated in degrees up to $q$, when $\dim_{\K} \A^{\le q} <\infty$).
Moreover, for any $\K$--point of $\F (\A \otimes \bb)$, $\omega \in \Hom_{\alg}(\oP, \K)$, the $\K$--cochain complexes 
$C^{\bullet}(\A, \theta) \otimes_{\oP} \K$ and $(\A^{\bullet} \otimes V , d_{\omega})$ are isomorphic.

We will say that a cochain map $f: C^{\bullet} \to C'^{\bullet}$ is a  {\em $q$--equivalence} ($1\le q\le \infty$) if the induced map 
in cohomology, $H^{\bullet}f$, is an isomorphism in degrees $\bullet \le q$ and a monomorphism for  $\bullet = q+1$.
(In the literature, $\infty$--equivalences are also called quasi--isomorphisms.)

The first `equivalence theorem' says that the construction \eqref{eq=defaom} preserves $q$--equivalences, in the following sense.

\begin{theorem}
\label{thm:adr}
Let $f: \A \to \A'$ be a  $q$--equivalence in $\sCDGA$. Then 
$f\otimes V \otimes A : (\A \otimes V \otimes A, d_{\omega}) \rightarrow (\A' \otimes V \otimes A, d_{\omega'}) $ is a natural
$A$--linear $q$--equivalence, for  $A \in \Ob (\sArt)$ and $\omega \in  \F_{\loc} (\A \otimes \bb \otimes A)$, where
$\omega' = (f\otimes \bb \otimes A) (\omega)\in  \F_{\loc} (\A' \otimes \bb \otimes A)$.
\end{theorem}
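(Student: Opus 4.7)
The plan is to reduce to the undeformed case ($\omega = 0$) via the $\m_A$-adic filtration: at the level of associated graded the twist $\ad_\omega$ disappears, and the hypothesis on $f$ gives the result directly.

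Since $A$ is Artinian local with residue field $\K$, we have $\m_A^n = 0$ for some $n \geq 1$. On $\A \otimes V \otimes A$ consider the finite decreasing filtration $F^k := \A \otimes V \otimes \m_A^k$, and similarly on $\A' \otimes V \otimes A$. The untwisted differential $d \otimes V \otimes \mathrm{id}$ preserves each $F^k$ pointwise, whereas $\ad_\omega$ \emph{strictly increases} filtration degree, because $\omega \in \F_{\loc}(\A \otimes \bb \otimes A)$ means precisely $\omega \in \A^1 \otimes \bb \otimes \m_A$. Hence $d_\omega = d \otimes V \otimes \mathrm{id} + \ad_\omega$ preserves the filtration, and $g := f \otimes V \otimes A$ is manifestly compatible with the two filtrations. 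On the associated graded, the twist vanishes, yielding
\[
\gr^k(\A \otimes V \otimes A,\, d_\omega) \;\cong\; (\A^{\bullet} \otimes V,\, d) \otimes_{\K} (\m_A^k/\m_A^{k+1}),
\]
and similarly for $\A'$. Since each $\m_A^k/\m_A^{k+1}$ is a $\K$-vector space, tensoring with it preserves $q$-equivalences, so the induced map $\gr^k(g)$ is a $q$-equivalence between each pair of associated graded pieces.

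To pass from $\gr^k$ to the total cohomology I use the reformulation that $g$ is a $q$-equivalence iff $H^i(\mathrm{Cone}(g)) = 0$ for all $i \leq q+1$. Now $\mathrm{Cone}(g) \cong \mathrm{Cone}(f) \otimes V \otimes A$ inherits the same $\m_A$-adic filtration, with associated graded
\[
\gr^k \mathrm{Cone}(g) \;\cong\; \mathrm{Cone}(f) \otimes V \otimes (\m_A^k/\m_A^{k+1})
\]
carrying the undeformed differential. Hence $H^i(\gr^k \mathrm{Cone}(g)) \cong H^i(\mathrm{Cone}(f)) \otimes V \otimes (\m_A^k/\m_A^{k+1})$, which vanishes for $i \leq q+1$ since $f$ is a $q$-equivalence. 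A downward induction on $k$, starting from $F^{n-1} = \gr^{n-1}$ and using the long exact sequences in cohomology attached to $0 \to F^{k+1} \to F^k \to \gr^k \to 0$ applied to the cone, establishes $H^i(\mathrm{Cone}(g)) = 0$ for all $i \leq q+1$, which is the assertion. Naturality in $A$, $\omega$, and $f$ is transparent from the construction.

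The only genuine point to verify is the essential input: the containment $\omega \in \A^1 \otimes \bb \otimes \m_A$ ensures that $\ad_\omega$ strictly raises the $\m_A$-adic filtration, which kills the twist on the associated graded. Once this is granted, the remainder is formal homological algebra driven by the finiteness of the filtration (equivalently, a convergent bounded spectral sequence argument).
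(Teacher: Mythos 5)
Your proof is correct and follows essentially the same route as the paper: both use the finite decreasing $\m_A$-adic filtration and the key observation that $\omega\in\A^1\otimes\bb\otimes\m_A$ makes $\ad_\omega$ raise filtration degree, so the twist disappears on the associated graded. Your mapping-cone plus downward-induction step is just an explicit unwinding of the ``standard spectral sequence argument'' the paper invokes at the end (and is a clean way to handle the partial, degree-$\le q$ comparison).
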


\begin{proof}
Consider the finite, decreasing filtration $\{ \A^{\bullet} \otimes V \otimes \m_A^s \}_{s\ge 0}$. Since
$\omega \in  \F (\A \otimes \bb \otimes \m_A)$, the filtration is compatible with the differential $d_{\omega}$ and 
$E_1^{s,t}= H^{s+t} \A \otimes V \otimes \gr_s (A)$, where $\gr_{\bullet} (A)$ stands for the $\m_A$--adic associated graded,
in the spectral sequence converging to $H^{s+t} (\A \otimes V \otimes A, d_{\omega})$. Similar considerations apply to
$(\A' \otimes V \otimes A, d_{\omega'})$. Our hypothesis on $H^{\bullet}f$ implies that $E_1^{\bullet}(f\otimes V\otimes A)$
is an isomorphism for $\bullet \le q$ and a monomorphism for $\bullet =q+1$. The conclusion 
follows by a standard spectral sequence argument.
\end{proof}

Note that the assumption $\omega \in  \F_{\loc} (\A \otimes \bb \otimes A)$ is essential: when $\omega \in  \F (\A \otimes \bb \otimes A)$
is an arbitrary flat connection, Theorem \ref{thm:adr} may fail.

\subsection{The Deligne--Schlessinger--Stasheff theorem}
\label{ss15}

We will need to consider the category $\sACDGA$ of augmented $\sCDGA$'s $(\A, \varepsilon)$, where 
$\varepsilon \in \Hom_{\sCDGA}( \A, \K)$ is the {\em augmentation}; $f\in  \Hom_{\sACDGA}( (\A, \varepsilon), (\A' , \varepsilon') )$ 
if $f\in  \Hom_{\sCDGA}(\A , \A' )$ and $\varepsilon' f=\varepsilon$. The {\em augmentation ideal}, $\tA^{\bullet}= \ker (\varepsilon)$
is a differential graded ideal of $\A^{\bullet}$. Note that the inclusion identifies $H^+ \tA$ with $H^+ \A$.

We say that $\A^{\bullet} \in \Ob (\sCDGA)$ is {\em connected} if $\A^0= \K \cdot 1$, and {\em $h$--connected} if 
$(H^{\bullet} \A, d=0)$ is connected, i.e., $H^0 \A =\K \cdot 1$. Clearly, $H^0 \tA=0$ when $\A$ is $h$--connected, and
$\tA^0 =0$ if $\A$ is connected. Similarly, when $f\in  \Hom_{\sACDGA}(\A, \A' )$ is a $q$--equivalence and both $\A$ and $\A'$ are
$h$--connected, $\widetilde{f}: \tA \to \tA'$ is again a $q$--equivalence.

Note also that a connected $\sCDGA$ $\A^{\bullet}$ has a unique augmentation $\varepsilon$, sending $\A^+$ to $0$ and $1$ to $1$. 
Moreover, $\Hom_{\sACDGA}(\A, \A' )= \Hom_{\sCDGA}(\A, \A' )$, when both $\A$ and $\A'$ are connected. 

If $(\A^{\bullet}, \varepsilon) \in \Ob (\sACDGA)$, $E\in \Ob (\sLie)$ and $\Lambda \in \Ob (\sComm)$, the sequence
$$0\to \tA^{\bullet} \otimes E\otimes \Lambda \longrightarrow \A^{\bullet} \otimes E\otimes \Lambda 
\stackrel{\varepsilon \otimes E \otimes \Lambda}{\longrightarrow} E \otimes \Lambda \to 0$$ is exact in $\Lambda-\sDGL$. 
Clearly, the inclusion identifies $\F (\tA^{\bullet} \otimes E\otimes \Lambda)$ with $\F (\A^{\bullet} \otimes E\otimes \Lambda)$,
as well as $\F_{\loc} (\tA^{\bullet} \otimes E\otimes \Lambda)$ with $\F_{\loc} (\A^{\bullet} \otimes E\otimes \Lambda)$, for
any maximal ideal of $\Lambda$.

Plainly, $f\in \Hom_{\sACDGA}(\A, \A' )$ induces  $\widetilde{f} \otimes E \in \Hom_{\sDGL}(\tA \otimes E, \tA' \otimes E )$.
Furthermore, if $f$ is a $q$--equivalence and both $\A$ and $\A'$ are $h$--connected, then $\widetilde{f} \otimes E$ is a 
$q$--equivalence.

The second `equivalence theorem' is an immediate consequence of Theorem 2.4  from Goldman and Millson \cite{GM}.

\begin{theorem}
\label{thm:aflat}
Let $f\in \Hom_{\sCDGA} (\A, \A')$ be a $1$--equivalence between connected algebras. Then
$f\otimes E\otimes \m_A : \F_{\loc} (\A \otimes E\otimes A) \isom \F_{\loc} (\A' \otimes E\otimes A)$
is a natural bijection, for $E\in \Ob (\sLie)$ and $A \in \Ob (\sArt)$.
\end{theorem}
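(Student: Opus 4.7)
The plan is to reduce to Goldman--Millson's deformation-theoretic result by passing to augmentation ideals and then observing that, because connectedness forces the ambient $\sDGL$ to be concentrated in positive degrees, the usual GM equivalence of Deligne groupoids (Maurer--Cartan modulo gauge) collapses to a bijection of Maurer--Cartan sets. The whole argument runs by induction on the length of $A$.

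First I would exploit the remark in \S \ref{ss15}: since $\A$ and $\A'$ are connected, each has a unique augmentation, and the inclusions identify $\F_{\loc}(\A\otimes E\otimes A)=\F(\tilde{\A}\otimes E\otimes \m_A)$ and similarly for $\A'$. Moreover $\tilde{\A}^{\,>0}=\A^{\,>0}$ while $\tilde{\A}^0=0$, and the same holds for $\A'$. Consequently $\tilde{f}:\tilde{\A}\to\tilde{\A}'$ is a $1$-equivalence of $h$-connected augmentation ideals, and the $\sDGL$ $L:=\tilde{\A}\otimes E\otimes \m_A$ satisfies $L^0=0$, so no gauge action is present. This already converts the GM conclusion into an honest set-theoretic bijection, and it is the observation on which the whole argument hinges; the main obstacle is really just keeping track of which hypotheses of GM survive when only a $1$-equivalence is available, but the Maurer--Cartan equation \eqref{eq=mc} only involves degrees $1$ and $2$, so this is exactly enough.

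Next I would induct on the smallest $n$ with $\m_A^n=0$. The base case $\m_A=0$ is trivial since both sides are $\{0\}$. For the inductive step, choose a small extension $0\to I\to A\to \overline{A}\to 0$ with $I\cdot\m_A=0$, and assume the bijection is known for $\overline{A}$. Fix $\bar\omega\in \F_{\loc}(\A\otimes E\otimes \overline{A})$ and its image $\bar\omega'$ on the $\A'$ side. Because $I\cdot\m_A=0$, a lift $\omega$ of $\bar\omega$ to $\A\otimes E\otimes A$ exists precisely when a canonical class in $H^2(\tilde{\A})\otimes E\otimes I$, constructed from the curvature $d\tilde\omega+\tfrac12[\tilde\omega,\tilde\omega]$ of an arbitrary degree-one lift $\tilde\omega$, vanishes; and when it does, the set of lifts is a torsor over $Z^1(\tilde{\A})\otimes E\otimes I=H^1(\tilde{\A})\otimes E\otimes I$ (the equality using $\tilde{\A}^0=0$, so that $B^1=0$). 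These obstruction classes and torsor structures are natural in $\tilde{f}\otimes E\otimes I$.

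Finally, the $1$-equivalence of $\tilde{f}$ gives an isomorphism $H^1\tilde{f}$ and a monomorphism $H^2\tilde{f}$; tensoring with $E\otimes I$ preserves these properties since $E\otimes I$ is a $\K$-vector space. Hence the obstruction on the $\A$ side vanishes iff the corresponding obstruction on the $\A'$ side vanishes, and in that case the induced map of torsors is a bijection. Combined with the inductive bijection over $\overline{A}$, this yields the bijection over $A$. Naturality is immediate from the functorial construction, completing the proof. The only non-routine point throughout is the vanishing $\tilde{\A}^0=0$, which simultaneously removes the gauge action and collapses $Z^1$ to $H^1$; once this is identified, the rest is standard Artinian induction.
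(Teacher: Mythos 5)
Your proposal is correct. The first half coincides exactly with the paper's proof: pass to the augmentation ideals, observe that connectedness gives $\tA^0=\tA'^0=0$ so that the degree--zero (gauge) action in the Goldman--Millson formalism is trivial, and note that $\widetilde{f}\otimes E$ is a $1$--equivalence of $\sDGL$'s. Where you diverge is that the paper then simply quotes the Deligne--Schlessinger--Stasheff theorem (Theorem 2.4 of \cite{GM}) as a black box to get the bijection of Maurer--Cartan orbit sets, whereas you re-prove the needed special case from scratch by Artinian induction over small extensions $0\to I\to A\to \overline{A}\to 0$ with $I\cdot\m_A=0$: the obstruction to lifting lives in $H^2(\tA)\otimes E\otimes I$ and is natural, the fiber of lifts (when nonempty) is a torsor over $Z^1(\tA)\otimes E\otimes I=H^1(\tA)\otimes E\otimes I$ because $B^1(\tA)=d(\tA^0)=0$, and injectivity of $H^2\widetilde{f}$ together with bijectivity of $H^1\widetilde{f}$ then transports fibers bijectively. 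This is a sound and standard argument (your obstruction class is indeed a well-defined cocycle class, since $[\kappa,\tilde\omega]$ and $[\eta,\eta]$ die in $I\cdot\m_A=I^2=0$), and it has the merit of making completely explicit why only degrees $1$ and $2$ of the cohomology enter, i.e., why a $1$--equivalence suffices; the cost is length, and the benefit of the paper's route is that the citation also covers the general situation with nontrivial gauge action, which your collapse to $L^0=0$ deliberately avoids.
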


\begin{proof}
By the above remarks, we may replace $\A$ by $\tA$ and $\A'$ by $\tA'$ in the conclusion; at the same time, we know that
$\widetilde{f} \otimes E$ is a $1$--equivalence in $\sDGL$.

By the Deligne--Schlessinger--Stasheff theorem proved in \cite{GM}, $\widetilde{f}\otimes E\otimes \m_A$ induces a 
bijection, 
$\F (\tA \otimes E\otimes \m_A) /  \tA^0 \otimes E\otimes \m_A \isom \F (\tA' \otimes E\otimes \m_A)/ \tA'^0 \otimes E\otimes \m_A$,
between orbits in $\F$ of a certain action of degree $0$ components. 
Since both algebras are connected, these degree $0$ pieces are trivial, and our conclusion follows.
\end{proof}

\section{Rational homotopy theory}
\label{sec:rht}

In this section, we start by recalling a couple of basic facts from rational homotopy theory (RHT), harvested from
\cite{BG, H, L, M, Q, S}. We then establish the RHT setup for the proofs of the results from the Introduction.
Except otherwise stated, we work over a characteristic zero field $\K$.

\subsection{Minimal algebras}
\label{ss21}

The {\em free} $\sCGA$ on a positively graded vector space $W^{\bullet}$ is denoted $\bigwedge W^{\bullet}$.
It is the tensor product of the exterior algebra on $W^{odd}$ and the symmetric algebra on $W^{even}$; in
particular it is connected. We denote by $\sMin$ the full subcategory of $\sCDGA$ consisting of free $\sCGA$'s whose
differentials satisfy a certain condition, called {\em minimal}. A minimal  $\sCDGA$ $(\bigwedge W^{\bullet}, d)$
belongs to $\Ob (\sMin_q)$, for $q\ge 1$, if $W^{>q}=0$, and is called {\em $q$--minimal}. Note that
$\sMin_{\infty}=\sMin$. 

The category $\sMin_1$ will be particularly important for our purposes, so we describe it in detail. Let
$\cN^{\bullet}= (\bigwedge^{\bullet} W, d)$ be freely generated in degree $1$. $\cN$ is $1$--minimal if
$W=\cup_{n\ge 1} F^n (\cN)$, where the {\em canonical}, increasing filtration of $W=\cN^1$ is inductively
defined by $F^1=0$ and $F^n =(d_{\mid W})^{-1} (\bigwedge ^2 F^{n-1})$. In particular, 
$ \cN =\lim \limits_{\longrightarrow} (\bigwedge^{\bullet} F^n, d)$ in $\sCDGA$, if $\cN \in \Ob (\sMin_1)$.

Note that $F^2=H^1 \cN$. When $\dim_{\K} H^1 \cN <\infty$, it follows by induction that $F^n$ is finite--dimensional,
for all $n$. In this case, $(\bigwedge^{\bullet} F^n, d)= \CC^{\bullet}(E^n)$, as explained in \ref{ss11}, and $\cN$
gives rise to a Lie tower of finite--dimensional nilpotent Lie algebras and central extensions,
\begin{equation}
\label{eq=ltower}
\cdots \surj E^{n+1} \stackrel{q_n}{\surj} E^n \surj\cdots
\end{equation}
This construction associates to $F^n$ the dual vector space $E^n$, with Lie bracket the negative dual of
$d: F^n \to \bigwedge^2 F^n$, and the above projections $q_n$ are dual to the inclusions $F^n \hookrightarrow F^{n+1}$.

For $E\in \Ob (\sLie)$, the descending {\em lower central series} filtration will be denoted by $\{ E_r \}_{r\ge 1}$, where
$E_1=E$ and $E_{r+1}=[E, E_r]$. (For a group $G$, we may replace the Lie bracket $[\cdot, \cdot]$ by the group
commutator $(\cdot, \cdot)$ and perform the same construction to get the lower central series filtration $\{ G_r \}_{r\ge 1}$.)
With this notation, it is known that $E_n^n=0$ in \eqref{eq=ltower}, for all $n$. 

\subsection{Campbell--Hausdorff groups}
\label{ss22}

We will need later on the following functorial construction, that associates to a finite--dimensional nilpotent Lie algebra $E$
the {\em exponential group} $\eexp (E)=E$, endowed with the Campbell--Hausdorff multiplication from classical local Lie theory:
$x\cdot y=x+y+ \frac{1}{2} [x,y]+\cdots$ Note that in this way $\eexp (E)$ becomes a unipotent linear algebraic group with
Lie algebra $E$, that is uniquely divisible and nilpotent as an abstract group, and $\eexp (E_r)=\eexp (E)_r$, for all $r$.
Moreover, $\Hom _{\sLie}(E, E')= \Hom_{\alggr}(\eexp (E), \eexp (E'))$, and
$\Hom _{\sLie}(E, E')= \Hom_{\gr}(\eexp (E), \eexp (E'))$, when $\K =\Q$.

\begin{example}
\label{ex:lieart}
Starting with a finite--dimensional Lie algebra $E$ and $A\in \Ob (\sArt)$, we may construct another finite--dimensional Lie algebra,
$E\otimes \m_A$, by extension of scalars; see \ref{ss11}. Moreover,  $E\otimes \m_A$ is a nilpotent Lie algebra, since $\m_A$ is a
nilpotent ideal. The groups $\eexp (E\otimes \m_A)$ play a key role in the analysis of analytic germs of representation varieties at the origin.
\end{example}

\subsection{Minimal models}
\label{ss23}

A {\em $q$--minimal model map} is a $q$--equivalence in $\sCDGA$, $\cN \to \A$, with $\cN \in \Ob (\sMin_q)$. A basic result in
rational homotopy theory is that every $h$--connected $\A \in \Ob (\sCDGA)$ has a unique {\em $q$--minimal model}
$\M_q (\A)\in \Ob (\sMin_q)$, for which there is a $q$--equivalence in $\sCDGA$, $\M_q (\A) \to \A$. For $q=\infty$, we shall
omit $q$ from notation.

If $\M_q (\A)= (\bigwedge W^{\bullet}, d)$ and $r\le q$, then $\M_r (\A)= (\bigwedge W^{\le r}, d)$. Conversely, if
$\M_q (\A) \to \A$ is any $q$--minimal model map, it can be extended to an $r$--minimal model map,
$\M_r (\A) \to \A$, for $r\ge q$, where $\M_r (\A)$ is obtained from $\M_q (\A)$ by adding free $\sCGA$ generators and 
extending the differential. 

The key definition below is inspired from D. Sullivan's notion of {\em formality} (corresponding to $\infty$--formality)
introduced in \cite{DGMS}. 

\begin{definition}
\label{def:qtype}
The $\sCDGA$'s $\A$ and $\overline{\A}$ have {\em the same $q$--type} ($1\le q\le \infty$) if there is a zigzag of 
$q$--equivalences in $\sCDGA$ connecting them. Notation: $\A \simeq_q \overline{\A}$; the associated homology
isomorphism will be denoted $H^{\le q} \A \stackrel{\sim}{\leftrightarrow} H^{\le q} \overline{\A}$. The $\sCDGA$
$\A$ is {\em $q$--formal} if  $\A^{\bullet} \simeq_q (H^{\bullet} \A, d=0)$. 
\end{definition}

By minimal model theory for $h$--connected $\sCDGA$'s, $\A \simeq_q \overline{\A}$ if and only if there is a diagram
$\A \leftarrow \cN \rightarrow \overline{\A}$ in $\sCDGA$, where both arrows are $q$--minimal model maps.

\subsection{Untwisted De Rham theory}
\label{ss24}

We briefly review the De Rham algebras of a topological space $X\in \Ob (\sTop)$. Denote by 
$K_{\bullet}$ the simplicial set $\Sing_{\bullet}(X) \in \Ob (\sSS)$. The singular simplices,
$K_n = \{ \sigma: \Delta_n \to X \}$, are endowed with the usual face and degeneracy operators,
$\partial_i : K_{n+1} \to K_n$ and $s_i : K_n \to K_{n+1}$. For $\sigma \in K_n$, set $\mid \sigma \mid = n$.

Let $\Omega^{\bullet}_{\K} (n)$ denote the $\sCDGA$ of differential forms on $\Delta_n$ having $\K$--polynomial
coefficients in the barycentric coordinates. Let $\partial_i^*: \Omega^{\bullet}_{\K} (n+1) \to \Omega^{\bullet}_{\K} (n)$
and $s_i^*: \Omega^{\bullet}_{\K} (n) \to \Omega^{\bullet}_{\K} (n+1)$ be the induced $\sCDGA$ maps.

For $K\in \Ob (\sSS)$, denote by $\Omega^{\bullet}_{\K} (K)\in \Ob (\sCDGA)$ the global sections of the associated
simplicial sheaf in $\sCDGA$, that is,
\begin{equation}
\label{eq=drsheaf}
\Omega^{\bullet}_{\K} (K)= \big \{ \eta_{\sigma} \in \Omega^{\bullet}_{\K} (\mid \sigma \mid), \sigma \in K \, \mid \, 
\partial^*_i  \eta_{\sigma}= \eta_{\partial_i \sigma}, s^*_i  \eta_{\sigma}= \eta_{s_i \sigma}, \forall i \big \} \, .
\end{equation}

Setting $\Omega^{\bullet}_{\K} (X)= \Omega^{\bullet}_{\K} (\Sing (X))$, we have the following De Rham theorem in $\sTop$.

\begin{theorem}
\label{thm:topdr}
There is a natural $\sCGA$ isomorphism, $H^{\bullet} \Omega_{\K} (X) \cong H^{\bullet} (X, \K)$, preserving 
$\Q$--structures. If $X$ is path--connected, $\M_q (\Omega^{\bullet}_{\K} (X))= \M_q (\Omega^{\bullet}_{\Q} (X)) \otimes \K$,
for all $q$.
\end{theorem}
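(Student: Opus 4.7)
The plan is to prove the two assertions of Theorem \ref{thm:topdr} separately, leaning on the classical comparison theorem of Sullivan (see \cite{BG,S}) for part one, and on uniqueness of minimal models under flat base change for part two.

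For part one, I would construct the integration map explicitly. Given $K=\Sing_\bullet(X)$, define $I:\Omega^\bullet_\K(X)\to C^\bullet(X,\K)$ by
\[
I(\eta)(\sigma)\;=\;\int_{\Delta_{|\sigma|}}\eta_\sigma,\qquad \sigma\in K_{|\sigma|}.
\]
Stokes' theorem together with the compatibility conditions in \eqref{eq=drsheaf} show that $I$ is a cochain map. The forgetful functor $K\mapsto \Omega^\bullet_\K(K)$ from $\sSS$ to $\sCDGA$ and the singular cochain functor $K\mapsto C^\bullet(K,\K)$ are both extended from $\Delta^{\mathrm{op}}$ to $\sSS$ in a way that commutes with colimits, so to show that $I$ is a quasi-isomorphism it suffices to check it on the standard simplices $\Delta_n$; there both complexes compute $\K$ concentrated in degree $0$, by direct computation of the De Rham cohomology of the affine simplex on the Sullivan side. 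This gives the cohomology isomorphism additively on any $K$.

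To upgrade to a $\sCGA$ isomorphism, I would observe that $I$ itself is not multiplicative, but there is an explicit cochain homotopy (the Dupont-type "shuffle" formula) making $I$ into a map of $E_\infty$-algebras; alternatively, one can compare $I$ with the Alexander--Whitney/Eilenberg--Zilber machinery as in \cite{BG}. Multiplicativity on cohomology then follows. Naturality in $X$ is clear from the definitions. The assertion about $\Q$-structures is immediate because the integration map sends $\Omega^\bullet_\Q(n)$ into $C^\bullet(X,\Q)$, since the integrals of rational polynomials on the standard simplex (taken with respect to the canonical orientation and integer vertices) are rational numbers. The main obstacle in this part is the multiplicativity statement, which requires a nontrivial chain-level argument; I would simply cite \cite[\S 8.9]{BG} to avoid reproducing the calculation.

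For part two, let $\cN\to \Omega^\bullet_\Q(X)$ be a $q$--minimal model map over $\Q$; by Definition \ref{def:qtype} its cohomology is $H^{\le q}(X,\Q)$ with an injection in degree $q+1$. Tensor with $\K$: the resulting algebra $\cN\otimes\K$ is still a $q$-minimal $\K$-$\sCDGA$, because minimality is preserved under extension of scalars (the canonical filtration from \S \ref{ss21} tensorizes), and the map $\cN\otimes\K\to\Omega^\bullet_\Q(X)\otimes\K$ remains a $q$-equivalence since $H^\bullet\Omega^\bullet_\Q(X)\otimes\K\cong H^\bullet(X,\Q)\otimes\K\cong H^\bullet(X,\K)\cong H^\bullet\Omega^\bullet_\K(X)$, where the last isomorphism uses part one. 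Composing with any $\sCDGA$ zigzag of $q$-equivalences connecting $\Omega^\bullet_\Q(X)\otimes\K$ to $\Omega^\bullet_\K(X)$ (obtained, for a path--connected $X$, by comparing both to the minimal Sullivan model via part one) exhibits $\cN\otimes\K$ as a $q$-minimal model of $\Omega^\bullet_\K(X)$. The uniqueness of $q$-minimal models recalled in \S \ref{ss23} then gives $\M_q(\Omega^\bullet_\K(X))\cong \M_q(\Omega^\bullet_\Q(X))\otimes\K$.
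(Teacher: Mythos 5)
The paper states Theorem \ref{thm:topdr} without proof, as one of the ``basic facts from rational homotopy theory'' recalled at the start of Section \ref{sec:rht} from \cite{BG,S}, so there is no internal argument to compare against; your sketch is the standard Bousfield--Gugenheim/Sullivan proof and is essentially correct. Two small remarks: in part one, ``commutes with colimits'' alone does not reduce the quasi-isomorphism claim to the standard simplices --- one also needs the extendability (cofibration-to-surjection) property of the simplicial $\sCDGA$ $\Omega^{\bullet}_{\K}(\bullet)$ to control the limits, which is exactly what \cite{BG} supplies; and in part two the zigzag you invoke can be taken to be the single natural inclusion $\Omega^{\bullet}_{\Q}(X)\otimes\K\hookrightarrow\Omega^{\bullet}_{\K}(X)$, which is a quasi-isomorphism by part one together with flatness of $\K$ over $\Q$, avoiding the slightly circular-sounding detour through the minimal model.
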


When $\K =\RR$ or $\C$, let $\Omega^{\bullet}_{\diff} (n)$ be the usual $C^{\infty}$ De Rham $\sCDGA$ of $\Delta_n$, containing
$\Omega^{\bullet}_{\K} (n)$ as a sub--$\sCDGA$. Replacing $\Omega^{\bullet}_{\K}$ by $\Omega^{\bullet}_{\diff}$ in
\eqref{eq=drsheaf}, we obtain a natural $\infty$--equivalence in $\sCDGA$,  
$\Omega^{\bullet}_{\K}(K) \hookrightarrow \Omega^{\bullet}_{\diff}(K)$.
Setting $\Omega^{\bullet} (X, \K)= \Omega^{\bullet}_{\diff} (\Sing (X))$, we thus have:
$\Omega^{\bullet}_{\K} (X) \simeq_{\infty} \Omega^{\bullet} (X, \K)$, for $X \in \Ob (\sTop)$. 

Therefore, $\Omega^{\bullet} (X, \K) \simeq_q \A^{\bullet} \eqv \Omega^{\bullet}_{\K} (X) \simeq_q \A^{\bullet}$,
and this implies, when $X$ is path--connected, the existence of two $q$--minimal model maps, 
$\Omega^{\bullet}_{\K} (X) \stackrel{f_q}{\leftarrow} \cN^{\bullet} \stackrel{\overline{f_q}}{\rightarrow} \A^{\bullet}$,
where $f_q$ is defined over $\Q$. 

We will say that the space $X$ is $q$--formal (over $\K$) if the $\sCDGA$ $\Omega^{\bullet}_{\K} (X)$ is $q$--formal.
Equivalently, $\Omega^{\bullet} (X, \K)$ is $q$--formal, over $\K=\RR, \C$. When $X$ is path--connected with finite Betti numbers,
it is known that formality (i.e., $\infty$--formality) is independent of $\K$. 

For a pointed space $(X, \pt)$, note that $\Omega^{\bullet}_{\K} (X), \Omega^{\bullet} (X, \K) \in \Ob (\sACDGA)$, since
$\Omega^{\bullet}_{\K} (\pt)= \Omega^{\bullet} (\pt, \K)=\K$, and $\Omega^{\bullet}_{\K} (X) \hookrightarrow \Omega^{\bullet} (X, \K)$
is an $\sACDGA$--map. 

\subsection{Finiteness conditions}
\label{ss25}

If $X$ is a connected CW--complex, we may and we shall suppose (up to homotopy) that $X^{(0)}= \{ \pt \}$. We denote $\pi_1 (X, \pt)$ by $G$.
Let $c: X \to K(G,1)$ be a classifying map. By Theorem \ref{thm:topdr}, 
$c^*: \Omega^{\bullet}_{\K} (K(G,1)):= \Omega^{\bullet}_{\K}(G) \to \Omega^{\bullet}_{\K}(X)$ is a $1$--equivalence.
Hence, $\Omega^{\bullet}_{\K}(X)$ and $\Omega^{\bullet}_{\K}(G)$ have the same $1$--minimal model, 
$\cN =(\bigwedge^{\bullet}W, d)\otimes_{\Q} \K$. 

Assume that $X^{(1)}$ is finite, that is, the group $G$ is finitely generated. Later on, we shall see that the exponential groups
arising from \eqref{eq=ltower} give, in a particularly  convenient way, the so-called {\em $\Q$--Malcev completion} for the nilpotent 
quotients $\{ G/G_n \}_{n\ge 1}$. We recall that the {\em $\Q$--Malcev completion map} of a nilpotent group $N$,
$\kk : N\to \Mal (N)$, is a group homomorphism, universal with respect to morphisms into uniquely divisible groups \cite{Q}.

When looking at cohomology jump loci from an analytic viewpoint, the following finiteness properties emerge naturally,
for $1\le q\le \infty$.

\begin{definition}
\label{def:fin}
A topological space $X$ is {\em $q$--finite} if  $X$ has the homotopy type of a connected CW--complex with finite $q$--skeleton.
A $\sCDGA$ $\A^{\bullet}$ is {\em $q$--finite} if $\A$ is connected and $\dim_{\K} \A^{\le q} < \infty$. 
For $q=\infty$, our convention is that  $X^{(\infty)}=X$ and $\A^{\le \infty}= \A$. 
\end{definition}

\subsection{The $\sCDGA$ setup}
\label{ss26}

Let $X$ be a pointed path--connected topological space. Assume 
$\Omega^{\bullet} (X, \K) \simeq_q \A^{\bullet}$, with $\A$ connected. Denote by $\M$ the common
$q$--minimal model of the $\sCDGA$'s $\Omega^{\bullet} (X, \K)$ and $\A^{\bullet}$. Then there is a diagram in $\sACDGA$,
$\Omega^{\bullet} (X, \K) \stackrel{f}{\longleftarrow} \M^{\bullet} \stackrel{\overline{f}}{\longrightarrow} \A^{\bullet}$, 
where both $f$ and $\overline{f}$ are $q$--equivalences. Furthermore, $\M$ is defined over $\Q$, and we may assume 
that $H^{\bullet}f$ preserves $\Q$--structures. We will denote by $\cN \hookrightarrow \M$ the $1$--minimal model,
giving rise to the induced  diagram in $\sACDGA$, 
$\Omega^{\bullet} (X, \K) \stackrel{f_1}{\longleftarrow} \cN^{\bullet} \stackrel{\overline{f_1}}{\longrightarrow} \A^{\bullet}$,
where both $f_1$ and $\overline{f_1}$ are $1$--equivalences.

\section{Positive weight}
\label{sec:ex}

We discuss, on examples, a property of $\sCDGA$'s introduced by Body--Sullivan and Morgan, see \cite{BS}: the
existence of positive weight decompositions. This notion is related to a natural extension of formality.

\begin{definition}
\label{def:posw}
A {\em weight decomposition} of a $\Q$--$\sCDGA$ $(\A^{\bullet}, d)$ is a direct sum splitting, 
$\A^i =\oplus_{j\in \Z} \A^i_j$, for $i\ge 0$, such that $\A^{\bullet}_j \cdot \A^{\bullet}_k \subseteq \A^{\bullet}_{j+k}$
and $d(\A^{\bullet}_j) \subseteq \A^{\bullet}_j$. We will say that $\A$ {\em has positive weights} if there is 
a weight decomposition with the property that $\A^1_j =0$, for all $j\le 0$. 
\end{definition}

\begin{example}
\label{ex:fposw}
Let $X$ be a $q$--formal (over $\Q$), $q$--finite space. Then there is a length two zigzag of $q$--minimal model maps, 
$\Omega^{\bullet} (X, \K) \leftarrow \M \rightarrow (H^{\bullet}(X, \K), d=0)$. Furthermore, the associated homology isomorphism
from Definition \ref{def:qtype}, $H^{\le q} \Omega (X, \K) \stackrel{\sim}{\leftrightarrow} H^{\le q}(X, \K)$, clearly 
preserves the natural $\Q$--structures. Obviously, the $\sCDGA$ $(H^{\bullet}(X, \K), d=0)$ is defined over $\Q$, and $q$--finite.

Note that $(H^{\bullet}(X, \Q), d=0)$ has the canonical positive weight decomposition $H^i (X, \Q) =H^i_i (X, \Q)$, for all $i$.
\end{example}

\begin{example}
\label{ex:qproj}
(cf. \cite{M}, see also \cite{FM}) Let $X$ be a quasi--projective manifold. Pick any smooth compactification, $X= \overline{X}\setminus D$,
where $D=\cup_{i=1}^n D_i$ is a union of smooth divisors with normal crossings.

There is an associated $\Q$--$\sCDGA$, $(\A^{\bullet}, d)=(\A^{\bullet}(\overline{X},D), d)$, called the {\em Gysin model} of the compactification, constructed as follows.
As a vector space, $\A^k= \bigoplus_{p+l=k} \A^{p,l}$, where $\A^{p,l}= \bigoplus_{\mid S \mid =l} H^p (\cap_{i\in S} D_i, \Q)(-l)$,
where $S$ runs through the $l$--element subsets of $\{ 1,\dots,  n \}$ and $(-l)$ denotes the Tate twist. The multiplication is induced by cup--product, and 
has the property that $\A^{p,l} \cdot \A^{p',l'} \subseteq \A^{p+p',l+l'}$. The differential, $d: \A^{p,l} \to \A^{p+2, l-1}$, is
defined by using the various Gysin maps coming from intersections of divisors. See \cite{FM} for full details. 

Morgan proved in \cite{M} that $\Omega^{\bullet}_{\Q} (X) \simeq_{\infty}(\A^{\bullet}, d)$, in particular
$\Omega^{\bullet} (X, \K) \simeq_{\infty}(\A^{\bullet}, d) \otimes \K$, and the associated homology isomorphism
preserves $\Q$--structures. By definition the weight of $\A^{p,l}$ is $p+2l$, and we clearly obtain a positive weight decomposition of
$(\A^{\bullet}, d)$, inducing  a splitting of Deligne's weight filtration on $H^{\bullet}(X,\Q)$ (as noted in the Introduction for $H^1(X,\Q)$). 

Note also that both $X$ and $\A$ are $\infty$--finite objects. 

Let $\iota: X \to \overline{X}$ be the inclusion and note that $\A^{p,l}$ (with the Tate twist forgotten) is just the $E_2^{p,l}$-term 
$H^p(  \overline{X}, R^l\iota_*\Q_X)$  in the Leray spectral sequence of the inclusion $\iota$, see also \cite[(3.2.4.1)]{De2},
where a different indexing of the spectral sequence is used.
The fact that the cohomology of  $(\A^{\bullet}, d)$ is the cohomology of $X$ is nothing else but the fact that the above spectral sequence 
degenerates at $E_3$, see Corollary (3.2.13) in \cite{De2}.

If $f: X \to X'$ is a morphism of quasi--projective manifolds and if $\overline{X}$ and $\overline{X'}$ are smooth compactifications as above, 
with normal crossing divisors at infinity $D$ and $D'$, such that $f$ extends to a regular map $\overline{f}: \overline{X} \to \overline{X'}$, 
then there is an induced $\sCDGA$ morphism 
$$f^*:\A^{\bullet}(\overline{X'},D') \to \A^{\bullet}(\overline{X},D)$$
which respects the bigrading. This comes from the  functoriality of the Gysin spectral sequence and the fact that the $d_2$-differentials, 
identified to the Gysin morphisms, are morphisms of Hodge structures of type $(-1,-1)$. See also \S 3.2.11 in \cite{De2}, which explains 
in particular how two Gysin models coming from two different compactifications of $X$ are related.
The functoriality with $\C$-coefficients may be obtained by using logarithmic differential forms, see Proposition 3.9 in \cite{M} 
(where special attention is paid to the functoriality of the products involved).

Assume $X'$ is a (connected) curve and $f$ is non-constant (a particularly important case for applications, cf. \cite{A}). 
Then it is well known that the induced morphism $f^*:H^k(\overline{X'},\Q) \to H^k(\overline{X},\Q)$ is injective for all $k$, see 
for instance Lemma 7.28 in \cite{Vbook}. Using this, it is easy to see that 
$$f^*:\A^{\bullet}(\overline{X'},D') \to \A^{\bullet}(\overline{X},D)$$
is also injective in this case. Indeed, the only injectivity that remains to be checked is related to the points $b_i$, the irreducible components 
of $D'$. Consider the divisor $\overline{f}^{-1}(b_i)=\sum_{j \in A_i}m_jD_j$,
where the $D_j$'s are  the irreducible components of $D$ sitting over $b_i$, and all multiplicities $m_j$ are strictly positive integers.  
Then the morphism
$$f^*:\A^{0,1}(\overline{X'},D') \to \A^{0,1}(\overline{X},D)$$
is given by a direct sum of morphisms of type
$$\Q=H^0(b_i,\Q) \to \oplus_{j\in A_i}H^0(D_j,\Q)=\Q^{|A_i|}, ~~ a \mapsto (m_ja)_{j \in A_i},$$
the sum being taken over all the points $b_i$ in $D'$.

\end{example}

\begin{example}
\label{ex:mfds}
For a $C^{\infty}$ manifold $X$, we denote by $\Omega^{\bullet}_{DR} (X, \K)$ the $\sCDGA$ of usual, global $C^{\infty}$ forms.
Note that $\Omega^{\bullet}_{DR} (X, \K) \simeq_{\infty} \Omega^{\bullet} (X, \K)$, and 
the associated homology isomorphism preserves $\Q$--structures. 
\end{example}

\begin{example}
\label{ex:arr}
(cf. \cite{B}, \cite{OS}, \cite{OT}) Let $p=\prod_{i=1}^n p_i$ be a complex polynomial that splits into distinct degree one factors,
defining an affine {\em arrangement} of complex hyperplanes, $\bA$, with quasi--projective complement, $X_{\bA}= \{ p\ne 0\}$.
Denote by $\A^{\bullet}_{\bA}$ the {\em Orlik--Solomon algebra} of $\bA$. It is a $\sCDGA$ defined over $\Z$, with $d=0$,
whose underlying $\sCGA$ is the quotient of the exterior algebra generated in degree $1$, $\bigwedge^{\bullet}(h_1,\dots, h_n)$,
by an ideal depending only on the combinatorics of $\bA$. Plainly, both $X_{\bA}$ and $\A^{\bullet}_{\bA}\otimes \K$ are $\infty$--finite.

The $\sCGA$ map $\bigwedge^{\bullet}(h_1,\dots, h_n) \to \Omega^{\bullet}_{DR} (X_{\bA}, \C)$ defined by
$h_i \mapsto \frac{1}{2\pi \sqrt{-1}} \frac{dp_i}{p_i}$ induces an $\infty$--equivalence in $\sCDGA$, 
$f: \A^{\bullet}_{\bA}\otimes \C \to  \Omega^{\bullet}_{DR} (X_{\bA}, \C)$, with the property that $H^{\bullet}f$ preserves the
natural $\Z$--structures. In particular, $X_{\bA}$ is formal over $\Q$, and $\A^{\bullet}_{\bA}\otimes \Q$ has
positive weight (equal to degree).
\end{example}

\begin{example}
\label{ex:solv}
Let $\bS$ be a $1$--connected, solvable $\RR$--analytic Lie group, with Lie algebra $\fs$.
For a co-compact discrete subgroup $G\subseteq \bS$, let $X=\bS/G$ be the associated $\infty$--finite {\em solvmanifold}.
Left translation in $\bS$ gives rise to an inclusion in $\sCDGA$ between invariant forms,
\[
f: \CC^{\bullet}(\fs \otimes \K)=  \Omega^{\bullet}_{DR} (\bS, \K)^{\bS} \hookrightarrow  \Omega^{\bullet}_{DR} (\bS, \K)^G=
\Omega^{\bullet}_{DR} (X, \K) \, .
\]
When $\ad_x$ has only real eigenvalues, for every $x\in \fs$, $f$ is an $\infty$--equivalence \cite{Hat}. Consequently,
$\Omega^{\bullet} (X, \K) \simeq_{\infty} \CC^{\bullet}(\fs \otimes \K)$, where the $\sCDGA$ $\CC^{\bullet}(\fs \otimes \K)$
is $\infty$--finite.
\end{example}

\begin{example}
\label{ex:nilm}
Let $G$ be a finitely generated, torsion--free nilpotent group. By a classical result of Malcev \cite{Mal}, there is a finite--dimensional, 
nilpotent $\Q$--Lie algebra $E$, with the property that $G$ embeds as a co-compact discrete subgroup in the unipotent group
$\bS= \eexp (E\otimes \RR)$. Note that the associated {\em nilmanifold} $X=\bS/G$ is a $K(G,1)$. 

Since the condition on $\ad_x$, $x\in E\otimes \RR$, from Example \ref{ex:solv} is satisfied (by nilpotence), we infer that
$\Omega^{\bullet} (X, \K):= \Omega^{\bullet} (G, \K) \simeq_{\infty} \CC^{\bullet}(E \otimes \K)$. Note that $E$ is Quillen's
{\em Malcev $\Q$--Lie algebra} of $G$ \cite{Q}, and $\CC^{\bullet}(E)$ is Sullivan's minimal model $\M_{\infty}(\Omega^{\bullet}_{\Q} (G))$
\cite{S}. Conversely, it follows from \cite{S} and \cite{HMR} that, given any  finite--dimensional, 
nilpotent $\Q$--Lie algebra $E$, there is a finitely generated, torsion--free nilpotent group $G$, with $\infty$--finite $K(G,1)$, 
such that $\Omega^{\bullet} (G, \K) \simeq_{\infty} \CC^{\bullet}(E \otimes \K)$, and $\CC^{\bullet}(E \otimes \K)$ is
clearly $\infty$--finite. 
\end{example}

\begin{example}
\label{ex:nopos}
Let $E\in \Ob (\Q-\sLie)$ be finite--dimensional. By duality, $\CC^{\bullet}(E)$ has positive weights if and only if $E$ has positive weights,
that is, $E= \oplus_{j>0}  E^j$, and $[ E^j, E^{j'}]\subseteq E^{j+j'}$. These conditions impose non-trivial restrictions on the algebraic group
$\Aut_{\sLie}(E\otimes \C) \subseteq \GL (E\otimes \C)$. Indeed, let $\C^{\times} \to \Aut_{\sLie}(E\otimes \C)$ be the algebraic $1$--parameter
subgroup associated to the weight splitting. Letting $t\in \C^{\times}$ tend to $0$, positivity of the weights implies that $0\in \End_{\sLie}(E\otimes \C)$
belongs to the Zariski closure of $\Aut_{\sLie}(E\otimes \C)$. 

For example, let $E$ be the $2$--dimensional, non-abelian solvable $\Q$--Lie algebra with basis $\{ x,y \}$ and bracket $[y,x]=x$. 
The group  $\Aut_{\sLie}(E\otimes \C)$ consists of the matrices 
$\left( \begin{smallmatrix} 
a & c\\ 0& 1 \end{smallmatrix} \right)$, with $a\in \C^{\times}$ and $c\in \C$. Clearly, 
$\CC^{\bullet}(E)$ cannot have positive weights. 
\end{example}

\section{Monodromy and Malcev completion}
\label{sec:mono}

We make the first step in our analysis of analytic germs of representation varieties and cohomology jump loci. We reformulate
D. Sullivan's description of the Malcev completion of the fundamental group in terms of the $1$--minimal model, by
bringing into play flat connections and their monodromy representations. 

\subsection{Monodromy of flat connections}
\label{ss41}

We review the classical construction of the monodromy, following \cite{GM}. Let $M$ be a $C^{\infty}$ manifold, and
$\bG$ a $\K$--analytic Lie group, with Lie algebra $\fg$. 

Every $1$--form $\omega \in \Omega^1_{DR} (M, \K) \otimes \fg$ gives rise to a connection in the trivial principal bundle
$M\times \bG$. For a differentiable path $\sigma: \Delta_1 \to M$, we denote by $T_{\sigma}(\omega) \in \bG$ the
{\em parallel transport} of $1\in \bG$ along $\sigma$. Note that $T_{\sigma}(0)=1$, for all $\sigma$. When 
$\omega \in \F (\Omega_{DR} (M, \K) \otimes \fg)$, the {\em monodromy representation} $\mon (\omega): \pi_1(M, \pt) \to \bG$
sends the homotopy class of a $C^{\infty}$ loop, $[\sigma]$, to $T_{\sigma}(\omega)^{-1}$. This defines a group
homomorphism, where the product $[\sigma] \cdot [\tau]$ equals the juxtaposition $[\sigma \star \tau]$ (first $\sigma$, then $\tau$).

Monodromy is natural, in the following sense. Let $f:M \to M'$ be a differentiable map between pointed manifolds, and 
$\varphi: \bG \to \bG'$ a Lie homomorphism, with tangent map $\varphi_*: \fg \to \fg'$. Assume that 
$\omega \in \Omega^1_{DR} (M, \K) \otimes \fg$ and $\omega' \in \Omega^1_{DR} (M', \K) \otimes \fg'$ are {\em compatible}, i.e.,
\begin{equation}
\label{eq=comp}
(f^* \otimes \fg') (\omega')= ( \Omega^1_{DR} (M, \K) \otimes \varphi_*) (\omega) \, .
\end{equation}
Then $T_{f\circ \sigma}(\omega')= \varphi (T_{\sigma}(\omega))$, for any $C^{\infty}$ path $\sigma$ in $M$, and 
$\varphi \circ \mon (\omega)= \mon (\omega') \circ f_{\sharp}$, when both $\omega$ and $\omega'$ are flat, where
$f_{\sharp}: \pi_1(M, \pt) \to \pi_1(M', \pt)$ is the homomorphism induced on fundamental groups.

\subsection{Artin points of group schemes}
\label{ss42}

(cf. \cite{LM}, \cite{GM}) For a $\K$--linear algebraic group $\BB$ in characteristic $0$ (with Lie algebra $\bb$) and
$\Lambda \in \Ob (\sComm)$, we denote by $\BB (\Lambda)$ the bi-functorial group of $\Lambda$--points of $\BB$.
When $A\in \Ob (\sArt)$, $\BB (A)$ is $\K$--linear and contains the unipotent subgroup $\eexp (\bb \otimes \m_A)$ 
constructed in Example \ref{ex:lieart}:
\begin{equation}
\label{eq=bsemi}
\BB (A) =\eexp (\bb \otimes \m_A) \rtimes_{\Ad \otimes \m_A} \BB \, ,
\end{equation}
where $\Ad: \BB \to \Aut_{\sLie} (\bb)$ is the adjoint representation.

In particular, $T_{\sigma}(\omega) \in \eexp (\bb \otimes \m_A)$, if $\omega \in \Omega^1_{DR}(M, \K) \otimes \bb \otimes \m_A$,
and $\mon (\omega): \pi_1(M, \pt) \to \eexp (\bb \otimes \m_A)$ may be viewed as a representation in $\BB (A)$, when $\omega$
is flat. 

Furthermore, for $\psi \in \Hom_{\loc} (A, A')$ and $\omega'= \psi (\omega)$, $\mon (\omega')= \BB (\psi) \circ \mon(\omega)$, 
where $\BB (\psi): \BB (A) \to \BB (A')$ is the associated group homomorphism between Artin points, by naturality. 

\subsection{Topological monodromy}
\label{ss43}
(see e.g. \cite{GT}, for the case $\BB =\GL (V)$) Let $\BB$ be a linear algebraic group over $\RR$ or $\C$. The above considerations 
may be extended from $C^{\infty}$ manifolds to topological spaces, in the following way. Let $X\in \Ob (\sTop)$ be a path--connected 
pointed space, with fundamental group $G=\pi_1 (X, \pt)$ ( a notation that will be kept from now on).

Given $\omega\in \Omega^1 (X, \K)\otimes \bb$ and $\sigma: \Delta_1 \to X$ a continuous path, we denote by $T_{\sigma}(\omega) \in \BB$
the parallel transport along $\id_{\Delta_1}$, with respect to $\omega_{\sigma} \in \Omega^1_{DR} (\Delta_1, \K)\otimes \bb$:
$T_{\sigma}(\omega) := T_{\id_{\Delta_1}} (\omega_{\sigma})$. When $\omega \in \F (\Omega (X, \K)\otimes \bb)$, we obtain in this way
the {\em topological monodromy representation}, $\mon (\omega)\in \Hom_{\gr} (G, \BB)$: it sends the homotopy class of a loop, $[\sigma]$,
to $T_{\sigma}(\omega)^{-1}$. 

Let $f:X\to X'$ be a continuous pointed map, and $\varphi: \BB \to \BB'$ a morphism of linear algebraic groups. If 
$\omega \in \F (\Omega (X, \K)\otimes \bb)$ and $\omega' \in \F (\Omega (X', \K)\otimes \bb')$ satisfy the compatibility condition \eqref{eq=comp},
it is easy to check that $\varphi \circ \mon (\omega)= \mon (\omega') \circ f_{\sharp}$. 

For $A\in \Ob (\sArt)$ and $\omega \in \F_{\loc} (\Omega (X, \K)\otimes \bb \otimes A)$, 
$\mon (\omega)\in \Hom_{\gr}(G, \eexp (\bb \otimes \m_A)) \subseteq \Hom_{\gr}(G, \BB (A))$. If 
$\psi \in \Hom_{\loc} (A, A')$ and $\omega'= \psi (\omega)$, then $\mon (\omega')= \BB (\psi) \circ \mon(\omega)$.

\subsection{Sullivan's $1$--minimal model map}
\label{ss44}

We follow \cite{S}, see also \cite{CP}. We assume that $X$ is (up to homotopy) a connected CW--complex with $X^{(1)}$ finite
(hence, $G$ is finitely generated) and we work over $\K=\Q$. We recall from \S\ref{ss25} that $\Omega_{\Q}(X)$ and $\Omega_{\Q}(G)$
have the same $1$--minimal model, $\cN =(\bigwedge^{\bullet} W, d)$, with canonical filtration described in \S\ref{ss21}. Sullivan
constructs a $1$--minimal model map {\em adapted to $\pi_1$}, with the following properties.

Denote by $\pr_n: G\to G/G_n$ and $p_n: G/G_{n+1} \to G/G_n$ the canonical projections. There is a tower of commuting diagrams
in $\sCDGA$, 
\begin{equation}
\label{eq=stower}
\xymatrix{
\CC^{\bullet} (E^n)  \ar[rr]^{f^n}
\ar[d]^{q^*_n} 
&& \Omega^{\bullet}_{\Q}(G/G_n)
\ar[d]^{p^*_n}   \\
\CC^{\bullet} (E^{n+1}) \ar[rr]^{f^{n+1}}
&& \Omega^{\bullet}_{\Q}(G/G_{n+1})
}
\end{equation}
where each $f^n$ is a minimal model map. The composition
\[
f_1: \cN = \lim \limits_{\rightarrow} \CC^{\bullet} (E^n)
\stackrel{\lim \limits_{\rightarrow} f^n}{\longrightarrow}
\lim \limits_{\rightarrow} \Omega^{\bullet}_{\Q}(G/G_n)
\stackrel{\lim \limits_{\rightarrow} \pr_n^*}{\longrightarrow}
\Omega^{\bullet}_{\Q}(G) \stackrel{c^*}{\rightarrow} \Omega^{\bullet}_{\Q}(X)
\]
is a $1$--minimal model map (where $c$ denotes a classifying map).

The Lie tower \eqref{eq=ltower} begins with $E^1=0$ and the abelian Lie algebra $E^2=G_{\ab} \otimes \Q$
(where $G_{\ab}:= G/G_2$), hence $\CC^{\bullet} (E^2) = (\bigwedge^{\bullet} H^1(G, \Q), d=0)$. Moreover, $H^1 f^2$ 
is the identity of $H^1(G, \Q)= H^1(G_{\ab}, \Q)$.

Denote by $\gr_{\bullet}(G) =\oplus_{n\ge 1} G_n/G_{n+1}$ the {\em associated graded Lie algebra} of a group $G$, with
Lie bracket induced by the group commutator. By construction, both $\gr_{\bullet}(G)$ and $\gr_{\bullet}(G) \otimes \Q$ are
generated in degree $1$, as Lie algebras. Likewise, let $\gr_{\bullet}(E) =\oplus_{n\ge 1} E_n/E_{n+1}$ be the 
associated graded Lie algebra (generated in degree $1$) of a Lie algebra $E$, with bracket induced from $E$. When $E$ is
finite--dimensional nilpotent, we have a natural identification between $\gr_{\bullet}(\eexp (E))$ and $\gr_{\bullet}(E)$, as
graded Lie algebras, which follows from the discussion in \S\ref{ss22} and standard properties of the Campbell--Hausdorff
multiplication.

With this notation, we also have $\Q$--vector space isomorphisms,
\begin{equation}
\label{eq=griso}
\ker (q_n) \cong \gr_n (G)\otimes \Q \, ,
\end{equation}
for all $n$. It follows that we get central extensions of uniquely divisible nilpotent groups,
\[
1 \to \gr_n (G)\otimes \Q \longrightarrow \eexp (E^{n+1}) \stackrel{q_n}{\longrightarrow} \eexp (E^{n}) \to 1 \, ,
\]
and central extensions of nilpotent groups,
\[
1 \to \gr_n (G)  \longrightarrow G/G_{n+1}  \stackrel{p_n}{\longrightarrow} G/G_n \to 1 \, .
\]

\subsection{Associated flat connections}
\label{ss45}

We recall from \S\ref{ss13} that, for each $n$, we have the distinguished element $\omega_{E^n}\in \F (\CC^{\bullet} (E^n) \otimes E^n)$.
Set $\omega_n := f^n (\omega_{E^n}) \in \F (\Omega^{\bullet}_{\Q}(G/G_n)\otimes E^n) \subseteq 
\F (\Omega^{\bullet} (G/G_n, \K)\otimes E^n \otimes \K)$. 

By \S\ref{ss43}, there is an associated representation, $\kk_n := \mon (\omega_n) : G/G_n \to \eexp (E^n \otimes \K)$. Since $\omega_n$ 
is defined over $\Q$, and monodromy representations can be described in terms of iterated integrals (cf. \cite{C}, \cite{Ha1}), it turns out that actually
$\kk_n \in \Hom_{\gr}( G/G_n, \eexp (E^n))$. 

In addition, it is straightforward to  infer from \eqref{eq=stower} that the flat connections $\omega_{n+1}$ and $\omega_n$ satisfy the compatibility condition
\eqref{eq=comp}, with respect to $p_n: K(G/G_{n+1},1) \to K(G/G_{n},1)$ and $q_n : \eexp (E^{n+1} \otimes \K) \to \eexp (E^{n} \otimes \K)$. 
As remarked in \S\ref{ss43}, this gives a tower of commuting diagrams of group homomorphisms,
\begin{equation}
\label{eq=qtower}
\xymatrix{
G/G_{n+1} \ar[rr]^{\kk_{n+1}}
\ar[d]^{p_n} 
&& \eexp (E^{n+1})
\ar[d]^{q_n}   \\
G/G_n \ar[rr]^{\kk_n}
&& \eexp (E^{n})
}
\end{equation}
By \eqref{eq=griso}, restriction to the kernels of the vertical epimorphisms gives a homomorphism
\[
\kk_{n+1}' : \gr_n (G) \rightarrow \gr_n (G)\otimes \Q \, .
\]

\subsection{Abelian monodromy} 
\label{ss46}

Let us assume in \S\ref{ss43} that $\BB$ is abelian. Hence, the Lie algebra $\bb$ is abelian, and the exponential map 
$\eexp_{\BB} : \bb \to \BB$ is a morphism of $\K$--analytic Lie groups, where $\bb$ has the additive group structure.

In this case, $\F (\Omega (X, \K) \otimes \bb)= Z^1 \Omega (X, \K) \otimes \bb$, by \eqref{eq=mc}, where $Z^1$ denotes 
$1$--cocycles. For  $\omega \in \F (\Omega (X, \K) \otimes \bb)$, denote by $[\omega]\in \Hom_{\gr} (G, \K) \otimes \bb$
its cohomology class. By a standard identification, $[\omega]\in  \Hom_{\gr}(G, \bb)$, so $ \eexp_* [\omega]\in  \Hom_{\gr}(G, \BB)$.

In the abelian case, the differential equation defining the parallel transport can be easily integrated in explicit form, by using the
exponential map of $\BB$. As a result, 
\begin{equation}
\label{eq=abtr}
\mon (\omega) =\eexp_* [\omega]\, , \forall \omega \in \F (\Omega (X, \K) \otimes \bb) \, .
\end{equation}

\begin{example}
\label{ex:abexp}
For $\BB =\eexp (E)$, it follows from \S\ref{ss22} that $\eexp_{\BB}=\id_E$. When the Lie algebra $E$ is abelian, clearly $\eexp (E)$ is
the additive group of $E$. In the particular case from \S\ref{ss44}, when $X=K(G_{\ab}, 1)$ and $E=E^2$, the property $H^1f^2=\id$ and 
\eqref{eq=abtr} together imply that $\kk_2 =\mon (\omega_2) : G_{\ab} \to G_{\ab} \otimes \Q$ is the canonical map. In particular, $\kk_2$
is a $\Q$--Malcev completion map \cite{Q}.

In the rank one case, when $\BB =\C^{\times}$, $\eexp_{\BB}$ sends $z\in \C$ to the usual exponential, $e^z \in \C^{\times}$.
\end{example}

\subsection{Sullivan's theorem} 
\label{ss47}

Let us first analyze, for $n\ge2$, the graded Lie maps from \S\ref{ss44}, 
\begin{equation}
\label{eq=grk}
\gr_{\bullet} (\kk_n) \otimes \Q : \gr_{\bullet} (G/G_n) \otimes \Q \rightarrow  \gr_{\bullet} (E^n)\, .
\end{equation}

\begin{lemma}
\label{lem:filtlcs}
For $r\le n$, the map $E^n \to E^r$ from \eqref{eq=ltower} is the canonical projection, $E^n \surj E^n/E^n_r$. 
\end{lemma}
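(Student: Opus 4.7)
The plan is to prove by induction on $n$ that the kernel $\Phi^r\subseteq E^n$ of the map $E^n\to E^r$ from \eqref{eq=ltower}, which by construction is dual to the inclusion $F^r\hookrightarrow F^n$ and hence equals the annihilator $(F^r)^{\perp}$, coincides with the lower central series term $E^n_r$. The easy inclusion $E^n_r\subseteq\Phi^r$ follows from the explicit dualization $\langle[x,y],w\rangle=-\langle x\wedge y,dw\rangle$ derived in the proof of Lemma~\ref{lem:cflat}, combined with the defining property $d(F^r)\subseteq\bigwedge^2 F^{r-1}$ of the canonical filtration: together they give $[\Phi^{r-1},E^n]\subseteq\Phi^r$, and iteration from $\Phi^1=E^n$ yields $E^n_r\subseteq\Phi^r$.

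For the reverse inclusion $\Phi^r\subseteq E^n_r$ I would induct on $n$; the bases $n\le 2$ are immediate since $E^2$ is abelian (as $d=0$ on $F^2=H^1\cN$). For the inductive step, it suffices to establish the single identity $\ker q_n=E^{n+1}_n$: the general case $\Phi^r=E^{n+1}_r$ then follows from a diagram chase through $E^{n+1}\stackrel{q_n}{\surj}E^n\to E^r$, using the inductive hypothesis for $E^n$ together with $E^{n+1}_n\subseteq E^{n+1}_r$ for $r\le n$. One direction, $E^{n+1}_n\subseteq\ker q_n$, is immediate from $q_n(E^{n+1}_n)=E^n_n=0$. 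For the reverse $\ker q_n\subseteq E^{n+1}_n$, the key input is the minimality of the canonical filtration: the defining equality $F^n=(d|_W)^{-1}(\bigwedge^2 F^{n-1})$ forces the induced map $\bar d\colon F^{n+1}/F^n\to\bigwedge^2 F^n/\bigwedge^2 F^{n-1}$ to be injective, and dualizing produces a surjection
\[
\phi\colon K:=\ker\bigl(\textstyle\bigwedge^2 q_{n-1}\colon\bigwedge^2 E^n\to\bigwedge^2 E^{n-1}\bigr)\surj\ker q_n,
\]
which coincides, up to vector-space splittings, with the restriction to $K$ of the classifying $2$-cocycle of the central extension $0\to\ker q_n\to E^{n+1}\stackrel{q_n}{\surj}E^n\to 0$.

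Using the inductive identification $\ker q_{n-1}=E^n_{n-1}$, a vector-space splitting of $q_{n-1}$ decomposes $K$ as $(E^{n-1}\wedge E^n_{n-1})\oplus\bigwedge^2 E^n_{n-1}$. On the first summand $\phi(x\wedge c)$ equals the bracket $[\widetilde x,\widetilde c]_{E^{n+1}}$ of any lifts, because the naively expected correction---a lift of $[x,c]_{E^n}$---vanishes by the centrality of $E^n_{n-1}=\ker q_{n-1}$ in $E^n$. Now $c$ is a sum of $(n-1)$-fold brackets in $E^n$ and $\ker q_n$ is central in $E^{n+1}$, so any lift $\widetilde c$ differs from a genuine $(n-1)$-fold bracket in $E^{n+1}$ only by a central element; hence $[\widetilde x,\widetilde c]_{E^{n+1}}\in[E^{n+1},E^{n+1}_{n-1}]\subseteq E^{n+1}_n$. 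An analogous computation on $\bigwedge^2 E^n_{n-1}$ lands in $[E^{n+1}_{n-1},E^{n+1}_{n-1}]\subseteq E^{n+1}_{2n-2}\subseteq E^{n+1}_n$. Thus $\ker q_n=\phi(K)\subseteq E^{n+1}_n$, and $E^{n+1}_{n+1}=[E^{n+1},\ker q_n]=0$ by centrality, completing both parts of the induction.

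The main obstacle is the translation of the CDGA-side minimality $F^n=(d|_W)^{-1}(\bigwedge^2 F^{n-1})$ into the Lie-theoretic inclusion $\ker q_n\subseteq E^{n+1}_n$; the crux is to identify the dual of $\bar d$ with the classifying cocycle of the central extension, and then to check summand by summand that the resulting brackets in $E^{n+1}$ already land as deep as $E^{n+1}_n$ in the lower central series.
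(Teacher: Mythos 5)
Your proof is correct, but it takes a genuinely more self-contained route than the paper's. The paper argues in two short steps: an induction on $r$ showing that the canonical filtration of the truncated algebra $\CC^{\bullet}(E^n)=(\bigwedge^{\bullet}F^n,d)$ satisfies $F^r(F^n)=F^r$ for $r\le n$, followed by an appeal to the known duality $F^r(F^n)=(E^n_r)^{\perp}$ between the canonical filtration of a Chevalley--Eilenberg algebra and the lower central series of the underlying nilpotent Lie algebra, which it simply cites from \cite[Lemma~1.1]{MP}. You instead re-prove that duality from scratch: the easy inclusion $E^n_r\subseteq (F^r)^{\perp}$ by dualizing $d(F^r)\subseteq\bigwedge^2F^{r-1}$, and the converse by induction on $n$ through the tower of central extensions, where the minimality condition $F^{n+1}=(d|_W)^{-1}(\bigwedge^2F^n)$ is dualized into the surjection $\phi\colon\ker(\bigwedge^2q_{n-1})\surj\ker q_n$ and identified (up to sign) with the bracket of lifts, whence $\ker q_n\subseteq E^{n+1}_n$. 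The steps all check out: $\phi$ is indeed induced by the bracket of $E^{n+1}$ because that bracket factors through $\bigwedge^2 q_n$ (as $d(F^{n+1})\subseteq\bigwedge^2F^n$), the splitting of $\ker(\bigwedge^2q_{n-1})$ into $E^{n-1}\wedge E^n_{n-1}$ and $\bigwedge^2E^n_{n-1}$ is legitimate once the inductive hypothesis gives $\ker q_{n-1}=E^n_{n-1}$, and the depth estimates $[E^{n+1},E^{n+1}_{n-1}]\subseteq E^{n+1}_n$ and $[E^{n+1}_{n-1},E^{n+1}_{n-1}]\subseteq E^{n+1}_{2n-2}\subseteq E^{n+1}_n$ are valid for $n\ge 2$, which is exactly the range of the inductive step. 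What the paper's version buys is brevity, at the cost of outsourcing the essential duality to an external reference; what yours buys is a complete argument that, as a by-product, re-derives $E^n_n=0$ from the easy inclusion rather than quoting it.
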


\begin{proof}
Recall from \S\ref{ss21} that $\CC^{\bullet} (E^n)= (\bigwedge^{\bullet} F^n ,d)$, where $\{ F^n \}_{n\ge 1}$ is 
the canonical filtration of $\cN =(\bigwedge^{\bullet} W ,d)$. Denote by $\{ F^r (F^n) \}_{r\ge 1}$ the canonical filtration of 
$\CC^{\bullet} (E^n)$. A straightforward induction on $r$ shows that $F^r (F^n) = F^r$, for $r\le n$. On the other hand, we 
may infer from duality that $F^r (F^n) =(E^n_r)^{\perp}$, the subspace of $E^{n*}=F^n$ orthogonal to $E^n_r \subseteq E^n$;
see e.g. \cite[Lemma 1.1]{MP}. Therefore, $\ker (E^n \surj E^r)= E^n_r$, again by duality.
\end{proof}

\begin{lemma}
\label{lem:grkiso}
The maps \eqref{eq=grk} are isomorphisms, for $n\ge 2$.
\end{lemma}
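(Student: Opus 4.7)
My plan is to prove the lemma by induction on $n\ge 2$. The base case $n=2$ is essentially free from Example \ref{ex:abexp}: there, $\kk_2$ is identified with the canonical rationalization $G_{\ab}\to G_{\ab}\otimes\Q$, and since both graded Lie algebras $\gr_{\bullet}(G/G_2)\otimes\Q$ and $\gr_{\bullet}(E^2)$ live in degree $1$ alone and are equal to $G_{\ab}\otimes\Q$ there, the induced map $\gr_{\bullet}(\kk_2)\otimes\Q$ is the identity.

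For the inductive step, assume the statement is known for $\kk_n$. The idea is to apply the associated graded functor (tensoring the group side with $\Q$) to the commutative square \eqref{eq=qtower}. The vertical projections $p_n$ and $q_n$ are surjections of nilpotent objects; their kernels are $\gr_n(G)=G_n/G_{n+1}$ and $\ker(q_n)=E^{n+1}_n=\gr_n(E^{n+1})$ respectively, the second identification being Lemma \ref{lem:filtlcs} combined with the nilpotency identity $E^{n+1}_{n+1}=0$ recorded in \S\ref{ss21}. Taking associated gradeds produces a morphism of short exact sequences of graded $\Q$-vector spaces in which the middle vertical arrow is $\gr_{\bullet}(\kk_{n+1})\otimes\Q$, the right-hand arrow is $\gr_{\bullet}(\kk_n)\otimes\Q$ (an isomorphism by induction), and the left-hand arrow is $\kk'_{n+1}\otimes\Q$ by the very definition of $\kk'_{n+1}$ in \S\ref{ss45}. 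The short five-lemma, applied degree by degree, then reduces the problem to showing that $\kk'_{n+1}\otimes\Q$ is an isomorphism.

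To execute this last reduction, I would exploit that both kernels are central in their ambient nilpotent groups, being the last filtration step in lower central series of class at most $n$. On such a central piece, the Campbell--Hausdorff multiplication of \S\ref{ss22} degenerates to addition, and the topological monodromy computation for $\omega_{n+1}$ is governed by the abelian formula \eqref{eq=abtr}. Concretely, $T_\sigma(\omega_{n+1})$ reduces modulo the subconnection lifted via $q_n$ from $\omega_n$ to the integral of the degree-$n$ component of $f^{n+1}(\omega_{E^{n+1}})$ against a cycle representative of a length-$n$ iterated commutator; via the construction of the adapted tower in \S\ref{ss44}, this matches the dualization of $d\colon F^{n+1}/F^n\to\wedge^2 F^n$ coming from Lemma \ref{lem:filtlcs}. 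Thus $\kk'_{n+1}\otimes\Q$ realizes the canonical identification \eqref{eq=griso}, and in particular is an isomorphism.

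The main obstacle I expect is making this last paragraph precise without circularly appealing to \eqref{eq=griso} as a definition rather than a general fact. The cleanest route is probably to replace the loop-evaluation of monodromy by Chen's iterated-integral formula, noting that on a central element the only surviving contribution is the degree-$n$ iterated integral, and then to invoke naturality along the tower $\{f^n\}$ to match this with the dual of the differential of the $1$-minimal model.
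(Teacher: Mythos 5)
Your base case and your five-lemma reduction are both sound: applying $\gr_{\bullet}(\cdot)\otimes\Q$ to diagram \eqref{eq=qtower} does reduce the inductive step to showing that $\kk'_{n+1}\otimes\Q$ is an isomorphism. But the final paragraph, where that claim is supposed to be established, is a genuine gap, and you have correctly diagnosed why: \eqref{eq=griso} is only a statement that the two $\Q$--vector spaces $\ker(q_n)$ and $\gr_n(G)\otimes\Q$ have the same dimension (it is part of Sullivan's construction of the adapted model), not a statement that the monodromy map $\kk'_{n+1}$ realizes that identification. Proving the latter by iterated integrals amounts to reproving a substantial piece of Chen's $\pi_1$--de Rham theory, and the pairing you would need to be perfect (between $n$--fold commutators in $G$ and the degree--$n$ layer $F^{n+1}/F^n$ of the canonical filtration) is essentially the assertion under proof. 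In the paper the logic runs the other way: the fact that $\kk'_{n+1}\otimes\Q$ is an isomorphism is \emph{deduced} from Lemma \ref{lem:grkiso} in the proof of Theorem \ref{thm:pi1dr}, so you cannot use it as an input without an independent argument, which your sketch does not supply.

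The idea you are missing is that $\gr_{\bullet}(\kk_n)\otimes\Q$ is a morphism of \emph{graded Lie algebras}, and both source and target are generated in degree $1$ (for $\gr_{\bullet}(G/G_n)\otimes\Q$ by construction of the lower central series, for $\gr_{\bullet}(E^n)\cong\gr_{\bullet}(\eexp(E^n))$ by the corresponding fact for Lie algebras, cf. \S\ref{ss44}). Since $\gr_1(\kk_n)\otimes\Q$ is an isomorphism (by the $n=2$ case together with the tower diagrams), degree--one generation forces $\gr_{\bullet}(\kk_n)\otimes\Q$ to be surjective in every degree. Injectivity then follows from a pure dimension count: for $r<n$ one has $\gr_r(G/G_n)\otimes\Q\cong\gr_r(G)\otimes\Q\cong\ker(q_r)\cong\gr_r(E^n)$, using \eqref{eq=griso} only as a dimension equality together with Lemma \ref{lem:filtlcs}, and for $r\ge n$ both sides vanish. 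This route never needs to identify which map realizes \eqref{eq=griso}, which is exactly what lets the argument avoid the circularity you were worried about. If you want to keep your five-lemma framework, you could still close the gap the same way: surjectivity of $\kk'_{n+1}\otimes\Q$ follows from degree--one generation, and injectivity from the dimension equality; but at that point the five lemma is no longer doing any work.
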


\begin{proof}
Clearly, the canonical map $G/G_n \surj G/G_2$ induces an isomorphism on $\gr_1$. Likewise, $E^n \surj E^2$ 
induces an isomorphism on $\gr_1$, by Lemma \ref{lem:filtlcs}. We infer from Example \ref{ex:abexp} that
$\gr_1 (\kk_2)\otimes \Q$ is the identity. Using diagrams \eqref{eq=qtower}, we find that  $\gr_1 (\kk_n)\otimes \Q$
is an isomorphism. Consequently, $\gr_{\bullet} (\kk_n)\otimes \Q$ is an epimorphism of graded Lie algebras.

Plainly, $\gr_r (G/G_n)=0$ for $r\ge n$. Similarly, $\gr_r (E^n)=0$ for $r\ge n$, since $E^n_n=0$. Again by
Lemma \ref{lem:filtlcs}, $\gr_r (E^n) \cong \ker (q_r : E^{r+1} \to E^r)$, if $r<n$. When $r<n$, 
$\gr_r(G/G_n) \otimes \Q \cong \gr_r(G) \otimes \Q \cong \ker (q_r)$; see \eqref{eq=griso}. 
The conclusion follows from a dimension argument.
\end{proof}

\begin{theorem}[\cite{S}]
\label{thm:pi1dr}
Let $G$ be a finitely generated group, with $1$--minimal model map adapted to $\pi_1$ as described in \S \ref{ss44}.
Then the monodromy representation $$\kk_n  \in \Hom_{\gr} (G/G_n , \eexp (E^n))$$ constructed in \S \ref{ss45} is a
$\Q$--Malcev completion map, for all $n\ge 2$.
\end{theorem}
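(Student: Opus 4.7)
The plan is to reduce the theorem to Lemma \ref{lem:grkiso} via the standard graded characterization of Malcev completion. Recall from Quillen \cite{Q} that, for a finitely generated nilpotent group $N$, a homomorphism $\kappa : N \to U$ into a uniquely divisible nilpotent group of matching nilpotency class is a $\Q$-Malcev completion map if and only if the induced map $\gr_\bullet(\kappa) \otimes \Q$ is an isomorphism of graded Lie algebras.

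First I would verify the target side. By \S\ref{ss22}, each group $\eexp(E^n)$ is uniquely divisible nilpotent, and $\eexp(E^n)_r = \eexp(E^n_r)$; in particular the associated graded $\gr_\bullet(\eexp(E^n))$ agrees with $\gr_\bullet(E^n)$ as a graded Lie algebra. Both $G/G_n$ and $\eexp(E^n)$ have nilpotency class at most $n-1$, the latter because $E^n_n = 0$ in the Lie tower \eqref{eq=ltower}. With this in hand, Lemma \ref{lem:grkiso}, which asserts that $\gr_\bullet(\kk_n) \otimes \Q$ is an isomorphism for $n \ge 2$, supplies exactly the hypothesis of the criterion, and the conclusion follows at once.

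An alternative, more hands-on route is induction on $n$, exploiting the commutative diagram \eqref{eq=qtower}. The base case $n = 2$ is Example \ref{ex:abexp}, where $\kk_2$ is the canonical rationalization $G_{\ab} \to G_{\ab} \otimes \Q$, evidently the Malcev completion of the finitely generated abelian group $G_{\ab}$. For the inductive step, \eqref{eq=qtower} becomes a morphism of central extensions; the map on kernels is identified via \eqref{eq=griso} with the rationalization $\gr_n(G) \to \gr_n(G) \otimes \Q$ (again a Malcev completion, since $\gr_n(G)$ is finitely generated abelian), while the map on quotients is a Malcev completion by inductive hypothesis. One concludes using the fact that the class of Malcev completion maps is closed under such central extensions, which is proved by applying the universal property one step at a time and lifting through the uniquely divisible extension.

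The theorem is essentially a corollary of Lemma \ref{lem:grkiso} together with the classical algebraic characterization of Malcev completion, so no serious obstacle remains. The one point requiring mild care is not to conflate ``uniquely divisible as a Lie group'' with ``uniquely divisible as an abstract group''; that $\eexp(E^n)$ has the latter property is built into the Campbell--Hausdorff description recalled in \S\ref{ss22}. All genuine content has already been absorbed into the graded computation of Lemma \ref{lem:grkiso}.
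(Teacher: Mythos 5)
Your proposal is correct and follows essentially the same route as the paper: the paper's proof is exactly your second (inductive) argument — base case $n=2$ from Example \ref{ex:abexp}, the morphism of central extensions \eqref{eq=qtower}, Lemma \ref{lem:grkiso} (via Lemma \ref{lem:filtlcs}) to identify the map on kernels with the rationalization $\gr_n(G)\to\gr_n(G)\otimes\Q$, and the $5$--Lemma for Malcev completion \cite[Corollary 2.6]{HMR} — while your first route is just the packaged form of that same induction. The only point to keep explicit is that identifying the kernel map with the rationalization genuinely uses that $\gr_n(\kk_{n+1})\otimes\Q$ is an isomorphism (Lemma \ref{lem:grkiso}), not merely the vector--space identification \eqref{eq=griso}.
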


\begin{proof}
For $n=2$, we have seen this in Example \ref{ex:abexp}. Assume by induction that $\kk_n$ 
is a $\Q$--Malcev completion map, in \eqref{eq=qtower}. We deduce from Lemma \ref{lem:grkiso} that 
$\gr_n (\kk_{n+1}) \otimes \Q$ is an isomorphism. This map in turn may be easily identified with $\kk_{n+1}' \otimes \Q$,
where $\kk_{n+1}'$ is the restriction of  $\kk_{n+1}$ to the kernels from \eqref{eq=qtower}; see Lemma \ref{lem:filtlcs}.
We thus see that the map $\kk_{n+1}' : \gr_n (G) \to \gr_n (G) \otimes \Q$ is a $\Q$--Malcev completion map \cite{Q}. By
the 5--Lemma in $\Q$--Malcev completion \cite[Corollary 2.6]{HMR}, applied to diagram \eqref{eq=qtower}, $\kk_{n+1}$ is also
a $\Q$--Malcev completion map.  
\end{proof}

\begin{remark}
\label{rem:adapt}
Recall from \S\ref{ss44} the $\pi_1$--adapted $1$--minimal model map over $\Q$, $f_1: \cN \to \Omega_{\Q} (X)$. For
$\K =\RR$ or $\C$, we may tensor it with $\K$ to obtain a $1$--minimal model map over $\K$,
$f_1 \otimes \K : \cN \otimes \K \to \Omega_{\K} (X) \subseteq \Omega (X, \K)$. We shall simply denote it by
$f_1: \cN \to \Omega (X, \K)$, viewing $(\bigwedge^{\bullet} W, d)$ as a $\Q$--structure of $\cN$. We call this $f_1$
a $1$--minimal model map adapted to $\pi_1$, over $\K$. 

As recalled in \S \ref{ss23}, we may extend $f_1$ to a $q$--minimal model map defined over $\Q$,
$f: \M \to \Omega (X, \K)$. This is the $q$--minimal model map that we are going to use, cf. \S \ref{ss26}.

\end{remark}

\subsection{Unipotent completion}
\label{ss48}

(cf. \cite{Q}, \cite{Ha}) We will need to upgrade the conclusion of Theorem \ref{thm:pi1dr}, over 
an arbitrary characteristic $0$ field $\K$. 
To this end, we start by recalling from \cite[\S3]{Ha} a couple of relevant facts about Quillen's {\em $\K$--unipotent completion} 
of a finitely generated group $G$. 

Quillen constructs a tower of finite-dimensional nilpotent $\Q$--Lie algebras, 
$\{ \fp^{n+1} \stackrel{Q_n}{\longrightarrow} \fp^{n} \}$, and a compatible family of group homomorphisms,
$\{ M_n : G \to \eexp (\fp^n) \}$. Each $M_n$ factors through the canonical projection $\pr_n$ and gives a $\Q$--Malcev completion map,
$M_n : G/G_n \to \eexp (\fp^n)$. This construction has another two key properties, for any characteristic $0$ field $\K$. Firstly, the
representations $M_n (\K) : G \to \eexp (\fp^n \otimes \K)$ have Zariski dense images. Secondly, for any group homomorphism into 
a $\K$--unipotent group, $\varphi \in \Hom_{\gr}(G,U)$, there is $\Phi \in  \Hom_{\alggr}(\eexp (\fp^n \otimes \K),U)$, for some $n$,
such that $\varphi =\Phi \circ M_n (\K)$. 

Consider now the tower of $\Q$--unipotent groups, $\{ q_n : \eexp (E^{n+1}) \to \eexp(E^n) \}$,  together with the compatible family of
group homomorphisms $\{ \kk_n \circ \pr_n \}$ from Theorem \ref{thm:pi1dr}. The universality property of $\Q$--Malcev completion,
together with Zariski density, imply that the $\K$--algebraic towers $\{ Q_n (\K): \eexp (\fp^{n+1} \otimes \K) \to \eexp (\fp^{n} \otimes \K) \}$
and $\{ q_n (\K): \eexp (E^{n+1} \otimes \K) \to \eexp (E^{n} \otimes \K) \}$ are isomorphic, with $\{ M_n (\K) \}$ identified with
$\{ \mon (\omega_n) \circ \pr_n \in \Hom_{\gr} (G, \eexp (E^n \otimes \K)) \}$. 

We infer that Theorem \ref{thm:pi1dr} actually gives the $\K$--unipotent completion of $G$.

\begin{corollary}
\label{cor:kunip}
In Theorem \ref{thm:pi1dr}, the map
\[
\lim \limits_{\longrightarrow} \Hom_{\alggr} (\eexp (E^n\otimes \K), U) \rightarrow \Hom_{\gr} (G, U)\, ,
\]
sending $h_n$ to $h_n \circ \mon (\omega_n)\circ \pr_n$, is a natural bijection, for every $\K$--unipotent group $U$.
\end{corollary}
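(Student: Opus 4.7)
The plan is to bootstrap from the discussion immediately preceding the statement, which identifies the Sullivan tower $\{\eexp(E^n \otimes \K), q_n(\K)\}$ equipped with the compatible system $\{\mon(\omega_n) \circ \pr_n\}$ with Quillen's $\K$-unipotent completion tower $\{\eexp(\fp^n \otimes \K), Q_n(\K)\}$ equipped with $\{M_n(\K)\}$. Granted this identification, the corollary becomes a direct reformulation of Quillen's two characterizing properties: universal factorization of any $\varphi \in \Hom_{\gr}(G, U)$ through some $M_n(\K)$ via a morphism of algebraic groups, and Zariski density of each image $M_n(\K)(G)$ inside $\eexp(\fp^n \otimes \K)$.

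For surjectivity, given $\varphi \in \Hom_{\gr}(G, U)$ I would invoke Quillen's factorization to produce some index $n$ and $\Phi \in \Hom_{\alggr}(\eexp(\fp^n \otimes \K), U)$ with $\varphi = \Phi \circ M_n(\K)$, and then transport $\Phi$ across the tower isomorphism to obtain an $h_n \in \Hom_{\alggr}(\eexp(E^n \otimes \K), U)$ satisfying $\varphi = h_n \circ \mon(\omega_n) \circ \pr_n$. For injectivity, suppose $h_n$ and $h_m$ represent elements of the directed system with identical image in $\Hom_{\gr}(G, U)$. After composing with the transition maps of the tower, I may push both to a common stage and reduce to the case $n = m$. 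The resulting equality $h_n \circ \mon(\omega_n) \circ \pr_n = h_n' \circ \mon(\omega_n) \circ \pr_n$, combined with surjectivity of $\pr_n$, forces $h_n$ and $h_n'$ to agree on $\mon(\omega_n)(G/G_n) \subseteq \eexp(E^n \otimes \K)$. By the Zariski density property this subset is dense, so $h_n = h_n'$ as morphisms of algebraic groups.

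The only substantive point — and the main obstacle I would need to state cleanly — is the tower identification itself. This rests on uniqueness of $\Q$-Malcev completion (applied on the Sullivan side to Theorem \ref{thm:pi1dr}, which produces $\kk_n$ as the Malcev completion of $G/G_n$, and on the Quillen side to the factorization of $M_n$ through $\pr_n$) together with compatibility of base change from $\Q$ to $\K$ for unipotent algebraic groups. Naturality of the bijection in $U$ is then automatic, as every step of the construction is functorial in $U$.
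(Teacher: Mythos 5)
Your proposal is correct and follows essentially the same route as the paper: the paper derives the corollary directly from the identification of the Sullivan tower $\{q_n(\K)\}$ with Quillen's tower $\{Q_n(\K)\}$ (obtained via uniqueness of $\Q$--Malcev completion and Zariski density) together with Quillen's two characterizing properties, exactly as you do. Your write-up merely makes explicit the surjectivity/injectivity bookkeeping that the paper compresses into the phrase ``we infer that Theorem \ref{thm:pi1dr} actually gives the $\K$--unipotent completion of $G$.''
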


\begin{remark}
\label{rem:malunip}
When $U= \eexp (F)$, where $F$ is a finite--dimensional nilpotent $\K$--Lie algebra, the left hand side of the above bijection
is  identified with $\lim \limits_{\rightarrow} \Hom_{\sLie}(E^n \otimes \K, F)$ in a natural way; see \S\ref{ss22}. In particular, it
depends only on $F$ and the $1$--minimal model of $X$ over $\K$, since the Lie tower $\{ q_n (\K) \}$ is dual to the canonical
filtration of the $1$--minimal model (cf. \S\ref{ss21}).
\end{remark}

We are now ready to establish the following result, that relates representations with unipotent target and flat connections.
Let $X$ be a connected CW--complex with finite $1$--skeleton and fundamental group $G$. Let $f_1 : \cN \to \Omega_{\K} (X)$ 
be a $\pi_1$--adapted $1$--minimal model map over $\K$ (compare with Remark \ref{rem:adapt}).

\begin{prop}
\label{prop:fbij}
For any finite--dimensional nilpotent $\K$--Lie algebra $F$, the composition below is a bijection
\[
\F (\cN \otimes F) \stackrel{f_1}{\longrightarrow} \F (\Omega_{\K} (X) \otimes F)  \stackrel{\mon}{\longrightarrow}
\Hom_{\gr}(G, \eexp (F)) \, .
\]
\end{prop}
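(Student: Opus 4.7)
The plan is to reduce the statement to Corollary \ref{cor:kunip} by exploiting the fact that flat connections valued in a finite-dimensional Lie algebra can be reinterpreted as $\sCDGA$ maps out of $\CC^\bullet(F)$, and that such maps out of $\cN$ automatically factor through a finite stage of the canonical filtration.

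First I would unpack the left-hand side. By Lemma \ref{lem:cflat}, $\F(\cN \otimes F) = \Hom_{\sCDGA}(\CC^\bullet(F), \cN)$. Now $\CC^\bullet(F) = \bigwedge^\bullet F^*$ is freely generated in degree $1$ by the finite-dimensional space $F^*$, while $\cN$ has $\cN^1 = W = \bigcup_{n\ge 1} F^n$ with every $F^n$ finite-dimensional (\S\ref{ss21}). Hence the image of $F^*$ under any $\sCDGA$ map lies in some $F^n$, and because $d(F^n) \subseteq \bigwedge^2 F^{n-1}$, such a map factors through the subalgebra $\CC^\bullet(E^n) = (\bigwedge^\bullet F^n, d)$. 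Using the identification of finite-dimensional Lie algebras with their Chevalley--Eilenberg algebras from \S\ref{ss11}, this gives
\[
\F(\cN \otimes F) \;=\; \lim_{\longrightarrow}\, \Hom_{\sCDGA}(\CC^\bullet(F), \CC^\bullet(E^n)) \;=\; \lim_{\longrightarrow}\, \Hom_{\sLie}(E^n \otimes \K, F).
\]

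Next I would appeal to Corollary \ref{cor:kunip} together with Remark \ref{rem:malunip}: the right-hand side of that corollary, taken with $U=\eexp(F)$, becomes $\Hom_{\gr}(G, \eexp(F))$, while the left-hand side is exactly $\lim_{\longrightarrow} \Hom_{\sLie}(E^n\otimes \K, F)$. So the two outer sets in the proposition are abstractly in bijection; what remains is to identify this bijection with the concrete map $\mon \circ f_1$.

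The verification step is where the main care is needed. Given $\psi \in \Hom_{\sLie}(E^n \otimes \K, F)$, unwinding Lemma \ref{lem:cflat} shows that the flat connection in $\F(\cN \otimes F)$ attached to $\psi$ is $(i_n \otimes \psi)(\omega_{E^n})$, where $i_n : \CC^\bullet(E^n) \hookrightarrow \cN$ is the canonical inclusion. Applying $f_1 \otimes F$ and using the factorization $f_1 \circ i_n = c^* \circ \pr_n^* \circ f^n$ from \S\ref{ss44} (after tensoring with $\K$, cf. Remark \ref{rem:adapt}), the resulting flat connection on $\Omega_\K(X)$ is the pullback of $(f^n \otimes \psi)(\omega_{E^n})$ along $\pr_n \circ c_\sharp$. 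Naturality of monodromy (\S\ref{ss41}, compatibility condition \eqref{eq=comp} with $\varphi = \eexp(\psi)$) then yields
\[
\mon\bigl( (f_1 \otimes F)(i_n \otimes \psi)(\omega_{E^n})\bigr) \;=\; \eexp(\psi) \circ \mon(\omega_n) \circ \pr_n \;=\; \eexp(\psi) \circ \kk_n \circ \pr_n,
\]
which is precisely the element attached to $\psi$ under Corollary \ref{cor:kunip}.

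The main obstacle is this last bookkeeping: one must keep straight the three distinct incarnations of the same data (a Lie map $\psi$, the associated flat $\sCDGA$ map $\psi^*$, and the group homomorphism $\eexp(\psi)$) and check that the composition $\mon \circ f_1$ matches the bijection produced by Corollary \ref{cor:kunip} stage by stage. Once the factorization $f_1 \circ i_n = c^* \circ \pr_n^* \circ f^n$ and the naturality of monodromy are invoked, bijectivity is immediate from Corollary \ref{cor:kunip}.
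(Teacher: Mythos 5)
Your proposal is correct and follows essentially the same route as the paper: identify $\F(\cN\otimes F)$ with $\lim_{\longrightarrow}\Hom_{\sLie}(E^n\otimes\K,F)$ via Lemma \ref{lem:cflat} and duality, then use the factorization $f_1\circ i_n=c^*\circ\pr_n^*\circ f^n$ together with the compatibility condition \eqref{eq=comp} and naturality of monodromy to match $\mon\circ f_1$ with the bijection of Corollary \ref{cor:kunip}. The only difference is that you spell out why a $\sCDGA$ map $\CC^{\bullet}(F)\to\cN$ factors through a finite stage $\CC^{\bullet}(E^n)$, which the paper treats as immediate.
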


\begin{proof}
Plainly, $\F (\cN \otimes F) =\lim \limits_{\rightarrow} \F (\CC^{\bullet}(E^n \otimes \K)\otimes F)$. By Lemma \ref{lem:cflat}
and duality, we may identify $ \F (\CC^{\bullet}(E^n \otimes \K)\otimes F)$ with 
\[
\Hom_{\sCDGA}(\CC^{\bullet}(F),  \CC^{\bullet}(E^n \otimes \K)) = \Hom_{\sLie} (E^n \otimes \K, F)=
\Hom_{\alggr}(\eexp (E^n \otimes \K), \eexp (F)) \, .
\]
Let $\omega \in  \F (\CC^{\bullet}(E^n \otimes \K)\otimes F)$ correspond in this way to
$\lambda \in \Hom_{\alggr}(\eexp (E^n \otimes \K), \eexp (F))$.

Consider the flat connections 
\[
(\pr_n \circ c)^* (\omega_n) \in   \F (\Omega_{\K} (X) \otimes E^n \otimes \K) \, , 
f^n (\omega) \in \F (\Omega_{\K} (G/G_n) \otimes F) \, . 
\]
It is easy to check that they satisfy the compatibility condition 
\eqref{eq=comp}, with respect to the continuous map $\pr_n \circ c: X \to K(G/G_n ,1)$ and the Lie group morphism $\lambda$.
By naturality, we infer like in \S\ref{ss43} that 
$\lambda \circ \mon (\omega_n) \circ \pr_n = \mon (f^n(\omega)) \circ \pr_n= \mon (f_1 (\omega))$.

We thus see that $ \mon \circ f_1 : \lim \limits_{\longrightarrow} \Hom_{\alggr} (\eexp (E^n\otimes \K), \eexp (F)) \rightarrow
\Hom_{\gr}(G, \eexp (F))$ becomes identified with the bijection from Corollary \ref{cor:kunip}.
\end{proof}

\section{Functors of Artin rings}
\label{sec:tdr}

The next step in understanding the relationship between analytic germs of characteristic varieties inside representation
varieties, on one hand, and resonance varieties inside varieties of flat connections, on the other hand, is at the level of
completions of the corresponding local analytic rings. We are going to formulate and prove the results on completion 
needed in the sequel, in the (equivalent) language of functors of Artin rings. 

\subsection{Coordinate rings of representation varieties}
\label{ss51}

Let $G$ be a finitely generated discrete group and $\BB$ a $\K$--linear algebraic group in characteristic $0$. 
Following \cite{LM}, we will denote by $\cH (G, \BB):= \Hom_{\gr} (G, \BB)$ the affine variety of $G$--representations into $\BB$,
with distinguished point $1$, the trivial representation. Let $P$ be the coordinate ring of $\cH (G, \BB)$, with corresponding
maximal ideal $\m$.

There is a {\em universal} group homomorphism, $u: G\to \BB (P)$, inducing a natural bijection, for $\Lambda \in \Ob (\sComm)$, 
\begin{equation}
\label{eq=ubij}
\Hom_{\alg} (P, \Lambda) \stackrel{\sim}{\longrightarrow} \Hom_{\gr} (G, \BB (\Lambda)) \, ,
\end{equation}
given by $\varphi \mapsto \BB (\varphi) \circ u$.

For a rational representation, $\iota: \BB \to \GL (V)$, denote by $\iota_{\Lambda}: \BB (\Lambda) \to \GL_{\Lambda} (V\otimes \Lambda)$
the associated group homomorphism, where $\GL_{\Lambda}$ stands for $\Lambda$--linear isomorphisms. Clearly,
$\GL (V)(\varphi) \circ \iota_{\Lambda} = \iota_{\Lambda'} \circ \BB (\varphi)$, for $\varphi \in \Hom_{\alg}( \Lambda, \Lambda')$. 

Denote by $\K G\in \Ob (\sAlg)$ the group ring of $G$, viewed as a unital, associative $\K$--algebra. For each 
$\varphi \in \Hom_{\alg}(P, \Lambda)$, note that $V\otimes \Lambda$ is a $\K G$--$\Lambda$--{\em bimodule}
(finitely generated free over $\Lambda$), with $\Lambda$ acting canonically on the right, and $\K G$ acting on the left via the
representation $\iota_{\Lambda} \circ \BB (\varphi) \circ u$.

\subsection{The setup}
\label{ss52}

In our main result on analytic germs of non-abelian jump loci, we will start from now on from the following data. 
We assume $\Omega^{\bullet} (X, \K) \simeq_q \A^{\bullet}$ ($1\le q\le \infty$), where both $X$ and $\A$ are $q$--finite objects,
in the sense of Definition \ref{def:fin}. In \S \ref{ss26}, we suppose that $f_1: \cN \to \Omega (X, \K)$ is the
$\pi_1$--adapted $1$--minimal model map constructed in Remark \ref{rem:adapt}, and we denote by $\overline{f_1}: \cN \to \A$
the resulting $1$--minimal model map.

In particular, let $G$ be a finitely generated group, with $1$--finite classifying space $X=K(G,1)$ and classifying map $c=\id$. Pick a
$\pi_1$--adapted $1$--minimal model map, $f_1: \cN \to \Omega (G, \K)$. Let $\A^{\bullet}$ be a $1$--finite $\sCDGA$. Assume
$\overline{f_1}: \cN \to \A$ is a $1$--minimal model map. 

\subsection{Artin approximation} 
\label{ss53}

(cf. \cite{T}, \cite{GM}, \cite{KM}) We work over $\K=\RR$ or $\C$. Let $G$ and $\A$ be as above. Denote by $\bb$ the Lie algebra
of the $\K$--linear group $\BB$. We want to relate the analytic germ at $1$ from \S\ref{ss51}, $\cH (G, \BB)_{(1)}$, with the
analytic germ at $0$ from \S\ref{ss12}, $\F (\A\otimes \bb)_{(0)}$. 

We denote by $R$ the local analytic ring of $\cH (G, \BB)_{(1)}$, and by $\wR$ its $\m$--adic completion. We have two canonical
maps in $\Hom_{\loc}$, $P\to R$ and $R\to \wR$, the first given by the passage from affine varieties to analytic germs and the
second coming from completion. Similar considerations apply to $\F (\A\otimes \bb)_{(0)}$, providing local $\K$--algebra maps 
$\oP \to \oR$ and $\oR \to \woR$. 

Artin's approximation theorem (see for instance \cite[III.4--5]{T}) implies that the analytic germ $\cH (G, \BB)_{(1)}$ is determined
by its {\em functor of Artin rings}, $A\in \Ob (\sArt) \mapsto \Hom_{\loc} (\wR, A) \equiv \Hom_{\loc}(P, A)$, and likewise for
$\F (\A\otimes \bb)_{(0)}$. 

\begin{remark}
\label{rem:schange}
Let $\psi \in \Hom_{\alg} (\wR, S)$ be onto. Then both $\wR$ and $S$ are formal local rings (in the terminology from \cite{T}),
$\psi$ is  local and induces a tower of surjective local homomorphisms, $\{ P\surj A_n \}$, compatible with the canonical local
epimorphisms $A_{n+1}\surj A_n$, where $A_n := S/\m_S^n \in \Ob (\sArt)$, for all $n$.
\end{remark}

\subsection{Analytic germs of representation varieties and Malcev completion}
\label{ss54}

We are going to prove our first main result on analytic germs coming from twisted homology, via functors of Artin rings.
In the setup from \S\ref{ss53}, we recall from \cite{GM} that the functor of Artin rings of $\cH (G, \BB)_{(1)}$
associates to $A$ the set $\Hom_{\loc}(P, A) \equiv \Hom_{\gr}(G, \eexp (\bb \otimes \m_A))$.
The natural identification map sends $\varphi \in \Hom_{\loc}(P, A)$ to $\BB (\varphi) \circ u$; see \eqref{eq=ubij} and \eqref{eq=bsemi}.

We will use the natural bijection from \S\ref{ss12}, 
$\Hom_{\loc}(\oP, A) \equiv \F_{\loc}(\A \otimes \bb \otimes A)$, that sends $\overline{\varphi} \in \Hom_{\loc}(\oP, A)$ to
$\overline{\varphi}(\overline{\omega})$ (where $\overline{\omega}$ is the universal flat connection \eqref{eq=funiv}), to describe 
the functor of Artin rings of $\F (\A\otimes \bb)_{(0)}$.

Denote by $\overline{f_1}: \F_{\loc}(\cN \otimes \bb \otimes A) \stackrel{\sim}{\longrightarrow} \F_{\loc}(\A \otimes \bb \otimes A)$
the natural bijection provided by Theorem \ref{thm:aflat}, where $\overline{f_1}: \cN \to \A$ is the $1$--minimal model map from 
\S\ref{ss52}.

\begin{prop}
\label{prop:fart}
Let $G$ be a finitely generated group, with $\pi_1$--adapted  $1$--minimal model map $f_1: \cN \to \Omega (G, \K)$.
Let $\overline{f_1} : \cN \to \A$ be another $1$--minimal model map, where $\A$ is $1$--finite. Let $\BB$ be a $\K$--linear
algebraic group, with Lie algebra $\bb$. Then the map
\[
\F_{\loc} (\A \otimes \bb \otimes A) \stackrel{\overline{f}_1^{-1}}{\longrightarrow} \F_{\loc} (\cN \otimes \bb \otimes A)
\stackrel{\mon \circ f_1}{\longrightarrow} \Hom_{\gr} (G, \eexp (\bb \otimes \m_A))
\]
is a natural bijection, for  $A \in \Ob (\sArt)$. In particular, the analytic germs $\cH (G, \BB)_{(1)}$ and $\F (\A, \bb)_{(0)}$
are isomorphic.
\end{prop}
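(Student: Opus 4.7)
The plan is to assemble the asserted bijection from two results already established in the excerpt: Theorem \ref{thm:aflat} for the left arrow $\overline{f_1}^{-1}$, and Proposition \ref{prop:fbij} for the right arrow $\mon\circ f_1$, and then invoke Artin approximation to pass from functors of Artin rings to analytic germs.

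First, since $\overline{f_1}:\cN\to\A$ is a $1$-equivalence between connected $\sCDGA$'s, Theorem \ref{thm:aflat} (applied with $E=\bb$ and the Artin ring $A$) immediately gives a natural bijection $\overline{f_1}\otimes\bb\otimes\m_A:\F_{\loc}(\cN\otimes\bb\otimes A)\isom\F_{\loc}(\A\otimes\bb\otimes A)$, hence its inverse is a natural bijection in $A$. So it suffices to show that $\mon\circ f_1:\F_{\loc}(\cN\otimes\bb\otimes A)\to\Hom_{\gr}(G,\eexp(\bb\otimes\m_A))$ is a natural bijection.

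Next, set $F:=\bb\otimes\m_A$. Because $\BB$ is a linear algebraic group, $\bb$ is finite-dimensional over $\K$; because $A$ is Artinian local, $\m_A$ is a nilpotent ideal, so $F$ is a finite-dimensional nilpotent $\K$-Lie algebra. By the identification \eqref{eq=locmc} we have $\F_{\loc}(\cN\otimes\bb\otimes A)=\F(\cN\otimes F)$, and the target group $\eexp(\bb\otimes\m_A)$ is literally the exponential group $\eexp(F)$ of \S\ref{ss22}. Proposition \ref{prop:fbij}, applied to this $F$ and to $X=K(G,1)$ with the given $\pi_1$-adapted $1$-minimal model map $f_1:\cN\to\Omega(G,\K)$ (one needs here only to note that the quasi-isomorphism $\Omega_\K(G)\hookrightarrow\Omega(G,\K)$ is compatible with monodromy, so that $f_1$ may equally be regarded as landing in $\Omega_\K(G)$ up to the same zigzag), therefore gives exactly the desired bijection $\mon\circ f_1:\F(\cN\otimes F)\isom\Hom_{\gr}(G,\eexp(F))$. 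Naturality in $A$ is inherited from naturality of $\mon$ under Lie-homomorphisms (\S\ref{ss43}), applied to the morphisms $\bb\otimes\m_A\to\bb\otimes\m_{A'}$ induced by $\psi\in\Hom_{\loc}(A,A')$, and from $\K$-linearity of $f_1$ and $\overline{f_1}$.

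Finally, combining the two steps yields a natural bijection of functors of Artin rings, $\Hom_{\loc}(\woR,A)\cong\Hom_{\loc}(\wR,A)$, where $\wR$ and $\woR$ are the completions of the local analytic rings $R$ of $\cH(G,\BB)_{(1)}$ and $\oR$ of $\F(\A,\bb)_{(0)}$, as described in \S\ref{ss53}. By Yoneda, this upgrades to an isomorphism of complete local $\K$-algebras $\wR\cong\woR$. Artin's approximation theorem (see \cite{T}) then lifts this completion isomorphism to an isomorphism of reduced analytic germs, $\F(\A,\bb)_{(0)}\isom\cH(G,\BB)_{(1)}$. The main obstacle I anticipate is purely bookkeeping: verifying that the passage between $\Omega_\K(G)$ and $\Omega(G,\K)$ does not disturb either the monodromy construction of \S\ref{ss45} or the compatibility of $f_1$ used in Proposition \ref{prop:fbij}; once this is in place, the two ``equivalence theorems'' do all the real work.
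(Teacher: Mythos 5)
Your proposal is correct and follows exactly the paper's route: the paper's proof is the one line ``Apply Proposition \ref{prop:fbij} to $X=K(G,1)$ and $F=\bb\otimes\m_A$,'' with the left arrow already supplied by Theorem \ref{thm:aflat} in the discussion preceding the proposition and the germ statement supplied by the Artin--approximation principle of \S\ref{ss53}. You have merely spelled out the same steps (including the harmless $\Omega_{\K}(G)$ versus $\Omega(G,\K)$ bookkeeping from Remark \ref{rem:adapt}) in more detail.
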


\begin{proof}
Apply Proposition \ref{prop:fbij} to $X=K(G,1)$ and $F=\bb \otimes \m_A$.
\end{proof}

\begin{remark}
\label{rem:compliso}
In the topological setup from the beginning of \S \ref{ss52}, we will use the above natural equivalence between functors 
of Artin rings to identify the completions:  $\wR \stackrel{\sim}{\leftrightarrow} \woR$. (Note that the $1$--minimal model maps for $X$ and 
$G$ differ by $c^*: \Omega(G, \K) \to  \Omega(X, \K)$, but this does not change the map $\mon \circ f_1$, by naturality of monodromy,
since $c_{\sharp}=\id_G$.)
\end{remark}

\begin{example}
\label{ex:abe}
Suppose in Proposition \ref{prop:fart} that $\BB$ is abelian. This implies that the map $\overline{f_1}: \F (\cN , \bb) \to \F (\A , \bb)$ 
equals the linear isomorphism $H^1\overline{f_1} \otimes \bb: H^1\cN \otimes \bb \stackrel{\simeq}{\longrightarrow} H^1\A \otimes \bb$, 
since both $\sCDGA$'s are connected.
Set $e:= \mon \circ f_1 \circ \overline{f}_1^{-1}: \F (\A , \bb) \to \cH (G, \BB)$.

By \eqref{eq=abtr}, the (global) analytic map $e$ induces a local analytic map $e:  \F (\A, \bb)_{(0)} \to \cH (G, \BB)_{(1)}$, i.e., 
a local ring morphism $e: R \to \oR$. It is straightforward to check that the induced map on completions,
$\widehat{e}: \wR \stackrel{\simeq}{\longrightarrow} \woR$, is given in terms of functors of Artin rings by the natural equivalence 
constructed in Proposition \ref{prop:fart}. In particular, $e$ is a local analytic isomorphism. 
\end{example}

\begin{theorem}
\label{thm:gralkm}
Let $G$ be a finitely generated group, with $1$--minimal model $\cN$ over  $\K =\RR$ or $\C$. Let $\BB$ be a
$\K$--linear algebraic group, with Lie algebra $\bb$. Then the analytic germ $\cH (G, \BB)_{(1)}$ depends only on
$\cN$ and $\bb$.
\end{theorem}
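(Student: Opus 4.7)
The plan is to deduce Theorem \ref{thm:gralkm} as an essentially formal consequence of Proposition \ref{prop:fart}, using Artin's approximation theorem to pass from functors of Artin rings back to analytic germs. More precisely, by the discussion in \S \ref{ss53}, the analytic germ $\cH(G,\BB)_{(1)}$ is determined by its functor of Artin rings, $A \mapsto \Hom_{\loc}(P,A) \equiv \Hom_{\gr}(G, \eexp(\bb \otimes \m_A))$. So it suffices to identify this functor in terms that depend only on $\cN$ and $\bb$.

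To do this, I would apply Proposition \ref{prop:fart} with the choice $\A = \cN$. Since $\cN$ is its own $1$-minimal model (with $\overline{f_1} = \id_{\cN}$), and the tower \eqref{eq=ltower} exhibits $\cN$ as a direct limit of $1$-finite $\sCDGA$'s $\CC^{\bullet}(E^n)$, we obtain a natural bijection
\[
\F_{\loc}(\cN \otimes \bb \otimes A) \;\stackrel{\mon \circ f_1}{\longrightarrow}\; \Hom_{\gr}(G, \eexp(\bb \otimes \m_A))
\]
for every $A \in \Ob(\sArt)$. (Strictly speaking, Proposition \ref{prop:fart} is stated for $1$-finite $\A$, but since $\F_{\loc}$ commutes with the direct limit $\cN = \lim_{\rightarrow} \CC^{\bullet}(E^n)$, and every map into a nilpotent target Lie algebra $\bb \otimes \m_A$ factors through some $E^n \otimes \K$, we can either invoke Proposition \ref{prop:fart} levelwise and take the limit, or invoke Proposition \ref{prop:fbij} directly with $F = \bb \otimes \m_A$.) The left-hand side depends tautologically only on $\cN$ and $\bb$, which proves that the functor of Artin rings of $\cH(G,\BB)_{(1)}$ is determined by $\cN$ and $\bb$.

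To conclude, suppose $G$ and $G'$ are two finitely generated groups whose $1$-minimal models over $\K$ are isomorphic, say $\cN \cong \cN'$. Then for any $\K$-linear algebraic group $\BB$ with Lie algebra $\bb$, the two functors of Artin rings associated to $\cH(G,\BB)_{(1)}$ and $\cH(G',\BB)_{(1)}$ are naturally isomorphic by the bijection above. By Artin's approximation theorem (see \cite{T}), this yields an isomorphism of reduced analytic germs $\cH(G,\BB)_{(1)} \cong \cH(G',\BB)_{(1)}$, depending only on the isomorphism $\cN \cong \cN'$ and the Lie algebra $\bb$.

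The only subtle point, and what I expect to be the main technical obstacle, is the passage through the direct limit $\cN = \lim_{\rightarrow} \CC^{\bullet}(E^n)$: one must check that flat connections on $\cN$ with values in the nilpotent Artinian Lie algebra $\bb \otimes \m_A$ factor through a finite stage of the tower, so that Proposition \ref{prop:fart} applies. This is immediate from the nilpotence of $\bb \otimes \m_A$ and the construction of the canonical filtration in \S \ref{ss21}, together with Lemma \ref{lem:cflat}, which translates flat connections into $\sCDGA$ maps out of the dual Chevalley--Eilenberg complex.
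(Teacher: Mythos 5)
Your proposal is correct and follows essentially the same route as the paper: the paper's own proof also reduces to the functor of Artin rings via Artin approximation and then invokes Proposition \ref{prop:fbij} with $F=\bb\otimes\m_A$ to identify $\Hom_{\gr}(G,\eexp(\bb\otimes\m_A))$ with $\F_{\loc}(\cN\otimes\bb\otimes A)$, which manifestly depends only on $\cN$ and $\bb$. The direct-limit point you flag is indeed the only subtlety, and it is already absorbed into the proof of Proposition \ref{prop:fbij} via $\F(\cN\otimes F)=\lim_{\rightarrow}\F(\CC^{\bullet}(E^n\otimes\K)\otimes F)$.
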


\begin{proof}
This is another corollary of  Proposition \ref{prop:fbij}.
We may replace  $\cH (G, \BB)_{(1)}$ by its functor of Artin rings. By uniqueness of $1$--minimal models, there is a
$\pi_1$--adapted $1$--minimal model map $f_1: \cN \to \Omega (G, \K)$. (An arbitrary $1$--minimal model map is
$\pi_1$--adapted only up to algebraic homotopy.)
For $A\in \Ob (\sArt)$, we identify 
$\Hom_{\gr}(G, \eexp (\bb \otimes \m_A))$ with $\F_{\loc} (\cN \otimes \bb \otimes A)$
in a natural way, like in Proposition \ref{prop:fart}. 
\end{proof}

We recall that the $1$--minimal model $\cN$ is the dual form of Quillen's \cite{Q} Malcev Lie algebra of $G$, $\Mal (G)$. 
More precisely, $\Mal (G) =\lim \limits_{\leftarrow} E^n \otimes \K$, as complete Lie algebras. We say that the group $G$ is $q$--formal
(over $\K$) if $K(G,1)$ is a $q$--formal space. It is known that a finitely generated group $G$ is $1$--formal if and only if $\Mal (G)$
has a quadratic presentation, as a complete Lie algebra (see e.g. \cite{ABC}). 

Given these remarks, Theorem \ref{thm:gralkm} may be viewed as a substantial extension of 
\cite[Theorem 17.1]{KM},
 where the
conclusion is obtained only for $1$--formal groups. The proof of the general case from Theorem \ref{thm:gralkm} is simple
(and possibly known to experts): the  $1$--minimal model determines the Malcev Lie algebra, which determines the functor of Artin rings,
hence the analytic germ. The reason why our proof is longer is that we need to relate the $1$--minimal model map
and the Malcev completion of $G$ in terms of monodromy representations, with an eye for our subsequent analysis of germs of
characteristic varieties; see Theorem \ref{thm:twdr} below.

\subsection{Functorial twisted De Rham theorem}
\label{ss55}

(cf. \cite{GT}, \cite{H}, \cite{S}, \cite{W}) Let $X$ be a path--connected pointed space, with fundamental group $G$.
Fix a morphism of $\K$--linear groups (over $\K=\RR$ or $\C$), $\iota: \BB \to \GL (V)$, and denote by $\theta: \bb \to \gl (V)$
the tangent map.

For $A\in \Ob (\sArt)$ and $\omega \in \F_{\loc}(\Omega^{\bullet} (X, \K)\otimes \bb \otimes A)$, consider the Aomoto complex
$(\Omega^{\bullet} (X, \K)\otimes V \otimes A, d_{\omega})$ from \eqref{eq=defaom}. Its homology is endowed with the
canonical graded $A$--module structure. 

As explained in \S\ref{ss43}, we have an associated monodromy representation, 
$\mon (\omega): G \to \eexp (\bb \otimes \m_A) \subseteq \BB (A)$.
Set $L(\omega):= \iota_A \circ \mon (\omega): G \to \GL_A (V\otimes A)$. This defines a {\em local system} of $A$--modules on $X$.
Plainly, $L(\omega)$ is the $\K G$--$A$--bimodule $V\otimes A$ constructed in \S\ref{ss51}, where $\varphi \in \Hom_{\loc} (P, A)$
corresponds to $\mon (\omega)$ under the bijection \eqref{eq=ubij}. We denote by $H^{\bullet}(X, L(\omega))$ the graded
{\em twisted cohomology} $A$--module of $X$, with coefficients $L(\omega)$. 

We will need Sullivan's functorial twisted De Rham theorem for topological spaces.

\begin{theorem}[\cite{GT, S}]
\label{thm:twdr}
For $A \in \Ob (\sArt)$ and $\omega \in  \F_{\loc} (\Omega (X, \K) \otimes \bb \otimes A)$, there is a natural isomorphism of 
graded $A$--modules, $H^{\bullet}(\Omega (X, \K) \otimes V \otimes A, d_{\omega}) \cong H^{\bullet}(X, L(\omega)) $.
\end{theorem}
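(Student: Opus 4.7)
The plan is to construct a cochain-level comparison map and prove it is a quasi-isomorphism by filtering both sides by powers of the nilpotent maximal ideal $\m_A$. The crucial feature of the hypothesis $\omega\in \F_{\loc}$ is that $\omega$ takes values in $\bb\otimes\m_A$, so the monodromy $\mon(\omega)$ lands in the unipotent group $\eexp(\bb\otimes\m_A)$. This makes both sides unipotent deformations of the untwisted situation.

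First, I would use parallel transport to build a natural $A$-linear cochain map
\[
\Phi\colon (\Omega^{\bullet}(X,\K)\otimes V\otimes A,\, d_{\omega}) \longrightarrow C^{\bullet}(X,\, L(\omega)),
\]
where the right-hand side is the singular cochain complex computing $H^{\bullet}(X,L(\omega))$. Concretely, for a singular simplex $\sigma\colon \Delta_n\to X$ and a form $\eta\in \Omega^n(X,\K)\otimes V\otimes A$, one integrates $\sigma^{*}\eta$ over $\Delta_n$ after trivializing $L(\omega)$ along $\sigma$ via parallel transport from the basepoint using the connection $\omega$. This is precisely the integration recipe of Sullivan and Gomez Tato \cite{GT, S}; the fact that $\Phi$ intertwines $d_{\omega}$ with the singular coboundary twisted by $L(\omega)$ is a Stokes-type identity, using that $\omega$ is flat.

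Second, I would filter both complexes by $F^s=(\cdot)\otimes\m_A^s$. Since $\omega\in \Omega^1(X,\K)\otimes\bb\otimes\m_A$, the covariant derivative $d_\omega=d+\ad_\omega$ preserves $F^{\bullet}$ on the Aomoto side, and $\gr^s$ reduces to $(d\otimes V\otimes \gr_s(A))$, i.e.\ the untwisted De Rham complex with coefficients in $V\otimes\gr_s(A)$. On the twisted cohomology side, because $\iota_A\circ\mon(\omega)$ acts on $V\otimes A$ through $\eexp(\bb\otimes\m_A)$, the sub-local-system $V\otimes\m_A^s\subseteq L(\omega)$ is $G$-stable with trivial action on the quotient; thus $\gr^s C^{\bullet}(X,L(\omega))=C^{\bullet}(X,\K)\otimes V\otimes \gr_s(A)$. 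The map $\Phi$ is compatible with both filtrations by construction, and on $\gr^s$ it specializes to the classical integration map tensored with $\id_{V\otimes\gr_s(A)}$.

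Third, by Theorem \ref{thm:topdr} the induced map on $\gr^s$ is a quasi-isomorphism for every $s$. The filtrations are finite since $\m_A$ is nilpotent ($A$ being Artinian), so the associated spectral sequences converge strongly and a standard comparison argument shows $\Phi$ is a quasi-isomorphism, giving the claimed natural $A$-linear isomorphism $H^{\bullet}(\Omega(X,\K)\otimes V\otimes A, d_{\omega})\cong H^{\bullet}(X,L(\omega))$. The main obstacle is the first step: verifying that the parallel-transport integration actually produces a chain map with respect to $d_{\omega}$ on all of $\Omega^{\bullet}\otimes V\otimes A$, not just in degree zero. This requires a careful Stokes-type computation using the Maurer--Cartan equation, and is the technical heart of \cite{GT, S}; once $\Phi$ is in hand, the filtration and spectral-sequence argument is purely formal, parallel to the proof of Theorem \ref{thm:adr}.
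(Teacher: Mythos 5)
Your proposal is correct, and it is in fact more explicit than what the paper does: the paper's entire proof is a citation of Theorem 4.7 and Remark 4.8 of Gomez Tato's paper \cite{GT} (which treats $\iota=\id_{\GL(V)}$, $A=\RR$), followed by the assertion that "his arguments may be easily adapted to our more general case." Your filtration argument is a concrete way to carry out that adaptation, and it works: since $\omega\in\F_{\loc}$, the covariant derivative $d_{\omega}=d+\ad_{\omega}$ raises the $\m_A$--adic filtration degree on the Aomoto side, so $\gr_s$ is the untwisted De Rham complex with coefficients $V\otimes\gr_s(A)$ (exactly as in the proof of Theorem \ref{thm:adr}); and on the topological side the monodromy lands in $\eexp(\bb\otimes\m_A)$, so $V\otimes\m_A^s$ is a $G$--stable sub-bimodule with trivial action on $\gr_s$, and exactness of $\Hom_{\K G}(\widetilde{C_{\bullet}},-)$ in the coefficient bimodule (freeness of $\widetilde{C_{\bullet}}$) identifies the associated graded of the twisted cochain complex with untwisted cochains tensored with $V\otimes\gr_s(A)$. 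Both filtrations are finite because $\m_A$ is nilpotent, so the spectral sequence comparison is legitimate. The one point you correctly flag as the technical heart --- that parallel-transport integration defines a filtered cochain map intertwining $d_{\omega}$ with the twisted coboundary --- is precisely the content of \cite{GT}, and neither you nor the paper reproves it; your reduction, however, only needs this map together with the classical untwisted De Rham theorem (Theorem \ref{thm:topdr}) on $E_1$, rather than the full strength of Gomez Tato's twisted De Rham theorem, which is a mild but genuine economy.
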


\begin{proof}
The claims follow from Theorem 4.7 and Remark 4.8 of \cite{GT}. Note that Gomez Tato treats in \cite{GT} the case $\iota =\id_{\GL (V)}$,
$A=\RR$ and $\omega \in  \F (\Omega (X, \RR) \otimes \gl (V))$. Nevertheless, his arguments may be easily adapted to our more general case.
\end{proof}

\section{Universal cochains}
\label{sec:tech}

Our approach to germs of jump loci is based on the construction of two universal cochain complexes for  computing twisted cohomology:
the topological cochains come from the universal representation $u$ from \S\ref{ss51}, and the algebraic counterpart is given by 
the universal flat connection $\overline{\omega}$ from \S\ref{ss12}. 

\subsection{Universal topological cochains}
\label{ss61}

We are going to define this object in the following context. Let $X$ be a connected CW--complex with finite $1$--skeleton, with basepoint
$\pt =X^{(0)}$ and (finitely generated) fundamental group $G$. Let $\iota : \BB \to \GL (V)$ be a rational representation of $\K$--linear
algebraic groups in characteristic $0$.

Denote by $C_{\bullet} =C_{\bullet}(X)$ the cellular $\K$--chain complex of $X$. Lift the cell structure of $X$ to the universal cover $\widetilde{X}$,
and let $\widetilde{C_{\bullet}} =C_{\bullet}(\widetilde{X})$ be the corresponding cellular {\em equivariant} $\K$--chains. Note that
$\widetilde{C_{\bullet}} \cong \Gamma \otimes C_{\bullet}$ is free as a graded left $\Gamma$--module (where $\Gamma =\K G$ acts via 
deck transformations on $\widetilde{X}$), and the differential $\partial_{\bullet}$ is $\Gamma$--linear. 

For $\Lambda \in \Ob (\sComm)$ and a right $\Lambda$--module $M\in \Ob (\sMod-\Lambda)$, we recall that a local system $L$ in $\sMod-\Lambda$
on $X$ with fiber $M$ is identified with a $\Gamma$--$\Lambda$ bimodule structure on $M$. Then
\begin{equation}
\label{eq=celltw}
H^{\bullet}(X, L) = H^{\bullet}(\Hom_{\Gamma}(\widetilde{C_{\bullet}}, M)) \, ,
\end{equation}
as graded $\Lambda$--modules, where $\Hom_{\Gamma}$ stands for $\Gamma$--linear maps; see for instance \cite{W}.

This leads to a first (homotopy--invariant) definition.

\begin{definition}
\label{def:charv}
Let $X$ be a connected CW--complex with finite $1$--skeleton (up to homotopy). For $i,r \ge 0$, the {\em characteristic variety} 
(in degree $i$ and depth $r$, relative to the rational representation $\iota : \BB \to \GL (V)$) is
\[
\V^i_r (X, \iota):= \big \{ \rho \in \cH (G, \BB) \, \mid \, \dim_{\K} H^i (X, {}_{\iota \rho} V) \ge r \big \} \, ,
\]
where $G=\pi_1 (X)$ and  the $\K G$--$\K$--bimodule structure of ${}_{\iota \rho} V$ is given by the representation $\iota \circ \rho$.
For $i\ge 0$ and $\rho \in \cH (G, \BB)$, we define the $i$--th {\em twisted Betti number} by 
$b_i (X, \rho):= \dim_{\K} H^i (X, {}_{\iota \rho} V)$.
\end{definition}

For our second definition, we recall from \S\ref{ss51} the affine $\K$--algebra $P$ (the coordinate ring of $\cH (G, \BB)$) and the
$\K G$--$P$ bimodule $V\otimes P$ corresponding to $\id \in \Hom_{\alg}(P, P)$, on which $G$ acts by the representation
$\iota_P \circ u$. 

\begin{definition} 
\label{def:univtop}
The {\em universal} $P$--cochain complex of the connected CW--complex $X$ with finite $1$--skeleton (with respect to the rational
representation $\iota : \BB \to \GL (V)$) is
\[
C^{\bullet}(X, \iota):= \Hom_{\K G} (\widetilde{C_{\bullet}}(X), V\otimes P) \, ,
\]
where $G=\pi_1(X)$ and $V\otimes P$ is  the $\K G$--$P$ bimodule described above.
\end{definition}

For $\Lambda \in \Ob (\sComm)$ and $\varphi \in \Hom_{\alg}(P, \Lambda)$, denote by $L(\varphi)=V \otimes \Lambda$ 
the $\K G$--$\Lambda$ bimodule (i.e., the local system on $X$ in $\sMod-\Lambda$ with fiber $V\otimes \Lambda$)
constructed in \S\ref{ss51}.

\begin{lemma}
\label{cor:phispec}
Let $\varphi \in \Hom_{\alg}(P, \Lambda)$ be surjective. Then the following hold.
\begin{enumerate}

\item \label{tf2}
There is a natural isomorphism in $\Lambda-\sMod$,
\[
H^{*}(X, L(\varphi))\cong H^{*}(C^{\bullet} (X, \iota) \otimes_P \Lambda) \, .
\]

\item \label{tf3}
When $\varphi \in \Hom_{\alg}(P, \K)$, corresponding to the $\K$--point $\rho \in \cH (G, \BB)$,
$\rho \in \V^i_r (X, \iota)$ if and only if $\dim_{\K}H^i(C^{\bullet} (X, \iota) \otimes_P \K) \ge r$, for all
$i$ and $r$. 
\end{enumerate}
\end{lemma}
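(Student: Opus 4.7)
The plan is to prove (1) by identifying $H^{\bullet}(X, L(\varphi))$ with the cohomology of an equivariant cellular cochain complex and then recognizing this as a specialization of the universal complex $C^{\bullet}(X, \iota)$. The cellular formula \eqref{eq=celltw}, applied to the $\K G$--$\Lambda$ bimodule $L(\varphi) = V \otimes \Lambda$ (where the $\K G$-action is $\iota_{\Lambda} \circ \BB(\varphi) \circ u$), gives
$$H^{\bullet}(X, L(\varphi)) = H^{\bullet}\bigl(\Hom_{\K G}(\widetilde{C_{\bullet}}(X), V \otimes \Lambda)\bigr).$$
Thus it suffices to produce a natural $\Lambda$-linear isomorphism of cochain complexes,
$$\Theta: C^{\bullet}(X, \iota) \otimes_P \Lambda \isom \Hom_{\K G}(\widetilde{C_{\bullet}}(X), V \otimes \Lambda),$$
given by the canonical Hom-tensor comparison $f \otimes \lambda \mapsto \bigl[c \mapsto ((V \otimes \varphi) \circ f)(c) \cdot \lambda\bigr]$. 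That this map commutes with differentials is immediate, since the $\K G$-linear cellular boundary of $\widetilde{C_{\bullet}}(X)$ is independent of $\varphi$ and $\Lambda$.

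The verification that $\Theta$ is a degreewise isomorphism rests on the fact that $\widetilde{C_{\bullet}}(X)$ is free as a left $\K G$-module, with one generator per cell of $X$. Under the running finiteness hypothesis of \S\ref{sec:tech} (inherited from the applications, where $X^{(q)}$ is finite and one works in degrees $\le q$), $\widetilde{C_i}(X) \cong (\K G)^{n_i}$ is finitely generated free in each relevant degree, so
$$\Hom_{\K G}(\widetilde{C_i}, V \otimes P) \otimes_P \Lambda \cong (V \otimes P)^{n_i} \otimes_P \Lambda \cong (V \otimes \Lambda)^{n_i} \cong \Hom_{\K G}(\widetilde{C_i}, V \otimes \Lambda),$$
since tensor product commutes with finite direct sums. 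This establishes (1). The main (and essentially only) obstacle in the proof is precisely this Hom--tensor swap: without a finiteness assumption on the cells of $X$ in the degree under consideration one would be dealing with $\prod_{\alpha} V\otimes P$, which does not commute with $\otimes_P \Lambda$ in general. Note that surjectivity of $\varphi$ is not needed for $\Theta$ to be an isomorphism; it is included because the lemma will be applied in Proposition \ref{prop:comp} to surjections $\wR \surj S$ arising from completed local analytic rings.

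Part (2) is then a direct specialization of (1) to $\Lambda = \K$. The universal property \eqref{eq=ubij} identifies a $\K$-algebra homomorphism $\varphi: P \to \K$ with the representation $\rho = \BB(\varphi) \circ u \in \cH(G, \BB)$, and unwinding the construction of $L(\varphi)$ shows it is nothing but $V$ endowed with the left $\K G$-module structure $\iota \circ \rho$, i.e., $L(\varphi) = {}_{\iota \rho} V$. Hence (1) yields $H^i(C^{\bullet}(X, \iota) \otimes_P \K) \cong H^i(X, {}_{\iota \rho} V)$ as $\K$-vector spaces, and the condition $\dim_{\K} H^i \ge r$ translates via Definition \ref{def:charv} into the condition $\rho \in \V^i_r(X, \iota)$.
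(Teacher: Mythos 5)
Your overall route is the same as the paper's: realize $H^{\bullet}(X,L(\varphi))$ via the equivariant cellular cochain complex \eqref{eq=celltw}, and compare $C^{\bullet}(X,\iota)\otimes_P\Lambda$ with $\Hom_{\K G}(\widetilde{C_{\bullet}},V\otimes\Lambda)$ by the canonical Hom--tensor map. Part \eqref{tf3} is handled identically. However, there is a genuine gap in your treatment of part \eqref{tf2}, and it sits exactly at the point you yourself flag as the only obstacle.

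The lemma lives in the setting of \S\ref{ss61}, where $X$ is only assumed to have \emph{finite $1$--skeleton}; the conclusion is asserted in all degrees (and \eqref{tf3} explicitly says ``for all $i$ and $r$''). For $i\ge 2$ the set of $i$--cells may be infinite, so $\Hom_{\K G}(\widetilde{C_i},V\otimes P)$ is a direct \emph{product} $(V\otimes P)^{c_i}$ with $c_i$ possibly infinite, and your identification via ``tensor product commutes with finite direct sums'' does not apply. You resolve this by importing a finiteness hypothesis on the higher skeleta ``inherited from the applications,'' but that hypothesis is not part of the lemma, and the paper does not need it here: it observes that the canonical map $M^{c_i}\otimes_P\Lambda\to (M\otimes_P\Lambda)^{c_i}$ is an isomorphism because $P$ is Noetherian and $\Lambda$ is a \emph{finitely generated $P$--module via the surjection $\varphi$} (Cartan--Eilenberg: tensoring with a finitely presented module commutes with arbitrary direct products). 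This also shows that your aside --- that surjectivity of $\varphi$ is not needed for $\Theta$ to be an isomorphism --- is wrong in the lemma's actual generality: surjectivity (or at least finite generation of $\Lambda$ over $P$) is precisely what rescues the Hom--tensor swap in the degrees with infinitely many cells. To repair your proof, replace the finite direct sum argument by the direct product comparison $\Hom_{\K G}(\widetilde{C_i},V\otimes P)\otimes_P\Lambda\cong (V\otimes P)^{c_i}\otimes_P\Lambda\to\bigl((V\otimes P)\otimes_P\Lambda\bigr)^{c_i}$ and invoke Noetherianity of $P$ together with the finite generation of $\Lambda$ supplied by the surjectivity hypothesis.
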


\begin{proof}
Part \eqref{tf2}. Set $\Gamma =\K G$, $C_{\bullet}= C_{\bullet}(X)$, $\widetilde{C_{\bullet}}= C_{\bullet}(\widetilde{X})$,
$M=V\otimes P$, $C^{\bullet} =C^{\bullet} (X, \iota)$. By definition, $C^{\bullet} =\Hom_{\Gamma} (\widetilde{C_{\bullet}}, M)$.
Denote by $j:M \to M\otimes_P \Lambda$ the canonical $\Gamma-P$--linear map which sends $m$ to $m\otimes 1$.
Clearly, the correspondence $f\mapsto j\circ f$, where $f\in \Hom_{\Gamma} (\widetilde{C_{\bullet}}, M)$, 
gives rise to a morphism of $\Lambda$--cochain complexes,
\begin{equation}
\label{eq:l86}
\Phi \colon \Hom_{\Gamma} (\widetilde{C_{\bullet}}, M)\otimes_P \Lambda  \longrightarrow
\Hom_{\Gamma} (\widetilde{C_{\bullet}}, M \otimes_P \Lambda) \, .
\end{equation}

For a fixed degree $\bullet=i$, denote by $c_i$ the cardinality of a $\K$--basis of $C_i$. Plainly, $\Phi$ in degree $i$ may be identified 
with the canonical map, $M^{c_i}\otimes_P \Lambda \rightarrow (M\otimes_P \Lambda)^{c_i}$, 
associated to the direct product. This map in turn is an isomorphism, since $P$ is Noetherian and $\Lambda$ is finitely generated 
over $P$, via $\varphi$; see \cite[pp.~31--32]{CE}. A routine check shows that the $\K G-\Lambda$--bimodules
$L(\id_P)\otimes_P \Lambda$ and $L(\varphi)$ are naturally isomorphic, 
since $\GL (V)(\varphi) \circ \iota_P= \iota_{\Lambda}\circ \BB (\varphi)$.
Part \eqref{tf2} follows.

Part \eqref{tf3}. By Definition \ref{def:charv} and \eqref{eq=ubij}, $\rho \in \V^i_r (X, \iota)$ if and only if 
$\dim_{\K}H^i(X, L(\varphi)) \ge r$. Due to Part \eqref{tf2}, this is equivalent to 
$\dim_{\K}H^i(C^{\bullet} (X, \iota) \otimes_P \K) \ge r$.
\end{proof}

\subsection{Universal algebraic cochains}
\label{ss64}

Let $\A^{\bullet} \in \Ob (\sCDGA)$ be connected, with $\A^1$ finite--dimensional. 
Given a finite--dimensional Lie module over a finite--dimensional $\K$--Lie algebra, with
structure map $\theta: \bb \to \gl (V)$, let $\oP$ be the coordinate ring of $\F (\A, \bb)$.
We recall from \eqref{eq=funiv} the universal flat connection, $\overline{\omega} \in \F_{\loc} (\A \otimes \bb \otimes \oP)$.

\begin{definition}
\label{def:univalg}
The {\em universal} $\oP$--cochain complex of $\A$ (with respect to the $\bb$--module $V$) is
the universal Aomoto complex \eqref{eq=univaom},
$C^{\bullet}(\A, \theta):= (\A^{\bullet}\otimes V \otimes \oP, d_{\overline{\omega}})$. 
\end{definition}

\begin{definition}
\label{def:resv}
Let $\A^{\bullet}$ be a $1$--finite $\sCDGA$. For $i,r\ge 0$, the {\em resonance variety} (in degree $i$ and depth $r$, with respect to
the finite--dimensional Lie representation $\theta: \bb \to \gl (V)$) is 
\[
\R^i_r (\A, \theta):= \big \{ \omega \in \F (\A, \bb) \, \mid \, \dim_{\K} H^i (\A \otimes V, d_{\omega}) \ge r \big \} \, ,
\]
where $(\A^{\bullet} \otimes V, d_{\omega})$ is the Aomoto complex \eqref{eq=defaom}. For $i\ge 0$ and $\omega \in \F (\A, \bb)$,
denote by $\beta_i (\A, \omega):=\dim_{\K} H^i (\A \otimes V, d_{\omega})$ the $i$--th {\em Aomoto--Betti number}.
\end{definition}

We have the following analog of Lemma \ref{cor:phispec}. 

\begin{lemma}
\label{cor:respec}
The universal $\oP$--cochain complex has the following properties.
\begin{enumerate}

\item \label{rf2}
For $A\in \Ob (\sArt)$ and $\overline{\varphi} \in \Hom_{\loc}( \oP, A)$, the graded $A$--modules 
$H^{*} (\A^{\bullet} \otimes V \otimes A, d_{\overline{\varphi} (\overline{\omega})})$ and
$H^{*} (C^{\bullet} (\A, \theta) \otimes_{\oP} A)$ are naturally isomorphic, 
where $\overline{\omega}$ is the universal flat
connection \eqref{eq=funiv}.

\item \label{rf3}
For $\overline{\varphi} \in \Hom_{\alg}( \oP, \K)$, corresponding to the $\K$--point $\overline{\rho} \in \F (\A, \bb)$, 
$\overline{\rho} \in \R^i_r (\A, \theta)$ if and only if $\dim_{\K} H^i (C^{\bullet} (\A, \theta) \otimes_{\oP} \K) \ge r$, for all
$i$ and $r$.
\end{enumerate}
\end{lemma}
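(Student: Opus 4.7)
The plan is to establish (1) as an essentially tautological base-change computation: $C^{\bullet}(\A, \theta) \otimes_{\oP} A$ is literally the Aomoto complex $(\A^{\bullet} \otimes V \otimes A, d_{\overline{\varphi}(\ow)})$ associated with the specialized flat connection, and then to read off (2) as the further specialization to a $\K$-point.

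First I would observe that the underlying graded $\oP$-module of $C^{\bullet}(\A, \theta) = (\A^{\bullet} \otimes V \otimes \oP, d_{\ow})$ is free, being the $\oP$-linear extension of $\A^{\bullet} \otimes V$. The covariant differential $d_{\ow} = d + \ad_{\ow}$ is $\oP$-linear: the first summand touches only the $\A$-factor, while $\ad_{\ow}$ is left multiplication by the element $\ow \in \A^1 \otimes \bb \otimes \oP$ combined with the Lie action $\theta$, and both operations commute with the $\oP$-action. Consequently, $C^{\bullet}(\A, \theta) \otimes_{\oP} A$ has underlying graded $A$-module $\A^{\bullet} \otimes V \otimes A$, with differential $(d + \ad_{\ow}) \otimes_{\oP} \id_A$.

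Next, by the naturality statement of Lemma \ref{lem:covder}\eqref{cd3} applied to the $\sDGL$-morphism
\[
\id_{\A} \otimes \id_{V \rtimes_{\theta} \bb} \otimes \overline{\varphi}\colon \A^{\bullet} \otimes (V \rtimes_{\theta} \bb) \otimes \oP \longrightarrow \A^{\bullet} \otimes (V \rtimes_{\theta} \bb) \otimes A \, ,
\]
this base-changed differential is identified with $d + \ad_{\overline{\varphi}(\ow)} = d_{\overline{\varphi}(\ow)}$ on the $V$-summand (and the flatness of $\overline{\varphi}(\ow)$ is automatic, by naturality of the Maurer--Cartan equation). Taking cohomology of the resulting $A$-cochain complex yields part \eqref{rf2}. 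Part \eqref{rf3} is then obtained by specializing to $A = \K$ with $\overline{\varphi}$ the evaluation at the $\K$-point $\overline{\rho}$; the resulting identification $H^i(C^{\bullet}(\A, \theta) \otimes_{\oP} \K) \cong H^i(\A^{\bullet} \otimes V, d_{\overline{\rho}})$ has $\K$-dimension $\beta_i(\A, \overline{\rho})$ by Definition \ref{def:resv}, and membership in $\R^i_r(\A, \theta)$ follows directly.

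I do not foresee any serious obstacle; this is a formal verification using only the $\oP$-linearity of $d_{\ow}$ and the functoriality of the covariant derivative under Lie-algebra base change. The only subtlety worth flagging is that in \eqref{rf3} the map $\overline{\varphi}$ need not be local (the trivial connection is the unique $\K$-point killed by $\overline{\m}$), but the argument of \eqref{rf2} applies verbatim to any $\overline{\varphi} \in \Hom_{\alg}(\oP, \K)$, since flatness of $\ow$ alone drives the construction of $d_{\ow}$, and flatness is preserved under any $\sDGL$-morphism.
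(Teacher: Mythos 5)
Your proof is correct and follows essentially the same route as the paper, which disposes of part (1) as a "straightforward direct verification" that $C^{\bullet}(\A,\theta)\otimes_{\oP}A$ is the Aomoto complex of $\overline{\varphi}(\ow)$, and of part (2) by the specialization to $\K$-points already recorded in \S\ref{ss14}. Your explicit appeal to the freeness of the components, the $\oP$-linearity of $d_{\ow}$, and Lemma \ref{lem:covder}\eqref{cd3} simply fills in the details the paper leaves implicit, and your remark that part (2) needs only $\overline{\varphi}\in\Hom_{\alg}(\oP,\K)$ rather than a local map is an accurate observation.
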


\begin{proof}
A straightforward direct verification shows that the $A$--cochain complex $C^{\bullet} (\A, \theta) \otimes_{\oP} A$
is naturally isomorphic to the Aomoto complex of $\overline{\varphi}(\overline{\omega}) \in \F_{\loc} (\A \otimes \bb \otimes A)$,
$(\A^{\bullet} \otimes V \otimes A, d_{\overline{\varphi} (\overline{\omega})})$. Part \eqref{rf2} follows.

Part \eqref{rf3}. By Definition \ref{def:resv}, 
$\overline{\rho} \in \R^i_r (\A, \theta) \eqv \dim_{\K} H^i (C^{\bullet} (\A, \theta) \otimes_{\oP} \K) \ge r$, 
since the $\K$--cochain complex 
$(\A^{\bullet}  \otimes V, d_{\overline{\rho}})$ is naturally isomorphic to $C^{\bullet} (\A, \theta) \otimes_{\oP} \K$;
see \S \ref{ss14}.
\end{proof}

\subsection{Inverse limits} 
\label{ss72}

To pursue our analysis, we will also need the $1$--minimal model maps 
$\Omega^{\bullet}(X, \K) \stackrel{f_1}{\longleftarrow} \cN \stackrel{\overline{f_1}}{\longrightarrow} \A^{\bullet}$ from
\S\ref{ss52}, that give the canonical identification $\wR \stackrel{\sim}{\leftrightarrow} \woR$ from Remark \ref{rem:compliso}.
Given a $\K$--algebra surjection $\psi : \wR \surj S$, like in Remark \ref{rem:schange}, we obtain in this way another 
$\K$--algebra surjection, $\overline{\psi} : \woR \surj S$. Note that $S= \lim \limits_{\leftarrow} A_n$, where 
$A_n := S/\m_S^n \in \Ob (\sArt)$. 

As noted in Remark \ref{rem:schange}, $\psi = \{ \psi_n \} \in \lim \limits_{\leftarrow} \Hom_{\loc}(P, A_n)$, where each
$\psi_n \in \Hom_{\loc}(P, A_n)$ is onto. By similar considerations, 
$\overline{\psi} = \{ \overline{\psi}_n \} \in \lim \limits_{\leftarrow} \Hom_{\loc}(\oP, A_n)$, and each
$\overline{\psi}_n$ is onto. 

We know from \S\ref{ss12} that $\overline{\psi}_n (\overline{\omega}) \in \F_{\loc}( \A \otimes \bb \otimes A_n)$, for all $n$, and
$\{ \overline{\psi}_n (\overline{\omega}) \} \in \lim \limits_{\leftarrow} \F_{\loc}( \A \otimes \bb \otimes A_n)$.
Applying Theorem \ref{thm:aflat} to $\overline{f_1}$, we infer that $\overline{\psi}_n (\overline{\omega})= \overline{f_1} (\omega'_n)$,
for all $n$, and $\omega' := \{ \omega'_n \} \in \lim \limits_{\leftarrow} \F_{\loc}( \cN \otimes \bb \otimes A_n)$. Set
$\omega_n := f_1 (\omega'_n)$ and $\omega := \{ \omega_n \} \in \lim \limits_{\leftarrow} \F_{\loc}(\Omega (X, \K) \otimes \bb \otimes A_n)$.  
As remarked in \S\ref{ss43}, $\{ \mon (\omega_n) \} \in \lim \limits_{\leftarrow} \Hom_{\gr} (G, \eexp (\bb \otimes \m_{A_n}))$.

The fact that $\psi$ and $\overline{\psi}$ are canonically identified translates to the following property.

\begin{lemma}
\label{lem:phid}
With notation as above, $\mon (\omega_n)$ is equal to $\BB (\psi_n) \circ u$ in $\Hom_{\gr}(G, \BB (A_n))$, for all $n$,
where $u$ is the universal representation from \S\ref{ss51}.
\end{lemma}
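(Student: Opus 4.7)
The plan is to simply unwind the definition of the identification $\wR \stackrel{\sim}{\leftrightarrow} \woR$ given by Proposition \ref{prop:fart} and Remark \ref{rem:compliso}, read off what this identification says on the level of Artin quotients, and match it against the tautological bijection \eqref{eq=ubij} for $P$ on one side and the tautological bijection $\Hom_{\loc}(\oP, A) \equiv \F_{\loc}(\A \otimes \bb \otimes A)$ of \S\ref{ss12} on the other.

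First I would recall how $\overline{\psi}$ is defined from $\psi$. By Remark \ref{rem:compliso}, the identification $\wR \stackrel{\sim}{\leftrightarrow} \woR$ is, via Artin approximation, the equivalence of functors of Artin rings produced by Proposition \ref{prop:fart}. Concretely, for any $A \in \Ob(\sArt)$, an element $\overline{\varphi} \in \Hom_{\loc}(\oP, A)$ is identified with $\varphi \in \Hom_{\loc}(P, A)$ exactly when
\[
\mon\bigl(f_1(\overline{f_1}^{-1}(\overline{\varphi}(\overline{\omega})))\bigr) = \BB(\varphi) \circ u
\]
as homomorphisms $G \to \eexp(\bb \otimes \m_A) \subseteq \BB(A)$; here the left-hand side is the image of $\overline{\varphi}(\overline{\omega})$ under the natural bijection of Proposition \ref{prop:fart}, and the right-hand side is the image of $\varphi$ under \eqref{eq=ubij} (using \eqref{eq=bsemi} together with the fact that $\BB(\varphi) \circ u$ lands in the unipotent factor $\eexp(\bb \otimes \m_A)$ because $\varphi$ is local).

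Next I would apply this to $A = A_n$ and the pair $(\psi_n, \overline{\psi}_n)$. By construction of $\overline{\psi}$ from $\psi$ via the canonical identification $\wR \stackrel{\sim}{\leftrightarrow} \woR$, the pair $(\psi_n, \overline{\psi}_n)$ corresponds under the Artin-ring equivalence above. Hence
\[
\mon\bigl(f_1(\overline{f_1}^{-1}(\overline{\psi}_n(\overline{\omega})))\bigr) = \BB(\psi_n) \circ u.
\]
The definitions preceding the lemma give $\omega'_n = \overline{f_1}^{-1}(\overline{\psi}_n(\overline{\omega}))$ (via Theorem \ref{thm:aflat} applied to $\overline{f_1}$) and $\omega_n = f_1(\omega'_n)$, so the left-hand side is $\mon(\omega_n)$, and the lemma follows.

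The only potential obstacle is a bookkeeping one: one must check that the identification $\wR \stackrel{\sim}{\leftrightarrow} \woR$ really does restrict in the correct way to each Artin quotient $\Hom_{\loc}(-, A_n)$, so that the level-$n$ identification genuinely matches $\psi_n$ with $\overline{\psi}_n$. But this is immediate from Artin approximation and the naturality statement in Proposition \ref{prop:fart}: passage to Artin quotients commutes with the bijection, since both sides are compatible with the tower $\{A_{n+1} \surj A_n\}$ and with local $\K$-algebra morphisms. So the proof really does reduce to matching the two tautological parametrizations of $\Hom_{\loc}(P, A_n)$ and $\Hom_{\loc}(\oP, A_n)$ through the equivalence of Proposition \ref{prop:fart}.
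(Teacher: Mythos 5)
Your proposal is correct and follows essentially the same route as the paper: both unwind the identification $\wR \stackrel{\sim}{\leftrightarrow} \woR$ as the equivalence of functors of Artin rings from Proposition \ref{prop:fart}, apply it levelwise to the pair $(\psi_n, \overline{\psi}_n)$, and conclude by noting that $\overline{f}_1^{-1}(\overline{\psi}_n(\overline{\omega})) = \omega'_n$ and $f_1(\omega'_n) = \omega_n$ by construction. The compatibility with the tower $\{A_{n+1} \surj A_n\}$ that you flag at the end is handled in the paper by working directly in the inverse limits $\lim\limits_{\leftarrow} \Hom_{\loc}(P, A_n)$ and $\lim\limits_{\leftarrow} \Hom_{\loc}(\oP, A_n)$, which is the same naturality point.
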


\begin{proof}
As explained in Remark \ref{rem:compliso}, the canonical identification $\wR \stackrel{\sim}{\leftrightarrow} \woR$ corresponds to
the natural identification between the two functors of Artin rings, constructed in Proposition \ref{prop:fart}. Therefore, we know that
$\{ \psi_n \} \in \lim \limits_{\leftarrow} \Hom_{\loc}(P, A_n)$ is identified with 
$\{ \overline{\psi}_n \} \in \lim \limits_{\leftarrow} \Hom_{\loc}(\oP, A_n)$, via the natural bijection from Proposition \ref{prop:fart}. 
In other words, we have the equality 
$\{ \BB (\psi_n) \circ u \}= \{ (\mon \circ f_1 \circ \overline{f}_1^{-1})(\overline{\psi}_n (\overline{\omega})) \}$ in
$\lim \limits_{\leftarrow} \Hom_{\gr}(G, \BB (A_n))$, according to Proposition \ref{prop:fart} and the discussion preceding it.

By construction, $\overline{f}_1^{-1} (\overline{\psi}_n (\overline{\omega}))= \omega'_n$ and $f_1 (\omega'_n) =\omega_n$,
for all $n$, proving our claim.
\end{proof}

We will need one more result on inverse limits.

\begin{lemma}
\label{lem:mlinv}
Let $S\in \Ob (\sAlg)$ and $\m_S \subseteq S$ be an ideal. Assume that $S$ is complete and Hausdorff for the
$\m_S$--adic topology, and $\dim_{\K}(S/\m_S^n)< \infty$, for all $n$.  Let $C^{\bullet}$ be a right
$S$--cochain complex. Suppose that $C^i$ is finitely generated free over $S$ for $i\le q$ and $C^{q+1}$ is
$\m_S$--Hausdorff. Then the natural $S$--linear map 
$$H^{\bullet} (C) \rightarrow \lim \limits_{\leftarrow} H^{\bullet} (C\otimes_S (S/\m_S^n))$$ 
is an isomorphism for $\bullet \le q$ and a monomorphism for $\bullet =q+1$.
\end{lemma}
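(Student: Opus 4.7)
The plan is to factor the natural comparison map through the cochain-level inverse limit. Write $A_n := S/\m_S^n$, $\overline{C}_n^\bullet := C^\bullet \otimes_S A_n$, and $D^\bullet := \varprojlim_n \overline{C}_n^\bullet$; the map in question then factors as
$$H^i(C) \longrightarrow H^i(D) \longrightarrow \varprojlim_n H^i(\overline{C}_n).$$
I will show that the first arrow has the stated iso/mono behaviour and that the second is an isomorphism in the range $i \le q+1$.

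For the first arrow I use that $C^i$ is finitely generated free over the complete Hausdorff ring $S$, which gives $C^i \cong \varprojlim_n C^i / C^i \m_S^n = D^i$ for all $i \le q$, while the $\m_S$-Hausdorff assumption on $C^{q+1}$ yields only an injection $C^{q+1} \hookrightarrow D^{q+1}$. Comparing cycles and boundaries degree by degree is then routine: in degrees $\le q$ both $\ker d^i$ and $\im d^{i-1}$ coincide for $C$ and $D$, so $H^i(C) \cong H^i(D)$; in degree $q+1$, cycles of $C$ are precisely those cycles of $D$ which happen to lie in the subspace $C^{q+1}$, and boundaries coincide since $C^q = D^q$, whence $H^{q+1}(C) \hookrightarrow H^{q+1}(D)$.

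For the second arrow I invoke the Milnor short exact sequence
$$0 \to {\varprojlim}^{\!1}\, H^{i-1}(\overline{C}_n) \longrightarrow H^i(D) \longrightarrow \varprojlim_n H^i(\overline{C}_n) \to 0,$$
which applies because each system $\{\overline{C}_n^j\}_n$ has surjective transition maps and is therefore Mittag--Leffler, with vanishing ${\varprojlim}^{\!1}$ at the cochain level. The crucial remaining point is that ${\varprojlim}^{\!1} H^{i-1}(\overline{C}_n) = 0$ for $i \le q+1$. For $i - 1 \le q - 1$, the module $\overline{C}_n^{i-1}$ is a free $A_n$-module of finite rank and hence finite-dimensional over $\K$ (using $\dim_\K A_n < \infty$), and so is its subquotient $H^{i-1}(\overline{C}_n)$. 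The delicate case is $i - 1 = q$: although $\overline{C}_n^{q+1}$ is not assumed finite-dimensional, one still has $H^q(\overline{C}_n) \subseteq \ker(\overline{d}^q) \subseteq \overline{C}_n^q$, and $\overline{C}_n^q$ remains finite-dimensional. Thus in every relevant degree the inverse system $\{H^{i-1}(\overline{C}_n)\}_n$ consists of finite-dimensional $\K$-vector spaces, is automatically Mittag--Leffler, and has trivial ${\varprojlim}^{\!1}$. This last verification, namely the finite-dimensionality of $H^q(\overline{C}_n)$ with no finiteness hypothesis on $C^{q+1}$ beyond Hausdorffness, is the step I expect to require the most care.
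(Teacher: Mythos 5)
Your proof is correct and follows essentially the same route as the paper: both factor the comparison map through the cochain-level inverse limit $D^{\bullet}=\varprojlim C^{\bullet}\otimes_S(S/\m_S^n)$, use completeness/Hausdorffness to see that $C\to D$ is an isomorphism in degrees $\le q$ and injective in degree $q+1$, and then use Mittag--Leffler (from $\dim_{\K}(S/\m_S^n)<\infty$ plus finite generation of $C^{\le q}$) to identify $H^{\le q+1}(D)$ with $\varprojlim H^{\le q+1}(\overline{C}_n)$ --- you package the last step as vanishing of $\varprojlim^1 H^{i-1}(\overline{C}_n)$ via the Milnor sequence, while the paper runs the equivalent argument directly on the inverse systems of cocycles and coboundaries. (Two harmless imprecisions: $H^{q}(\overline{C}_n)$ is a sub\emph{quotient}, not a subspace, of $\overline{C}_n^{q}$; and "cycles of $C$ are precisely the cycles of $D$ lying in $C^{q+1}$" overstates one inclusion, but only the direction you actually use is needed for the monomorphism.)
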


\begin{proof}
This is a consequence of standard facts about exactness properties of sequential inverse limits (see for instance 
\cite[pp. 126--132]{Sw}). We first claim that the natural map,
$H^{\bullet} (\lim \limits_{\leftarrow} (C/C \cdot \m_S^n)) \rightarrow \lim \limits_{\leftarrow} H^{\bullet}(C/C \cdot \m_S^n)$,
is an isomorphism, for $\bullet \le q+1$. To verify this, denote by $\{ C^{\bullet}_n \}$ the inverse system of $S$--cochain complexes
$\{ C^{\bullet}/C^{\bullet} \cdot \m_S^n \}$. Let $\{ Z^{\bullet}_n \}$, $\{ B^{\bullet}_n \}$ and $\{ H^{\bullet}_n \}$
be the associated inverse systems of cocycles, coboundaries and cohomologies. Clearly, the natural map identifies
$Z^{\bullet} (\lim \limits_{\leftarrow} C_n)$ with $\lim \limits_{\leftarrow} Z^{\bullet}_n$, in such a way that
$B^{\bullet} (\lim \limits_{\leftarrow} C_n) \subseteq \lim \limits_{\leftarrow} B^{\bullet}_n$. Let $d$ be the differential of
$\lim \limits_{\leftarrow} C_n$. 

Since $S/\m_S^n$ is finite--dimensional over $\K$, for all $n$, and  $C^{\le q}$ is finitely generated over $S$, the inverse systems
$\{ Z^{\le q}_n \}_n$  and $\{ B^{\le q+1}_n \}_n$ have the Mittag--Leffler property. Due to this fact, we obtain exact sequences,
\[
0\to \lim \limits_{\leftarrow} Z^{\bullet}_n \longrightarrow \lim \limits_{\leftarrow} C^{\bullet}_n \stackrel{d}{\longrightarrow}
\lim \limits_{\leftarrow} B^{\bullet +1}_n \to 0 \, ,
\]
for $\bullet \le q$, and
\[
0\to \lim \limits_{\leftarrow} B^{\bullet}_n \longrightarrow \lim \limits_{\leftarrow} Z^{\bullet}_n
\longrightarrow \lim \limits_{\leftarrow} H^{\bullet}_n \to 0 \, ,
\]
for $\bullet \le q+1$.
We infer that $B^{\le q+1} (\lim \limits_{\leftarrow} C_n) = \lim \limits_{\leftarrow} B^{\le q+1}_n$, and the natural map,
\[
H^{\le q+1} (\lim \limits_{\leftarrow} C_n) \longrightarrow \lim \limits_{\leftarrow} H^{\le q+1}(C_n) \, ,
\]
is an isomorphism, as asserted.

In order to finish the proof of the lemma, it is therefore enough to check that the natural cochain map given by 
$\m_S$--adic completion, $\kappa: C^{\bullet} \to \lim \limits_{\leftarrow}  C^{\bullet}\otimes_S (S/\m_S^n)$,
is a $q$--equivalence. This in turn follows from the fact that $\kappa$ is an isomorphism in degree $\bullet \le q$,
and a monomorphism for $\bullet =q+1$, due to our hypotheses.
\end{proof}

\begin{example}
\label{ex:l96}
Let $S$ be a $\K$--algebra satisfying the assumptions from Lemma \ref{lem:mlinv} (e.g., a quotient algebra of $\wR$,
like in Remark \ref{rem:schange}). Given $\varphi \in \Hom_{\alg} (P, S)$ (respectively $\overline{\varphi} \in \Hom_{\alg} (\oP, S)$),
set $C^{\bullet}= C^{\bullet}(X, \iota)\otimes_P S$ (respectively $C^{\bullet}= C^{\bullet}(\A, \theta)\otimes_{\oP} S$).
Denote by $c_i$ the cardinality of the $i$--cells of $X$ (respectively of a $\K$--basis of $\A^i$). Set $\ell= \dim_{\K} V$. 
Then $C^i$ is $S$--isomorphic to $P^{\ell c_i} \otimes_P S$ (respectively to the direct sum 
$S^{(\ell c_i)}= \oP^{(\ell c_i)} \otimes_{\oP} S$) for all $i$. When both $X$ and $\A$ are $q$--finite, we infer that $C^{\le q}$
is finitely generated free over $S$, in both cases. In the first case, the natural map $P^{\ell c_i} \otimes_P S \to S^{\ell c_i}$
is an embedding for all $i$, since $P$ is Noetherian (cf. \cite[pp.~31--32]{CE}). We infer that $C^i$ embeds into the direct product
$S^{\ell c_i}$, in both cases. In particular, $C^i$ is $\m_S$--Hausdorff for all $i$. Hence, the $S$--cochain complex  $C^{\bullet}$
meets all the requirements from Lemma \ref{lem:mlinv}.
\end{example}

\subsection{Comparing the universal complexes}
\label{ss73}

We may now put things together to obtain the following conclusion, needed for the proof of our main results.

\begin{prop}
\label{prop:comp}
Let $\psi \in \Hom_{\alg} (\wR, S)$ be onto, identified with  $\overline{\psi} \in \Hom_{\alg} (\woR, S)$ by
the canonical isomorphism  $\wR \stackrel{\sim}{\leftrightarrow} \woR$.
Then there is an isomorphism of graded $S$--modules, 
$H^{\le q} (C (X, \iota) \otimes_P S) \cong H^{\le q} (C (\A, \theta) \otimes_{\oP} S)$, 
where $P\to S$ is induced by $\psi$ and $\oP \to S$ is induced by $\overline{\psi}$. 
\end{prop}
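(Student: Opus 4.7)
The plan is to establish the isomorphism at each finite Artin level $A_n := S/\m_S^n$ and then pass to the inverse limit. Both $C^{\bullet}(X,\iota) \otimes_P S$ and $C^{\bullet}(\A,\theta) \otimes_{\oP} S$ satisfy the hypotheses of Lemma \ref{lem:mlinv}, as verified in Example \ref{ex:l96}; so their cohomology in degrees $\le q$ is naturally the inverse limit of $H^{\le q}(C \otimes A_n)$. It therefore suffices to exhibit compatible $A_n$-linear isomorphisms $H^{\le q}(C(X,\iota) \otimes_P A_n) \cong H^{\le q}(C(\A,\theta) \otimes_{\oP} A_n)$ for each $n$.

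Fix $n$ and recall from \S\ref{ss72} the flat connections $\omega'_n \in \F_{\loc}(\cN \otimes \bb \otimes A_n)$, $\omega_n = f_1(\omega'_n)$, and $\overline{\psi}_n(\overline{\omega}) = \overline{f_1}(\omega'_n)$. Extend $f_1$ and $\overline{f_1}$ to the $q$-minimal model maps $f : \M \to \Omega(X,\K)$ and $\overline{f} : \M \to \A$ of \S\ref{ss26}, and use Theorem \ref{thm:aflat} applied to the $1$-equivalence $\cN \hookrightarrow \M$ to lift $\omega'_n$ to a flat connection $\omega''_n \in \F_{\loc}(\M \otimes \bb \otimes A_n)$ satisfying $f(\omega''_n) = \omega_n$ and $\overline{f}(\omega''_n) = \overline{\psi}_n(\overline{\omega})$. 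Theorem \ref{thm:adr} applied to the $q$-equivalences $f$ and $\overline{f}$ then yields a zig-zag of $A_n$-linear $q$-equivalences of Aomoto complexes
\[
(\Omega(X,\K) \otimes V \otimes A_n, d_{\omega_n}) \xleftarrow{\;f\;} (\M \otimes V \otimes A_n, d_{\omega''_n}) \xrightarrow{\;\overline{f}\;} (\A \otimes V \otimes A_n, d_{\overline{\psi}_n(\overline{\omega})}).
\]
Combining this with Lemma \ref{cor:phispec}(1) on the left, the Sullivan--Gomez Tato Theorem \ref{thm:twdr}, the identification $L(\psi_n) = L(\omega_n)$ supplied by Lemma \ref{lem:phid} (which records $\mon(\omega_n) = \BB(\psi_n) \circ u$), and Lemma \ref{cor:respec}(1) on the right, produces the desired $A_n$-linear isomorphism in degrees $\le q$.

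The main obstacle is checking that this composite is natural with respect to the canonical projections $A_{n+1} \surj A_n$, so that taking $\lim \limits_{\leftarrow}$ gives an honest $S$-module isomorphism and not merely a family of $A_n$-isomorphisms. Each link of the chain is individually natural in the Artin ring: Lemmas \ref{cor:phispec} and \ref{cor:respec} are natural under specialization, Artin-level parallel transport and Theorem \ref{thm:twdr} commute with the group homomorphisms $\BB(A_{n+1}) \to \BB(A_n)$ (cf.\ \S\ref{ss43}), Theorem \ref{thm:adr} is natural in $A$, and the compatibility of the towers $\{\psi_n\}$ and $\{\overline{\psi}_n\}$ under the canonical identification $\wR \cong \woR$ is precisely the content of Lemma \ref{lem:phid}. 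With naturality in hand, Lemma \ref{lem:mlinv} (whose hypotheses are verified in Example \ref{ex:l96}) delivers the required graded $S$-module isomorphism.
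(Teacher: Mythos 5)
Your proposal is correct and follows essentially the same route as the paper: reduce to the Artin levels $A_n=S/\m_S^n$ via Lemma \ref{lem:mlinv} and Example \ref{ex:l96}, then chain together Lemma \ref{cor:respec}, Theorem \ref{thm:adr} through the common $q$--minimal model $\M$, Theorem \ref{thm:twdr}, Lemma \ref{lem:phid}, and Lemma \ref{cor:phispec}. (The only cosmetic difference is that you invoke Theorem \ref{thm:aflat} to ``lift'' $\omega'_n$ from $\cN$ to $\M$, whereas the paper simply regards $\omega'_n$ as a flat connection on $\M$ via the inclusion $\cN\hookrightarrow\M$; your explicit attention to naturality in $n$ is a welcome addition rather than a deviation.)
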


\begin{proof}
By Example \ref{ex:l96}, Lemma \ref{lem:mlinv} provides $S$--isomorphisms, 
\[
H^{\le q}(C (X, \iota) \otimes_P S) \cong \lim \limits_{\leftarrow} H^{\le q}(C (X, \iota) \otimes_P A_n)
\]
and
\[
H^{\le q}(C (\A, \theta) \otimes_{\oP} S) \cong \lim \limits_{\leftarrow} H^{\le q}(C (\A, \theta) \otimes_{\oP} A_n)\, ,
\]
where $P\to A_n$ is $\psi_n$ and $\oP \to A_n$ is $\overline{\psi}_n$ . 

The graded $S$--modules 
$\lim \limits_{\leftarrow} H^{\le q}(\A \otimes V \otimes A_n, d_{\overline{\psi}_n (\overline{\omega})})$ and
$\lim \limits_{\leftarrow} H^{\le q}(C (\A, \theta) \otimes_{\oP} A_n)$ are isomorphic, by Lemma \ref{cor:respec}\eqref{rf2}.

By virtue of Theorem \ref{thm:adr}, we get a graded $S$--isomorphism, 
$\lim \limits_{\leftarrow} H^{\le q}(\M \otimes V \otimes A_n, d_{\omega'_n}) \cong 
\lim \limits_{\leftarrow} H^{\le q}(\A \otimes V \otimes A_n, d_{\overline{\psi}_n (\overline{\omega})})$, since
$\overline{f}(\omega'_n)= \overline{f_1}(\omega'_n)= \overline{\psi}_n (\overline{\omega})$ for all $n$, by construction.
A similar argument gives another graded $S$--isomorphism,  
$\lim \limits_{\leftarrow} H^{\le q}(\M \otimes V \otimes A_n, d_{\omega'_n}) \cong 
\lim \limits_{\leftarrow} H^{\le q}(\Omega (X, \K)  \otimes V \otimes A_n, d_{\omega_n})$.

We may invoke Theorem \ref{thm:twdr} to obtain a graded $S$--isomorphism,  
$$\lim \limits_{\leftarrow} H^{\le q}(\Omega (X, \K)  \otimes V \otimes A_n, d_{\omega_n}) \cong
\lim \limits_{\leftarrow} H^{\le q}(X, L(\omega_n)) \, ,$$ 
where each local system $L(\omega_n)$ is given by the representation
$\iota_{A_n} \circ \mon (\omega_n): G \to \GL_{A_n} (V\otimes A_n)$, cf. \S\ref{ss55}. At this point, we may resort to
Lemma \ref{lem:phid} to identify $L(\omega_n)$ with the local system $L(\psi_n)$, associated to $\psi_n \in \Hom_{\alg}(P, A_n)$
by the construction from \S\ref{ss51}, for all $n$.

Finally, by virtue of Lemma \ref{cor:phispec}\eqref{tf2}, there is a graded $S$--isomorphism,
$$\lim \limits_{\leftarrow} H^{\le q}(X, L(\psi_n)) \cong \lim \limits_{\leftarrow} H^{\le q}(C (X, \iota) \otimes_P A_n)\, ,$$ 
since each $\psi_n \in \Hom_{\alg}(P, A_n)$ is surjective, as noted in \S\ref{ss72}.

These graded $S$--isomorphisms together establish our claim.
\end{proof}

\begin{remark}
\label{rem:finhyp}
Our goal in this paper is to compute topological jump loci near $1\in \cH (G, \BB)$, up to a fixed degree $q\ge 1$,
replacing $X$ by an algebraic representative of its $q$--type, $\A$ (in the sense of Definition \ref{def:qtype}).
In order to make meaningful local statements, we need an affine variety structure on the ambient space $\cH (G, \BB)$,
which leads to our finiteness assumption on $X^{(1)}$. Similar considerations about the algebraic ambient space
$\F (\A, \bb)$ force the finiteness of $\A^1$. In concrete terms, we want to avoid examples like $X= \bigvee_{n\in \N} S^1$
and $\A^{\bullet}= (H^{\bullet}X, d=0)$ (having the same $\infty$--type, by formality), where things go wrong already
in the rank one case (i.e., for $\iota= \id_{\C^{\times}}$).

Similar simple examples show the necessity of our higher finiteness assumptions from Definition \ref{def:fin}. The reason is 
that we need to pass from the Artinian case from Theorem \ref{thm:adr} to the pro--Artinian setup, like in 
Proposition \ref{prop:comp}. This in turn requires the conclusion of Lemma \ref{lem:mlinv} in all degrees up to $q$,
for both universal complexes. Let us examine for any $q\ge 2$ the example when $X= S^1 \vee (\bigvee_{n\in \N} S^q)$
and $\A^{\bullet}= (H^{\bullet}X, d=0)$ (again by formality), in the rank one case. Clearly, $\oP= \C [t]$,
$\widehat{\oP}=\woR= \C [[t]]:= S$, and we may take $\overline{\psi}=\id_S$ in Proposition \ref{prop:comp}. Therefore,
we need  the conclusion of Lemma \ref{lem:mlinv} in degree $q$, for 
$C^{\bullet}= (H^{\bullet}X \otimes S, d_{\overline{\omega}} \otimes_{\oP} S)$. 

It is straightforward to check that $d_{\overline{\omega}}=0$ on $H^+X \otimes \oP$, using \eqref{eq=funiv}. Hence, 
the conclusion of Lemma \ref{lem:mlinv} in degree $q$ is simply that $C^q$ is $(t)$--complete and Hausdorff. But
$C^q=H^qX \otimes S$ cannot be complete, since $H^qX$ is infinite--dimensional. This shows that both $\A$ and $X$
must be supposed to be $q$--finite.
\end{remark}

\section{Proof of the main results}
\label{sec:pfs}

Theorem \ref{thm:km0} was proved in Section \ref{sec:tdr}. We continue with the isomorphism theorem for analytic germs of
cohomology jump loci, Theorem \ref{thm:8main}.

\subsection{Jump loci}
\label{ss100}

We first examine the natural (reduced) affine structure of characteristic and resonance varieties. To this end, we start by recalling
the definition of the jump loci of a $P$--cochain complex $C^{\bullet}$, where $P$ is an affine $\K$--algebra. Given a field
extension, $\K \subseteq \bK$, and $i,r\ge 0$, set
\begin{equation}
\label{eq:specjump}
\V^i_r (C, \bK) = \{ \rho\in \Hom_{\K-\alg}(P, \bK) \; \mid \; \dim_{\bK} H^i(C\otimes_P \bK) \ge r \}\, .
\end{equation}

Under suitable assumptions, these loci are Zariski--closed, in the following sense.

\begin{lemma}
\label{lem:zarspec}
Assume that $C^i$ is finitely generated free over $P$ for $i\le q$, and $C^{q+1}$ is either 
a direct sum $P^{(c)}$ or  a direct product $P^c$, for some set $c$. Then, for $i\le q$ and $r\ge 0$,
there is an ideal $\E^i_r \subseteq P$ such that $\rho \in \V^i_r (C, \bK)$ if and only if $\rho (\E^i_r)=0$. 
\end{lemma}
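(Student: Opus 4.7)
The plan is to reduce the jump loci to Fitting-ideal loci of the boundary matrices, exactly as in the classical case of a complex of finitely generated projective modules, with one mild adaptation to accommodate the possibly infinite rank of $C^{q+1}$. First I would fix a $P$-basis of each $C^i$ for $i \le q$, together with the given indexing of $C^{q+1}$ by $c$; each differential $d^i$ with $i \le q$ is then represented by a matrix $M^i$ with entries in $P$. For $i < q$ this matrix is finite; for $i = q$ it has finitely many columns (since $C^q$ is finitely generated free) but possibly infinitely many rows. For every $s \ge 0$, the $(s+1)\times(s+1)$ minors of $M^i$ generate an ideal $\mathfrak{a}^i_s \subseteq P$, honestly an ideal even when infinitely many generators are needed.

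Next, for any $\rho \in \Hom_{\K-\alg}(P, \bK)$, the specialized complex $C \otimes_P \bK$ has finite-dimensional terms in degrees $\le q$, and in those degrees the standard rank-nullity identity gives
\[
\dim_{\bK} H^i(C \otimes_P \bK) = n_i - \rank(d^i \otimes \bK) - \rank(d^{i-1} \otimes \bK),
\]
where $n_i := \rank_P C^i$ and $d^{-1} := 0$. For $i = q$ the image of $d^q \otimes \bK$ is still finite-dimensional, with dimension equal to the rank of the finite-column matrix $\rho(M^q)$, independent of whether $C^{q+1}$ is interpreted as a direct sum or as a direct product. Hence $\rank(d^i \otimes \bK) \le s$ is equivalent to $\rho(\mathfrak{a}^i_s) = 0$, the familiar Fitting characterization of rank.

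Combining, $\dim_{\bK} H^i(C \otimes_P \bK) \ge r$ holds exactly when there exist integers $s, t \ge 0$ with $s + t \le n_i - r$, $\rho(\mathfrak{a}^i_s) = 0$, and $\rho(\mathfrak{a}^{i-1}_t) = 0$. I would then define
\[
\E^i_r := \prod_{\substack{s, t \ge 0 \\ s + t \le n_i - r}} \bigl(\mathfrak{a}^i_s + \mathfrak{a}^{i-1}_t\bigr),
\]
a finite product of ideals (the indexing set is finite, with the empty-product convention $\E^i_r = P$ when $n_i < r$), and conclude by the elementary observation that $\rho(\mathfrak{b} \cdot \mathfrak{c}) = 0 \eqv \rho(\mathfrak{b}) = 0$ or $\rho(\mathfrak{c}) = 0$ for ideals $\mathfrak{b}, \mathfrak{c} \subseteq P$. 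The main obstacle I anticipate is the $i = q$ case, where $M^q$ may be an infinite matrix; since every minor is a polynomial in finitely many entries and $\rank(d^q \otimes \bK) \le n_q < \infty$, the Fitting description of rank still applies and the argument goes through unchanged.
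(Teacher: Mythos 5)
Your overall strategy coincides with the paper's: reduce $\dim_{\bK} H^i(C\otimes_P\bK)$ to the ranks of the two adjacent differentials, characterize each rank condition by the vanishing of minors, and assemble the resulting ideals (your explicit product-of-sums formula for $\E^i_r$ just makes precise what the paper leaves implicit). The gap is in the one genuinely delicate step, namely $i=q$ with $C^{q+1}=P^c$ a direct product. There you assert that $\dim_{\bK}\im(d^q\otimes_P\bK)$ equals the rank of the scalar matrix $\rho(M^q)$ ``independent of whether $C^{q+1}$ is interpreted as a direct sum or as a direct product,'' but these two numbers are computed in different modules: the image of $d^q\otimes_P\bK$ sits in $C^{q+1}\otimes_P\bK = P^c\otimes_P\bK$, whereas the rank of $\rho(M^q)$ is the dimension of the image of the composite map into $\bK^c$, and the canonical map $j\colon P^c\otimes_P\bK\to\bK^c$ is not an isomorphism in general. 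A priori one only gets $\dim_{\bK}\im(d^q\otimes_P\bK)\ge\rank\rho(M^q)$, hence $\dim_{\bK}\ker(d^q\otimes_P\bK)\le n_q-\rank\rho(M^q)$; if the inequality were strict for some $\rho$, the locus cut out by your minors would be strictly larger than $\V^q_r(C,\bK)$ and the characterization would fail.

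What is needed, and what the paper supplies at exactly this point, is the injectivity of $j$ (quoted from Cartan--Eilenberg), which guarantees that passing from $P^c\otimes_P\bK$ to $\bK^c$ loses nothing, so that the kernel of $d^q\otimes_P\bK$ really is computed by the matrix over $\bK$. This is also why the hypothesis distinguishes the two cases $P^{(c)}$ and $P^c$: for a direct sum one has $P^{(c)}\otimes_P\bK=\bK^{(c)}$ on the nose (and the paper shortcuts further by factoring $d_q$ through a finite free summand $P^{(f)}$), while for a direct product the identification genuinely requires an argument. Your closing remark that every minor involves only finitely many entries addresses the infinitude of the rows of $M^q$ but not this base-change issue. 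With the injectivity of $j$ added, your argument closes up and is essentially the paper's proof.
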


\begin{proof}
Denote by $\{ d_i: C^i \to C^{i+1} \}_{i\ge 0}$ the differential. By linear algebra, it is enough to consider, for
$i\le q$, the associated elementary $P$--cochain complexes concentrated in degrees $i$ and $i+1$,
$C^i \stackrel{d_i}{\longrightarrow} C^{i+1}$, and to check for these the claim in degree $i$. This in turn is
obvious for $i<q$, given our finiteness assumptions: the corresponding ideals are generated by the appropriate
minors of the $P$--matrix of $d_i$. So, let $P^{(n)} \stackrel{d}{\longrightarrow} C$ be $P$--linear, where $n$
is finite and either $C=P^{(c)}$ or $C=P^c$.

In the first case, we may write $d=j\circ d'$, where  $P^{(n)} \stackrel{d'}{\longrightarrow} P^{(f)}$, with
$f\subseteq c$ finite. Since $j:  P^{(f)} \to P^{(c)}$ is split injective, the complexes associated to $d$ and $d'$ 
have the same jump loci in degree $i=q$, and we are done. In the second case, denote by $d(\rho)$ 
the $\bK$--specialization $d\otimes_P \bK : \bK^{(n)} \to P^c \otimes_P \bK$, and let $j:  P^c \otimes_P \bK \to \bK^c$
be the canonical map. Since $j$ is injective (cf. \cite[pp.~31--32]{CE}), we infer from \eqref{eq:specjump} that
\[
\rho \in \V^q_r (P^{(n)} \stackrel{d}{\longrightarrow} P^c, \bK) \eqv \dim_{\bK} \ker (j\circ d(\rho))\ge r \, .
\]
Since $n$ is finite, this in turn is equivalent to $\dim_{\bK} \ker (\pi_f \circ j\circ d(\rho))\ge r$, for all finite subsets $f\subseteq c$,
where $\pi_f: \bK^c \surj \bK^f$ is the canonical projection. Again, this condition translates to the vanishing of appropriate minors.
\end{proof}

Now, assume $\Omega^{\bullet}(X, \K) \simeq_q \A^{\bullet}$, where both $X$ and $\A$ are $q$--finite,
like in \S \ref{ss52}. As we saw in Example \ref{ex:l96}, both universal complexes, 
$C^{\bullet}(X, \iota)$ and $C^{\bullet}(\A, \theta)$, satisfy the assumptions from Lemma \ref{lem:zarspec}.

\begin{corollary}
\label{lem:redjump}
For $i\le q$, the following hold.
\begin{enumerate}
\item \label{rj1}
For all $r\ge 0$, there is an ideal, $\cI^i_r \subseteq P$, such that 
$\rho \in \V^i_r (C^{\bullet}(X, \iota), \bK) \subseteq \Hom_{\alg}(P, \bK)$ 
if and only if $\rho (\cI^i_r)=0$. In particular, 
$\V^i_r (X, \iota)= V(\cI^i_r ) \subseteq \cH (G, \BB)$.

\item \label{rj2}
For all $r\ge 0$, there is an ideal, $\overline{\cI}^i_r \subseteq \oP$, such that 
$\overline{\rho} \in \V^i_r (C^{\bullet}(\A, \theta), \bK) \subseteq \Hom_{\alg}(\oP, \bK)$ 
if and only if $\overline{\rho} (\overline{\cI}^i_r)=0$. In particular, 
$\R^i_r (\A, \theta)= V(\overline{\cI}^i_r) \subseteq \F (\A, \bb)$.

\item \label{rj3}
When $r>0$, both $\V^i_r (X, \iota)$ and $\R^i_r (\A, \theta)$ are void, excepting finitely many pairs $(i,r)$.
\end{enumerate}
\end{corollary}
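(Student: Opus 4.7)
The plan is to deduce all three parts of the corollary directly from Lemma \ref{lem:zarspec}, applied to the two universal cochain complexes, combined with the specialization identities already proved in Lemmas \ref{cor:phispec} and \ref{cor:respec}.

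First, I would check that both universal complexes satisfy the hypotheses of Lemma \ref{lem:zarspec}. As recorded in Example \ref{ex:l96}, the topological complex satisfies $C^i(X,\iota)\cong (V\otimes P)^{c_i} \cong P^{\ell c_i}$, where $c_i$ is the number of $i$-cells of $X$ and $\ell=\dim_\K V$. The $q$-finiteness of $X$ gives $c_i<\infty$ for $i\le q$, so $C^{\le q}(X,\iota)$ is finitely generated free over $P$; in degree $q+1$, $C^{q+1}(X,\iota)$ is a direct product $P^{\ell c_{q+1}}$ (since $\Hom_{\K G}$ turns coproducts in the source into products in the target), matching the form $P^c$. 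For the algebraic side, $C^i(\A,\theta)=\A^i\otimes V\otimes \oP$ is free of finite rank over $\oP$ for $i\le q$ by the $q$-finiteness of $\A$, and $C^{q+1}(\A,\theta)$ is the direct sum $\oP^{(\ell\dim\A^{q+1})}$, matching the form $P^{(c)}$; this covers the case when $\A^{q+1}$ is possibly infinite-dimensional.

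Next, for part \eqref{rj1}, I would apply Lemma \ref{lem:zarspec} to $C^\bullet(X,\iota)$ to extract, for each $i\le q$ and $r\ge 0$, an ideal $\cI^i_r\subseteq P$ whose vanishing locus (over any extension field $\bK$) cuts out $\V^i_r(C^\bullet(X,\iota),\bK)$. Specializing to $\bK=\K$ and invoking Lemma \ref{cor:phispec}\eqref{tf3}, a representation $\rho\in\cH(G,\BB)=\Hom_{\alg}(P,\K)$ lies in $\V^i_r(X,\iota)$ if and only if $\rho(\cI^i_r)=0$, so $\V^i_r(X,\iota)=V(\cI^i_r)$. Part \eqref{rj2} is obtained by the identical argument, applied to $C^\bullet(\A,\theta)$ over $\oP$, and then using Lemma \ref{cor:respec}\eqref{rf3} to identify the resulting specialization jump loci with $\R^i_r(\A,\theta)$.

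For part \eqref{rj3}, I would use the obvious dimension bound: $H^i(C^\bullet(X,\iota)\otimes_P \K)$ is a subquotient of $C^i(X,\iota)\otimes_P \K\cong \K^{\ell c_i}$, hence its dimension is at most $\ell c_i$; consequently $\V^i_r(X,\iota)=\emptyset$ whenever $r>\ell c_i$. The analogous bound $r\le \ell\dim_\K \A^i$ controls $\R^i_r(\A,\theta)$. Combined with the $q$-finiteness of $X$ and $\A$ (which, in the case $q=\infty$, amounts to finiteness of $X$ and of $\dim_\K \A$, and otherwise restricts $i$ to the finite range $i\le q$), only finitely many indices $i$ have $c_i>0$ or $\dim_\K \A^i>0$; for each such $i$ only finitely many $r>0$ can give a nonempty locus, establishing the claim. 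The argument is essentially bookkeeping, and the only mild subtlety is keeping straight which of the two shapes $P^{(c)}$ and $P^c$ appears in degree $q+1$ on each side; but this is immediate from the constructions.
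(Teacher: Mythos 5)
Your proposal is correct and follows essentially the same route as the paper: verify via Example \ref{ex:l96} that both universal complexes meet the hypotheses of Lemma \ref{lem:zarspec} (direct product shape $P^{\ell c_{q+1}}$ on the topological side, direct sum shape $\oP^{(\ell\dim\A^{q+1})}$ on the algebraic side), then combine that lemma with the specialization statements in Lemmas \ref{cor:phispec}\eqref{tf3} and \ref{cor:respec}\eqref{rf3} for parts \eqref{rj1}--\eqref{rj2}, and deduce part \eqref{rj3} from the finiteness of the universal cochains via the obvious subquotient dimension bound. The paper's proof is just a terser version of exactly this argument.
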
 

\begin{proof}
Parts \eqref{rj1} (respectively \eqref{rj2}) follow from Lemma \ref{cor:phispec}\eqref{tf3} (respectively  
Lemma \ref{cor:respec}\eqref{rf3}), and Lemma \ref{lem:zarspec}. Part \eqref{rj3}
is a direct consequence of definition \eqref{eq:specjump}, given
the finiteness properties of universal cochains.
\end{proof}

\begin{example}
\label{ex:nps}
Let $X$ be a formal CW--complex, connected and finite. We may take in the above corollary 
$\A^{\bullet}= (H^{\bullet}(X, \K), d=0)$. When $\iota=\id_{\C^{\times}}$, $\F (\A, \bb)=H^1(X, \C)$ and 
$\oP= \Sym (H_1(X, \C))$ is a polynomial algebra. The universal Aomoto complex is 
$C^{\bullet}( \A, \theta)= H^{\bullet}(X, \C) \otimes \oP$, with $\oP$--linear differential $d_{\overline{\omega}}$
sending $\alpha\otimes 1$ to $\sum_i a_i \cup \alpha \otimes a_i^*$, where $\{ a_i \}$ is a basis for $H^1(X, \C)$
and $\{ a_i^* \}$ is the dual basis; see \eqref{eq=funiv} and \eqref{eq=univaom}. When the CW--complex $X$ is minimal
(e.g., for an arrangement complement), this universal Aomoto complex was described in \cite{PS} as the linearization
of the equivariant cochain complex of the universal abelian cover of $X$, and the associated jump loci \eqref{eq:specjump}
were identified with the resonance varieties $\R^i_r (X)$. When the arrangement $\bA$ is central, essential and indecomposable,
Budur found in \cite{Bu} a particularly nice description of the defining ideals $\overline{\cI}^i_r$ from
Corollary  \ref{lem:redjump} \eqref{rj2}, in terms of Fitting ideals, and derived from this several interesting
consequences for $\R^i_r (X_{\bA})$.
\end{example}

Corollary \ref{lem:redjump} takes care of the natural reduced structure of relative jump loci. Now, we recall from \S\ref{ss52} that $X$ and $\A$
are related by the assumption $\Omega^{\bullet}(X, \K) \simeq_q \A^{\bullet}$. In particular, 
$\dim_{\K} H^i (X, \K)= \dim_{\K} H^i \A =: b_i$, for $i\le q$. Set $\ell := \dim_{\K} V$. We start our local analysis of 
jump loci with an easy remark on basepoints. Recall from \S\ref{ss53} the canonical change of rings maps
$P\to R \to \wR$ (respectively $\oP \to \oR \to \woR$), where $R$ (respectively $\oR$) is the local analytic ring of
$\cH (G, \BB)_{(1)}$ (respectively $\F (\A, \bb)_{(0)}$), and $\wR=\widehat{P}$ (respectively $\woR=\widehat{\oP}$) 
denotes completion.

\begin{remark}
\label{rem:basepts}
The equivalences below follow easily from Corollary \ref{lem:redjump}, for $i\le q$ and $r\ge 0$: 
$\wR \cI^i_r$ is a proper ideal of $\wR \eqv 1\in \V^i_r (X, \iota) \eqv \ell b_i \ge r \eqv 0\in \R^i_r (\A, \theta) \eqv \woR \, \overline{\cI}^i_r$
is proper in $\woR$. Otherwise, clearly both analytic germs  $\V^i_r (X, \iota)_{(1)}$ and $\R^i_r (\A, \theta)_{(0)}$ are empty.
\end{remark}

\begin{example}
\label{ex:mnev}
The fact that the jump loci, up to degree $q$, of a cochain complex with finitely generated free components, in degrees $i\le q$,
are Zariski closed does {\em not} follow from general principles. On the contrary, the construction \eqref{eq:specjump} 
exhibits the following `universality' property. Let $Z\subseteq \K$ be an {\em arbitrary} subset. For any $q$, consider the 
$P$--cochain complex $C^{\bullet}$, concentrated in degrees $q$ and $q+1$, defined by $P \stackrel{d}{\longrightarrow} C$,
where $P=\K [t]$. Here $C$ is the quotient of the free $P$--module generated by $w$ and $\{ w_z \}_{z\in Z}$, modulo
the relations $\{ w= (t-z)w_z \}_{z\in Z}$, and the $P$--linear map $d$ sends $1$ to the class of $w$ in $C$. 
A straightforward computation shows that $\V^q_1 (C, \K)= Z$. 
\end{example}

\subsection{Preliminaries on completions}
\label{ss81}

To simplify notation, we fix $i\le q$ and $r\ge 0$ and set $\widehat{\cI}:= \wR \cdot \cI^i_r$,
$\widehat{\overline{\cI}}:= \woR \cdot \overline{\cI}^i_r$, cf. Corollary \ref{lem:redjump}. 
We view both $\widehat{\cI}$ and $\widehat{\overline{\cI}}$ as
ideals in $\wR$, via the canonical identification $\wR \stackrel{\sim}{\leftrightarrow} \woR$ used in \S\ref{ss72} and \S\ref{ss73}.

For an arbitrary prime ideal $\fp \subseteq \wR$, we set $S:=\wR/\fp$, we denote by $\psi: \wR \surj S$ the canonical projection, and by
$S\subseteq \bK$ the inclusion into the field of fractions.

\begin{lemma}
\label{lem:compl1}
We have the equivalence $\sqrt{\widehat{\cI}} \subseteq \fp \eqv \sqrt{\widehat{\overline{\cI}}} \subseteq \fp$.
\end{lemma}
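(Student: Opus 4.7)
The plan is to reduce the equivalence to the statement that the two cohomology jump conditions coincide after specialization along $\psi$, and then invoke Proposition \ref{prop:comp}. Since $\fp$ is prime, it equals its own radical, so $\sqrt{\widehat{\cI}}\subseteq\fp \iff \widehat{\cI}\subseteq\fp$, and likewise for $\widehat{\overline{\cI}}$; hence I only need the equivalence $\widehat{\cI}\subseteq\fp \iff \widehat{\overline{\cI}}\subseteq\fp$.

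Next, I would unravel what $\widehat{\cI}\subseteq\fp$ means at the level of specialization. By construction $\widehat{\cI}=\wR\cdot\cI^i_r$, so $\widehat{\cI}\subseteq\fp$ holds iff $\cI^i_r$ is sent to zero under the composition $P\to\wR\stackrel{\psi}{\to}S\hookrightarrow\bK$. Calling this composite $\rho\in\Hom_{\alg}(P,\bK)$, Corollary \ref{lem:redjump}\eqref{rj1} (combined with the defining property from Lemma \ref{lem:zarspec}) tells me $\rho(\cI^i_r)=0$ is equivalent to $\rho\in\V^i_r(C^{\bullet}(X,\iota),\bK)$, i.e.\ to $\dim_{\bK}H^i(C^{\bullet}(X,\iota)\otimes_P\bK)\ge r$. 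Under the canonical identification $\wR\stackrel{\sim}{\leftrightarrow}\woR$, the composition $\oP\to\woR\stackrel{\overline{\psi}}{\to}S\hookrightarrow\bK$ gives $\overline{\rho}\in\Hom_{\alg}(\oP,\bK)$ and, symmetrically, $\widehat{\overline{\cI}}\subseteq\fp$ is equivalent to $\dim_{\bK}H^i(C^{\bullet}(\A,\theta)\otimes_{\oP}\bK)\ge r$.

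The core of the argument is then to show that these two dimensions are equal. Proposition \ref{prop:comp} (applied to the surjection $\psi:\wR\surj S$, identified with $\overline{\psi}:\woR\surj S$) already supplies a graded $S$-module isomorphism $H^i(C^{\bullet}(X,\iota)\otimes_P S)\cong H^i(C^{\bullet}(\A,\theta)\otimes_{\oP}S)$ for $i\le q$. Since the inclusion $S\hookrightarrow\bK$ into the field of fractions is flat, tensoring each universal complex (over $S$) with $\bK$ commutes with taking cohomology, yielding
\[
H^i(C^{\bullet}(X,\iota)\otimes_P\bK)\cong H^i(C^{\bullet}(X,\iota)\otimes_P S)\otimes_S\bK,
\]
and analogously on the algebraic side. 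Consequently the $\bK$-dimensions of $H^i(C^{\bullet}(X,\iota)\otimes_P\bK)$ and $H^i(C^{\bullet}(\A,\theta)\otimes_{\oP}\bK)$ coincide, completing the chain $\widehat{\cI}\subseteq\fp \iff \rho\in\V^i_r(C^{\bullet}(X,\iota),\bK) \iff \overline{\rho}\in\V^i_r(C^{\bullet}(\A,\theta),\bK) \iff \widehat{\overline{\cI}}\subseteq\fp$.

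The main subtlety—and the only real point requiring care—is the bookkeeping needed to guarantee that the two maps $P\to\bK$ and $\oP\to\bK$ produced from $\fp$ really are the specializations to which Proposition \ref{prop:comp} applies; this uses the fact (recorded in Remark \ref{rem:compliso} and exploited in Lemma \ref{lem:phid}) that the identification $\wR\stackrel{\sim}{\leftrightarrow}\woR$ comes from the natural equivalence of functors of Artin rings, so the pro-Artinian towers $\{\psi_n\}$ and $\{\overline{\psi}_n\}$ are matched, and hence $P\to S$ and $\oP\to S$ are consistent with the hypotheses of Proposition \ref{prop:comp}. Everything else is formal.
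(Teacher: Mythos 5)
Your proposal is correct and follows essentially the same route as the paper: reduce $\sqrt{\widehat{\cI}}\subseteq\fp$ to the vanishing of $\cI^i_r$ under $P\to\bK$, translate this via Corollary \ref{lem:redjump} into the condition $\dim_{\bK}H^i(C^{\bullet}(X,\iota)\otimes_P\bK)\ge r$, and then transfer to the algebraic side using the $S$--isomorphism of Proposition \ref{prop:comp} together with flat base change along $S\hookrightarrow\bK$. The only difference is that you spell out the flatness step that the paper leaves implicit, which is a welcome clarification rather than a deviation.
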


\begin{proof}
Plainly, $\sqrt{\widehat{\cI}} \subseteq \fp$ if and only if $\psi (\widehat{\cI})=0$ in $\bK$, and similarly for $\widehat{\overline{\cI}}$.
By Corollary \ref{lem:redjump}, $\psi (\widehat{\cI})=0$ in $\bK$ if and only if $\dim_{\bK} H^i(C (X, \iota) \otimes_P \bK) \ge r$, and likewise
for $\widehat{\overline{\cI}}$.

We know from Proposition \ref{prop:comp} that $H^i(C (X, \iota) \otimes_P S)$ is $S$--isomorphic to $H^i(C (\A, \theta) \otimes_{\oP} S)$.
Therefore, $H^i(C (X, \iota) \otimes_P \bK)$ and $H^i(C (\A, \theta) \otimes_{\oP} \bK)$ are isomorphic, over $\bK$. Our claim follows.
\end{proof}

\begin{lemma}
\label{lem:compl2}
The ideals $\wR \cdot \sqrt{R\cdot \cI^i_r}$ and $\woR \cdot \sqrt{\oR\cdot \overline{\cI}^i_r}$ are identified via
$\wR \stackrel{\sim}{\leftrightarrow} \woR$, for all $i\le q$ and $r\ge 0$.
\end{lemma}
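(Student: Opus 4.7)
The plan is to combine Lemma \ref{lem:compl1} with the standard fact that the completion of an analytic local ring commutes with the formation of radicals of ideals. First, since the radical of any ideal in a commutative ring equals the intersection of the prime ideals containing it, the equivalence in Lemma \ref{lem:compl1}, applied as $\fp$ ranges over all primes of $\wR$, yields the equality of ideals $\sqrt{\widehat{\cI}} = \sqrt{\widehat{\overline{\cI}}}$ in $\wR \stackrel{\sim}{\leftrightarrow} \woR$, where $\widehat{\cI} = \wR \cdot \cI^i_r$ and $\widehat{\overline{\cI}} = \woR \cdot \overline{\cI}^i_r$.

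It therefore suffices to check the two separate identifications $\wR \cdot \sqrt{R \cdot \cI^i_r} = \sqrt{\widehat{\cI}}$ and $\woR \cdot \sqrt{\oR \cdot \overline{\cI}^i_r} = \sqrt{\widehat{\overline{\cI}}}$, since $\widehat{\cI} = \wR \cdot (R \cdot \cI^i_r)$ and $\widehat{\overline{\cI}} = \woR \cdot (\oR \cdot \overline{\cI}^i_r)$. Both have the same form, so fix $J := R \cdot \cI^i_r \subseteq R$ and aim to prove $\wR \cdot \sqrt{J} = \sqrt{\wR \cdot J}$. The inclusion $\wR \cdot \sqrt{J} \subseteq \sqrt{\wR \cdot J}$ is formal. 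For the reverse inclusion, I would invoke the fact that $R$ is a local analytic ring in the classical sense (germs of analytic functions at $1 \in \cH(G,\BB)$), and such rings are excellent; in particular the quotient $R/\sqrt{J}$ is reduced and its completion $\wR/\wR\cdot\sqrt{J}$ remains reduced. Equivalently, the completion map $R \to \wR$ is regular, which forces $\sqrt{\wR \cdot J} \subseteq \wR \cdot \sqrt{J}$. This is precisely the content of the results on analytic rings collected in Tougeron \cite{T}, applied here with $J$ in place of a general ideal; exactly the same argument works on the bar side with $\oR$ in place of $R$.

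Combining the three equalities
\[
\wR \cdot \sqrt{R \cdot \cI^i_r} \;=\; \sqrt{\widehat{\cI}} \;=\; \sqrt{\widehat{\overline{\cI}}} \;=\; \woR \cdot \sqrt{\oR \cdot \overline{\cI}^i_r}
\]
under the canonical identification $\wR \stackrel{\sim}{\leftrightarrow} \woR$ finishes the proof. The main (conceptual) obstacle is the commutation of radicals with completion, but for analytic local rings this reduces to the excellent property, and the remainder of the argument is a purely formal repackaging of Lemma \ref{lem:compl1} via the Nullstellensatz description of the radical of an ideal as the intersection of the prime ideals containing it.
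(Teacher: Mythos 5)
Your proposal is correct and follows essentially the same route as the paper: the paper also first deduces $\sqrt{\widehat{\cI}}=\sqrt{\widehat{\overline{\cI}}}$ from Lemma \ref{lem:compl1} (it runs over the radicals of the components of a primary decomposition rather than over all primes containing the ideal, which is an immaterial variant of your argument), and then invokes the fact, cited from Tougeron, that $\sqrt{\wR\cdot J}=\wR\cdot\sqrt{J}$ for an ideal $J$ of the analytic local ring $R$ — precisely the excellence/regularity property you spell out. No gaps.
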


\begin{proof}
With the notation from Lemma \ref{lem:compl1}, recall from Remark \ref{rem:basepts} that the ideals $\widehat{\cI}$ and
$\widehat{\overline{\cI}}$ are simultaneously proper. Assuming this, let $\widehat{\cI}= \cap_{\alpha =1}^n \fq_{\alpha}$
be a primary decomposition. We infer that $\sqrt{\widehat{\cI}}= \cap_{\alpha =1}^n \sqrt{\fq_{\alpha}}$. By virtue of Lemma
\ref{lem:compl1} for $\fp= \sqrt{\fq_{\alpha}}$, $\sqrt{\widehat{\overline{\cI}}} \subseteq \sqrt{{\widehat{\cI}}}$.
Repeating this argument for $\widehat{\overline{\cI}}$ gives us the identification 
$\sqrt{\wR\cdot \cI^i_r} \equiv \sqrt{\woR\cdot \overline{\cI}^i_r}$, for all $i\le q$ and $r\ge 0$.

On the other hand, $\sqrt{\wR\cdot \cI^i_r}= \wR \cdot \sqrt{R\cdot \cI^i_r}$, and similarly for $\overline{\cI}^i_r$
(see e.g. \cite[p.~37]{T}). This completes the proof.
\end{proof}

\subsection{Proof of Theorem \ref{thm:8main} \eqref{8m1}}
\label{ss82}

We are going to establish our result by making use of M.~Artin's theorem (see e.g. \cite[pp.~58-65]{T}), that allows the
approximation of formal series solutions of analytic equations by convergent solutions. We will need to find a (reduced)
analytic approximation, $e$, of the formal isomorphism $\wR \stackrel{\sim}{\leftrightarrow} \woR$, compatible with {\em all}
germs of jump loci in degrees up to $q$. This is the reason why we have to examine in detail certain steps of the proof given
in \cite{T}.

Set $G=\pi_1 (X)$, as usual. As soon as the local analytic isomorphism 
$e: \cH (G, \BB)_{(1)} \stackrel{\sim}{\longleftrightarrow} \F (\A, \bb)_{(0)}$ is found, we may obviously suppose $r>0$ in
our second claim (on jump loci), since $\V^i_0 (X, \iota)= \cH (G, \BB)$ and $\R^i_0 (\A, \theta)= \F (\A, \bb)$, for all $i$.
Due to Corollary \ref{lem:redjump}, we have to verify the second claim only for finitely many pairs $\alpha:= (i,r)$. At the same time,
we may replace $\V^i_r (X, \iota)$ by $V(\cI^i_r)$, and $\R^i_r (\A, \theta)$ by $V(\overline{\cI}^i_r)$. Moreover, we may
assume that both $\wR\cdot \cI^i_r$ and $\woR\cdot \overline{\cI}^i_r$ are proper ideals, cf. Remark \ref{rem:basepts}.

Denoting $\K \{ x_1,\dots, x_n \}$ by $\OO_n$ and $\K \{ \bar{x}_1,\dots, \bar{x}_{\bar{n}} \}$ by $\OO_{\bar{n}}$,
write $R=\OO_n/J_0$ and $\oR=\OO_{\bar{n}}/\overline{J}_0$, where the ideals $J_0$ and $\overline{J}_0$ are generated by
finitely many convergent series vanishing at $0$. For $\alpha= (i,r)$, consider the proper ideals $\sqrt{R\cdot \cI^i_r} \subseteq R$ 
and $\sqrt{\oR\cdot \overline{\cI}^i_r} \subseteq \oR$, written as  $\sqrt{R\cdot \cI^i_r}= J_{\alpha}/J_0$ and
$\sqrt{\oR\cdot \overline{\cI}^i_r}=  \overline{J}_{\alpha}/\overline{J}_0$, where the finitely generated ideals 
$J_{\alpha}\subseteq \OO_n$ and $\overline{J}_{\alpha} \subseteq \OO_{\bar{n}}$ are contained in the corresponding 
maximal ideals. 

The existence of $e_0 \in \Hom_{\loc}(\widehat{\OO}_{\bar{n}}, \widehat{\OO}_n)$, sending 
$\widehat{\overline{J}}_0$ into $\widehat{J}_0$ and
$\widehat{\overline{J}}_{\alpha}$ into
$\widehat{J}_{\alpha}$ for all $\alpha$, is an Artin--type problem that has a formal solution. We denote by $e_0$ a formal local
homomorphism that induces the canonical identification, 
$e_0:  \woR=\widehat{\OO}_{\bar{n}}/\widehat{\overline{J}}_0 \stackrel{\sim}{\rightarrow} \widehat{\OO}_n/\widehat{J}_0= \wR$,
used in \S\ref{ss72}. Lemma \ref{lem:compl2} implies that $e_0$ is indeed a formal solution of our problem, since 
$e_0 : \woR \stackrel{\sim}{\rightarrow} \wR$ actually identifies  $\widehat{\overline{J}}_{\alpha}$ with $\widehat{J}_{\alpha}$ 
for all $\alpha$. By Artin approximation, we may find $e \in \Hom_{\loc}(\OO_{\bar{n}}, \OO_n)$, whose derivative at the origin
equals the corresponding truncation of $e_0$, sending  $\overline{J}_0$ into $J_0$ and
$\overline{J}_{\alpha}$ into $J_{\alpha}$ for all $\alpha$.

In particular, we obtain a local homomorphism $e: \oR \to R$. To check that $e$ is an isomorphism, it is enough to show that
$\widehat{e}: \woR \to \wR$ is an isomorphism on $\m$--adic completions. By the Hopfian property of the Noetherian ring
$\wR \cong \woR$, it suffices to show that $\widehat{e}$ is onto. Since we are dealing with complete objects, this amounts to the 
surjectivity of the associated graded, $\gr_{\bullet}(\widehat{e})$. Since the associated graded rings are generated in degree
$\bullet =1$, we are left with checking that $\gr_{1}(\widehat{e})=\gr_1 (e)$ is onto. By construction, $\gr_1 (e)=\gr_1 (e_0)$
is an isomorphism, and we are done.

For each $\alpha= (i,r)$, set $I_{\alpha}:= \sqrt{R\cdot \cI^i_r} \subseteq R$ and 
$\overline{I}_{\alpha}:= \sqrt{\oR\cdot \overline{\cI}^i_r} \subseteq \oR$. We know that $e$ induces a local homomorphism between
analytic algebras, $\widetilde{e}: \oR/\overline{I}_{\alpha} \to R/I_{\alpha}$, and the completions, 
$\woR/\widehat{\overline{I}}_{\alpha}$ and  $\wR/\widehat{I}_{\alpha}$, are identified via $e_0$. By the same arguments as before,
$\widetilde{e}$ is an isomorphism if and only if $\gr_1(\widetilde{e})$ is onto. This in turn follows from the corresponding property of 
$\gr_1(e)$. So, $e: \oR \stackrel{\simeq}{\rightarrow} R$ identifies $\sqrt{\oR \cdot \overline{\cI}^i_r}$ with $\sqrt{R \cdot \cI^i_r}$,
for all $\alpha=(i,r)$. 

Consequently, $e: \oR_{\red} \stackrel{\simeq}{\rightarrow} R_{\red}$ gives a reduced local analytic isomorphism,
$$e: \cH (G, \BB)_{(1)} \stackrel{\simeq}{\longrightarrow} \F (\A, \bb)_{(0)}\, ,$$ 
inducing an isomorphism of analytic germs,
$\V^i_r (X, \iota)_{(1)} \stackrel{\simeq}{\longrightarrow} \R^i_r (\A, \theta)_{(0)}$, for all $\alpha =(i,r)$. The proof of
Theorem \ref{thm:8main}\eqref{8m1} is complete. \hfill $\square$

\subsection{Proof of Theorem \ref{thm:8main} \eqref{8m2}}
\label{ss83}

We recall from Example \ref{ex:abe} and \eqref{eq=abtr} that $\eexp_{\BB}$ may be identified with the global analytic map
$e: \F (\A, \bb) \to \cH (G, \BB)$ constructed via monodromy representations. Consider the induced local analytic morphism,
$e: \F (\A, \bb)_{(0)} \to \cH (G, \BB)_{(1)}$, corresponding to the local ring morphism $e:R \to \oR$. 

As pointed out in Example \ref{ex:abe}, $\widehat{e}:\wR \to \woR$ coincides with the canonical identification,
$\wR \stackrel{\sim}{\leftrightarrow} \woR$, used in \S\ref{ss72}. In particular, $e:R \stackrel{\sim}{\rightarrow} \oR$ is
an isomorphism, and $\widehat{e}$ identifies  $\wR \cdot \sqrt{R\cdot \cI^i_r}$ with $\woR \cdot \sqrt{\oR\cdot \overline{\cI}^i_r}$,
for all $i\le q$ and $r\ge 0$, according to Lemma \ref{lem:compl2}.

By standard commutative algebra (see e.g. \cite[Chapter I]{T}), $\wR$ is faithfully flat over $R$ and therefore $e$ identifies 
$\sqrt{R\cdot \cI^i_r}$ with $\sqrt{\oR\cdot \overline{\cI}^i_r}$, for all $i\le q$ and $r\ge 0$. As before, we infer that 
$\eexp_{\BB}: \F (\A, \bb)_{(0)} \stackrel{\simeq}{\longrightarrow} \cH (G, \BB)_{(1)}$ gives a reduced local analytic isomorphism,
inducing isomorphisms of analytic germs, $\R^i_r (\A, \theta)_{(0)} \stackrel{\simeq}{\longrightarrow} \V^i_r (X, \iota)_{(1)}$,
for all $i\le q$ and $r\ge 0$. \hfill $\square$

\subsection{Immediate corollaries}
\label{ss84}

\begin{corollary}
\label{cor:qp}
Let $X$ be a quasi--projective manifold. Then, for every compactification of $X$, the associated Gysin model $\A^{\bullet}$ described in
Example \ref{ex:qproj} has the property that the analytic germs $\cH (\pi_1 (X), \BB)_{(1)}$ and $\F (\A \otimes \K, \bb)_{(0)}$ are
isomorphic, by an isomorphism that identifies the analytic germs $\V^i_r (X, \iota)_{(1)}$ and $\R^i_r (\A \otimes \K, \theta)_{(0)}$, 
for all $i,r\ge 0$.
\end{corollary}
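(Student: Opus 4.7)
The plan is to derive Corollary \ref{cor:qp} as a direct application of Theorem \ref{thm:8main}\eqref{8m1} with $q=\infty$, after checking that the quasi--projective setting supplies all the required hypotheses. First I would verify the finiteness assumptions from Definition \ref{def:fin}: a quasi--projective manifold $X$ has the homotopy type of a finite CW complex and is therefore $\infty$--finite; likewise, Example \ref{ex:qproj} records that the Gysin model $\A^{\bullet}=\A^{\bullet}(\overline{X},D)$ associated with any chosen smooth compactification $X=\overline{X}\setminus D$ is an $\infty$--finite $\Q$--$\sCDGA$. The bidegree structure $\A^{0}=\bigoplus_{p+l=0,\,p,l\ge 0}\A^{p,l}$ forces $p=l=0$, so $\A^{0}=H^{0}(\overline{X},\Q)=\Q$, since $\overline{X}$ is connected; hence $\A$ is connected, and base change to $\K$ preserves both connectedness and $\infty$--finiteness.

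Next I would invoke the key input, Morgan's theorem, quoted in Example \ref{ex:qproj} as the $\infty$--equivalence $\Omega^{\bullet}_{\Q}(X)\simeq_{\infty}(\A^{\bullet},d)$ of $\sCDGA$'s defined over $\Q$. Tensoring with $\K$ and composing with the natural $\infty$--equivalence $\Omega^{\bullet}_{\K}(X)\hookrightarrow\Omega^{\bullet}(X,\K)$ recalled in \S\ref{ss24}, one obtains $\Omega^{\bullet}(X,\K)\simeq_{\infty}\A^{\bullet}\otimes\K$. All the hypotheses of Theorem \ref{thm:8main}\eqref{8m1} are now met for the pair $(X,\A\otimes\K)$ and the given rational representation $\iota:\BB\to\GL(V)$, with $q=\infty$.

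Applying that theorem directly yields, on the one hand, an isomorphism of reduced analytic germs
$$e:\F(\A\otimes\K,\bb)_{(0)}\stackrel{\simeq}{\longrightarrow}\cH(\pi_{1}(X),\BB)_{(1)},$$
and, on the other hand, local analytic isomorphisms
$$e:\R^{i}_{r}(\A\otimes\K,\theta)_{(0)}\stackrel{\simeq}{\longrightarrow}\V^{i}_{r}(X,\iota)_{(1)}$$
for all $i\ge 0$ and $r\ge 0$. Together these are exactly the two conclusions claimed by the corollary.

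Since the argument is essentially an unpacking of previously established results, there is no genuinely hard step to overcome: the substantive work lies upstream, in Morgan's construction of the Gysin model and in Theorem \ref{thm:8main} itself. The only points that deserve a line of verification are the connectedness of $\A$ (handled by the bidegree argument above) and the compatibility of the two $\infty$--equivalences used to produce $\Omega^{\bullet}(X,\K)\simeq_{\infty}\A^{\bullet}\otimes\K$; both are straightforward. Note that the conclusion is independent of the compactification chosen, as Theorem \ref{thm:8main}\eqref{8m1} depends only on the $\infty$--type of $\A\otimes\K$, which by Morgan's result is an invariant of $X$.
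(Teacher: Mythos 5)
Your proposal is correct and follows exactly the route the paper intends: the corollary is stated in \S\ref{ss84} as an immediate consequence of Theorem \ref{thm:8main}\eqref{8m1} with $q=\infty$, with the needed inputs (Morgan's $\infty$--equivalence $\Omega^{\bullet}_{\Q}(X)\simeq_{\infty}\A^{\bullet}$ and the $\infty$--finiteness of both $X$ and $\A$) recorded in Example \ref{ex:qproj}. Your extra verifications of connectedness of $\A$ via the bidegree and of the compatibility of the two $\infty$--equivalences are correct and merely make explicit what the paper leaves implicit.
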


The next corollary of Theorem \ref{thm:8main} follows directly from the definition \ref{def:qtype} of partial formality.

\begin{corollary}
\label{cor:qformal}
Let $X$ be a connected CW--complex with finite $q$--skeleton ($1\le q\le \infty$), up to homotopy. If $X$ is $q$--formal over $\K$,
then the analytic germs $\cH (\pi_1 (X), \BB)_{(1)}$ and $\F ((H^{\bullet} (X, \K), d=0), \bb)_{(0)}$ are isomorphic, in such a way that
the analytic germs $\V^i_r (X, \iota)_{(1)}$ and $\R^i_r ((H^{\bullet} (X, \K), d=0), \theta)_{(0)}$ are identified, for all $i\le q$ and $r\ge 0$. 
\end{corollary}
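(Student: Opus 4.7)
The plan is simply to verify that the hypotheses of Theorem \ref{thm:8main} are satisfied once we take $\A^{\bullet}=(H^{\bullet}(X,\K),d=0)$, and then read off the conclusion. Since $X$ is path--connected with finite $q$--skeleton, one has $H^0(X,\K)=\K\cdot 1$ and $\dim_{\K}H^{\le q}(X,\K)<\infty$, so the CDGA $\A^{\bullet}$ is connected and $q$--finite in the sense of Definition \ref{def:fin}. The hypothesis that $X$ is $q$--formal (over $\K$) is exactly the statement $\Omega^{\bullet}(X,\K)\simeq_q \A^{\bullet}$, cf. \S \ref{ss24} and Definition \ref{def:qtype}.

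Given this, I would invoke Theorem \ref{thm:8main} \eqref{8m1} directly: the local analytic isomorphism
\[
e:\F(\A,\bb)_{(0)}\isom \cH(\pi_1(X),\BB)_{(1)}
\]
restricts to isomorphisms of germs $\R^i_r(\A,\theta)_{(0)}\isom \V^i_r(X,\iota)_{(1)}$ for all $i\le q$ and $r\ge 0$. Since there is nothing further to prove beyond plugging in $\A^{\bullet}=(H^{\bullet}(X,\K),d=0)$, the argument is essentially instant.

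There is no real obstacle here; the only mild observation worth recording is that, in this specific setup, $\F(\A,\bb)$ admits a very concrete description: because $d=0$ on $\A$, the Maurer--Cartan equation \eqref{eq=mc} reduces to $\tfrac{1}{2}[\omega,\omega]=0$, so $\F(\A,\bb)$ is the affine variety of those elements $\omega\in H^1(X,\K)\otimes\bb$ whose self--bracket vanishes. In particular, when $\BB$ is abelian, $\F(\A,\bb)$ is the whole vector space $H^1(X,\K)\otimes\bb$, and Theorem \ref{thm:8main} \eqref{8m2} further identifies $e$ with $\eexp_{\BB}$; this recovers and extends \cite[Theorem A]{DPS}, which covered the case $q=1$.
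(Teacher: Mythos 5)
Your proposal is correct and matches the paper's own (one-line) proof: the corollary is obtained by taking $\A^{\bullet}=(H^{\bullet}(X,\K),d=0)$, noting that $q$--formality gives $\Omega^{\bullet}(X,\K)\simeq_q\A^{\bullet}$ and that the finiteness hypotheses of Theorem \ref{thm:8main} are satisfied, and then quoting Theorem \ref{thm:8main}\eqref{8m1}. The extra remarks about the explicit shape of $\F(\A,\bb)$ are consistent with the paper's discussion in \S\ref{ss84} but are not needed for the proof.
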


When $\A^{\bullet}=(H^{\bullet} (X, \K), d=0)$, the definition of the relative resonance varieties $\R^i_r (\A , \theta)$ from 
Definition \ref{def:resv} coincides with the notion used in \cite{DPS}, where the $1$--formal case (over $\C$) was analyzed,
in degree $i=1$. This remark shows that Corollary \ref{cor:qformal} extends Theorem A (one of  the main results from \cite{DPS})
to arbitrary degree $q$ of partial formality, and in arbitrary degree $i\le q$; see also the alternative definition of $1$--formality
recorded at the end of \S\ref{ss54}. 

A simple particular case of representation varieties and cohomology jump loci that was extensively studied in the literature
is the {\em rank one case}, where $\BB= \GL_1 (\C)$ and $\iota =\id_{\C^{\times}}$. In this situation, $\cH (G, \BB)$ is
usually denoted $\T (G)$, and has the particularly simple structure of an affine torus. Nevertheless, the characteristic varieties
$\V^i_r (X, \iota)$ (denoted simply by $\V^i_r (X)$) turn out to be pretty complicated objects, in general. In this case, it is easily checked 
in Definition \ref{def:resv} that $\R^i_r (\A , \theta):=\R^i_r (\A)$ coincides with
\[
\big \{ \omega \in Z^1 \A \, \mid\, \dim_{\K}H^i (\A^{\bullet},  d+ \omega \cdot) \ge r \big \} \, ,
\]
where $\omega \cdot$ denotes left--multiplication by $\omega$ in $\A^{\bullet}$. When $\A^{\bullet}= (H^{\bullet} (X, \C), d=0)$,
$\F (\A, \bb)$ becomes the affine space $H^{1} (X, \C)$, $\R^i_r (\A , \theta):=\R^i_r (X)$ are the usual resonance varieties, intensively studied 
in arrangement theory, and $\eexp_{\BB}: H^{1} (G, \C) \to \T (G)$ is the usual exponential, as explained in Example \ref{ex:abexp}.

When $X=X_{\bA}$ is a complex hyperplane arrangement complement, we may view $X$ either as a quasi--projective manifold, or 
as a $\Q$--formal space, cf. Example \ref{ex:arr}. By Corollary \ref{cor:qformal} (for $q=\infty$), all analytic germs at $1$ of
(non--abelian) characteristic varieties of $X$ may be computed using $\A^{\bullet}= (H^{\bullet} (X, \K), d=0)$, which seems a better choice than 
the more complicated Gysin models. In particular, the analytic germs at $1$ of relative characteristic varieties are combinatorially determined.

\begin{corollary}
\label{cor:solvgerms}
Let $X=\bS/G$ be a solvmanifold, where the Lie algebra $\fs$ of $\bS$ satisfies the condition from Example \ref{ex:solv}.
Then the analytic germs $\cH (G, \BB)_{(1)}$ and $\F (\CC^{\bullet} (\fs \otimes \K), \bb)_{(0)}$
are isomorphic,  and the isomorphism identifies the analytic germ of  $\V^i_r (X, \iota)$ at $1$ with that of
$\R^i_r (\CC^{\bullet} (\fs \otimes \K), \theta)$ at $0$, for all $i,r \ge 0$.
\end{corollary}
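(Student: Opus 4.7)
The plan is to deduce Corollary \ref{cor:solvgerms} directly from Theorem \ref{thm:8main}\eqref{8m1}, applied with $q=\infty$ and $\A^{\bullet} = \CC^{\bullet}(\fs \otimes \K)$. The work is entirely in checking that the three hypotheses of Theorem \ref{thm:8main} are satisfied in this setting; there is no new calculation to perform.

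First, I would verify the finiteness conditions of Definition \ref{def:fin} for $q=\infty$. Since $G \subseteq \bS$ is a co-compact discrete subgroup of a $1$-connected solvable Lie group, $X = \bS/G$ is a closed $C^{\infty}$ manifold, hence has the homotopy type of a finite, connected CW complex; in particular it is $\infty$-finite. On the algebraic side, $\fs$ is finite-dimensional over $\RR$, so $\fs \otimes \K$ is finite-dimensional and $\CC^{\bullet}(\fs \otimes \K) = \bigwedge^{\bullet}(\fs \otimes \K)^*$ is a connected $\K$-$\sCDGA$ with $\dim_{\K} \CC^{\bullet}(\fs \otimes \K) < \infty$, so it is $\infty$-finite in the sense of Definition \ref{def:fin}.

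Second, I would invoke Example \ref{ex:solv}, which is the only nontrivial ingredient. Under the hypothesis that $\ad_x$ has only real eigenvalues for every $x \in \fs$, Hattori's theorem \cite{Hat} yields that the inclusion of left-invariant forms
\[
f: \CC^{\bullet}(\fs \otimes \K) = \Omega_{DR}^{\bullet}(\bS, \K)^{\bS} \hookrightarrow \Omega_{DR}^{\bullet}(\bS, \K)^G = \Omega_{DR}^{\bullet}(X, \K)
\]
is an $\infty$-equivalence of $\sCDGA$'s. Combined with the natural $\infty$-equivalence $\Omega_{DR}^{\bullet}(X, \K) \simeq_{\infty} \Omega^{\bullet}(X, \K)$ from Example \ref{ex:mfds} (which identifies global $C^{\infty}$ forms with simplicial $C^{\infty}$ forms), this provides a zigzag of $\infty$-equivalences
\[
\Omega^{\bullet}(X, \K) \simeq_{\infty} \Omega_{DR}^{\bullet}(X, \K) \stackrel{f}{\longleftarrow} \CC^{\bullet}(\fs \otimes \K),
\]
so $\Omega^{\bullet}(X, \K) \simeq_{\infty} \CC^{\bullet}(\fs \otimes \K)$.

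Third, with both finiteness hypotheses and the weak equivalence of $\sCDGA$'s verified, Theorem \ref{thm:8main}\eqref{8m1} (taking $q = \infty$) applies immediately. It produces an isomorphism of reduced analytic germs $e: \F(\CC^{\bullet}(\fs \otimes \K), \bb)_{(0)} \stackrel{\simeq}{\longrightarrow} \cH(\pi_1(X), \BB)_{(1)} = \cH(G, \BB)_{(1)}$ which restricts, for every $i \ge 0$ and $r \ge 0$, to a local analytic isomorphism $\R^i_r(\CC^{\bullet}(\fs \otimes \K), \theta)_{(0)} \stackrel{\simeq}{\longrightarrow} \V^i_r(X, \iota)_{(1)}$. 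Since there is genuinely no obstacle here beyond recognizing that all hypotheses of the main theorem are met, the only possible pitfall would be a miscitation of Hattori's $\infty$-equivalence or forgetting that $X$ is a $K(G,1)$ (as a compact aspherical manifold with solvable fundamental group), but neither issue arises; the corollary is essentially a repackaging.
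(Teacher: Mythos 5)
Your proposal is correct and matches the paper's treatment: Corollary \ref{cor:solvgerms} is presented there as an immediate consequence of Theorem \ref{thm:8main}\eqref{8m1} with $q=\infty$ and $\A^{\bullet}=\CC^{\bullet}(\fs\otimes\K)$, the required $\infty$--equivalence $\Omega^{\bullet}(X,\K)\simeq_{\infty}\CC^{\bullet}(\fs\otimes\K)$ and the $\infty$--finiteness of both $X$ and $\CC^{\bullet}(\fs\otimes\K)$ having already been recorded in Example \ref{ex:solv}. The only inessential remark is your aside about $X$ being a $K(G,1)$: the theorem only needs $\pi_1(X)=G$, which holds since $\bS$ is $1$--connected and $G$ acts freely and properly discontinuously.
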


\begin{corollary}
\label{cor:nilgerms}
Let $G$ be a finitely generated nilpotent group, with Malcev $\Q$--Lie algebra $E$ \cite{Q} and $\Q$--minimal model 
$\M =\CC^{\bullet} (E)$ \cite{S}. Then the analytic germs $\cH (G, \BB)_{(1)}$ and $\F (\M\otimes \K, \bb)_{(0)}$
are isomorphic by an isomorphism identifying the germs $\V^i_r (K(G, 1), \iota)_{(1)}$ and $\R^i_r (\M\otimes \K, \theta)_{(0)}$,
for all $i,r \ge 0$.
\end{corollary}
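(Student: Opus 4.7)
The plan is to obtain this as a direct application of Theorem \ref{thm:8main} with $q = \infty$, $X = K(G, 1)$, and $\A^{\bullet} = \M \otimes \K = \CC^{\bullet}(E \otimes \K)$; what must be verified is that this pair meets the hypotheses of the theorem, namely that both $X$ and $\A$ are $\infty$-finite and that $\Omega^{\bullet}(X, \K) \simeq_{\infty} \A^{\bullet}$. The $\infty$-finiteness of $\A$ is immediate: since $E$ is a finite-dimensional nilpotent $\Q$-Lie algebra, the Chevalley--Eilenberg complex has $\A^i = \bigwedge^i (E \otimes \K)^*$ finite-dimensional in every degree, and $\A^0 = \K$. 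The $\infty$-finiteness of $K(G, 1)$ follows from the classical fact that every finitely generated nilpotent group is polycyclic, hence of type $F_{\infty}$; when $G$ is torsion-free one has the explicit model as a compact nilmanifold $\bS/G$, as in Example \ref{ex:nilm}.

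The central step is the $\sCDGA$ $\infty$-equivalence $\Omega^{\bullet}(K(G,1), \K) \simeq_{\infty} \A^{\bullet}$. When $G$ is torsion-free this is exactly Example \ref{ex:nilm}. For the general case, I would exploit the (finite) torsion subgroup $T \subseteq G$, set $G' := G/T$, and observe that $G'$ is torsion-free nilpotent with the same Malcev $\Q$-Lie algebra $E$ as $G$, so that Example \ref{ex:nilm} yields $\Omega^{\bullet}(K(G', 1), \K) \simeq_{\infty} \A^{\bullet}$. The quotient $G \to G'$ induces a map $K(G, 1) \to K(G', 1)$ whose homotopy fiber is $K(T, 1)$, rationally acyclic because $T$ is finite and $\K$ has characteristic zero; a Serre spectral sequence argument on Sullivan forms then produces $\Omega^{\bullet}(K(G', 1), \K) \simeq_{\infty} \Omega^{\bullet}(K(G, 1), \K)$, completing the chain.

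Once these hypotheses are in place, Theorem \ref{thm:8main} \eqref{8m1} directly furnishes the isomorphism $\cH(G, \BB)_{(1)} \cong \F(\M \otimes \K, \bb)_{(0)}$ identifying all germs $\V^i_r(K(G, 1), \iota)_{(1)}$ with $\R^i_r(\M \otimes \K, \theta)_{(0)}$. The main obstacle is the torsion reduction: one must verify that the pullback map on Sullivan forms associated to a fibration with rationally acyclic fiber is an $\infty$-equivalence in $\sCDGA$, not merely a cohomology isomorphism. This is a standard manoeuvre in rational homotopy theory, but the required spectral sequence comparison is the only nontrivial step beyond directly invoking Example \ref{ex:nilm} and Theorem \ref{thm:8main}.
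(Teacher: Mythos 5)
There is a genuine gap at the very first step: for a finitely generated nilpotent group $G$ with nontrivial torsion, $K(G,1)$ is \emph{not} $\infty$--finite in the sense of Definition \ref{def:fin}. Being of type $F_\infty$ (finite $q$--skeleton for every finite $q$) is strictly weaker than having the homotopy type of a complex with finite $\infty$--skeleton, i.e.\ of a finite CW complex; a group with torsion has infinite cohomological dimension over $\Z$, so $K(G,1)$ is not even homotopy equivalent to a finite--dimensional complex. Hence Theorem \ref{thm:8main} cannot be invoked with $q=\infty$ and $X=K(G,1)$, and your appeal to ``polycyclic, hence of type $F_\infty$'' does not deliver the hypothesis you need. (By contrast, the step you flag as the only nontrivial one is harmless: an $\infty$--equivalence is by definition a $\sCDGA$ map inducing an isomorphism on cohomology, and $p^*\colon\Omega^{\bullet}_{\K}(K(G/T,1))\to\Omega^{\bullet}_{\K}(K(G,1))$ is such a map simply because $H^{+}(T,\K)=0$ in characteristic zero.)

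The paper therefore takes a different route: it applies Theorem \ref{thm:8main} only to the torsion--free quotient $G_0=G/T$, whose classifying space is a compact nilmanifold and genuinely $\infty$--finite (Example \ref{ex:nilm}), and then transfers the conclusion along $p\colon G\surj G_0$. This transfer is the real content missing from your outline: one must show (a) that $p^*\colon\cH(G_0,\BB)_{(1)}\to\cH(G,\BB)_{(1)}$ is a local analytic isomorphism, which the paper checks on functors of Artin rings via Corollary \ref{cor:kunip} and the isomorphism $\Mal(G)\cong\Mal(G_0)$, and (b) that $p^*\colon H^{\bullet}(G_0,{}_{\rho_0}V)\to H^{\bullet}(G,{}_{\rho_0\circ p}V)$ is an isomorphism for \emph{every} representation $\rho_0$ of $G_0$, via the Hochschild--Serre spectral sequence and $H^{+}(T,V)=0$, so that the characteristic varieties themselves match up. Your approach could be repaired without (a) and (b) by applying Theorem \ref{thm:8main} for each finite $q$ (for which $K(G,1)$ \emph{is} $q$--finite), taking $q=\dim E$ to obtain the ambient germ isomorphism, and observing that for $i>\dim E$ and $r\ge 1$ both $\R^i_r(\M\otimes\K,\theta)$ and, by the theorem at level $q=i$, the germ $\V^i_r(K(G,1),\iota)_{(1)}$ are empty; but as written the proposal rests on a false finiteness claim.
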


\begin{proof}
We are going to reduce the proof to the torsion--free case, examined in Example \ref{ex:nilm}. Consider the exact sequence 
$1\to T \to G \stackrel{p}{\longrightarrow} G_0 \to 1$, where $T$ is the (finite) torsion subgroup of $G$ (see \cite{HMR}), and
$G_0$ is finitely generated nilpotent and torsion--free. Since $T$ is finite, $p$ induces an isomorphism between Malcev $\Q$--Lie algebras, 
hence $\M$ is also the $\Q$--minimal model of $G_0$ (cf. \cite{HMR, Q}). By Theorem \ref{thm:8main}, our statement 
holds when $G$ is replaced by $G_0$. 

Next, we claim that $p$ induces a reduced local analytic isomorphism, 
$p^*: \cH (G_0, \BB)_{(1)} \stackrel{\simeq}{\rightarrow} \cH (G, \BB)_{(1)}$. This may be checked at the level of functors of Artin rings,
by showing that $p^*: \Hom_{\gr} (G_0, \eexp (\bb \otimes \m_A)) \to  \Hom_{\gr} (G, \eexp (\bb \otimes \m_A))$ 
is a bijection, for any $A\in \Ob (\sArt)$. We invoke Corollary \ref{cor:kunip} to see that the last claim is equivalent to 
the bijectivity of $p^*: \Hom_{\alggr} (\eexp (\Mal (G_0)), \eexp (\bb \otimes \m_A)) \to  \Hom_{\alggr} (\eexp (\Mal (G)), \eexp (\bb \otimes \m_A))$;
see also the discussion at the end of \S\ref{ss54}. This in turn is clear, since $p: \Mal (G)  \stackrel{\simeq}{\rightarrow}  \Mal (G_0)$
is an isomorphism.

Finally, we will verify that $p^*: H^{\bullet} (G_0, {}_{\rho_0} V) \to H^{\bullet} (G, {}_{\rho_0 \circ p} V)$ is an isomorphism, for any
$\rho_0 \in \cH (G_0, \GL (V))$. This will clearly imply that $p^*: \cH (G_0, \BB)_{(1)} \stackrel{\simeq}{\rightarrow} \cH (G, \BB)_{(1)}$
identifies the germs $\V^i_r (K(G_0, 1), \iota)_{(1)}$ and $\V^i_r (K(G, 1), \iota)_{(1)}$, for all $i,r\ge 0$, thus finishing the proof of the corollary.

To check our final claim, we consider the Hochschild--Serre spectral sequence of the extension
$1\to T \to G \stackrel{p}{\longrightarrow} G_0 \to 1$ (see \cite{Br}),
\[
E_2^{s,t}= H^s(G_0, H^t(T,V)) \Longrightarrow H^{s+t}(G, {}_{\rho_0 \circ p} V) \, .
\]
In characteristic $0$, $H^+(T, V)=0$, since $T$ is finite. It follows that the spectral sequence degenerates to the isomorphisms
$p^s: H^{s} (G_0, {}_{\rho_0} V) \stackrel{\simeq}{\longrightarrow} H^{s} (G, {}_{\rho_0 \circ p} V)$, $\forall s$, as asserted. 
\end{proof}

Corollaries \ref{cor:solvgerms} and \ref{cor:nilgerms} offer a convenient description of germs at $1$ of arbitrary non--abelian 
jump loci, for two important classes of poly--cyclic groups.

\subsection{Weighted tangent cone}
\label{ss85}

A weight decomposition of a $\sCDGA$ $\A^{\bullet}$ gives rise to a $1$--parameter group homomorphism,
$\C^{\times} \rightarrow \Aut_{\sCDGA} (\A \otimes \C)$, denoted $t \cdot a$ for $t\in \C^{\times}$ and 
$a\in \A^{\bullet} \otimes \C$, defined by $t \cdot a= t^j a$, for $a\in \A^{\bullet}_j \otimes \C$. When
$\dim_{\Q} \A^1 <\infty$, the positivity of the weights in degree $i=1$ implies that the $\C^{\times}$--action 
extends to an algebraic map, $(\C, 0)\to (\gl (\A^{1} \otimes \C), 0)$.

We start the proof of Theorem \ref{thm:tori} with a simple, useful remark. Let $\A^{\bullet}$ be a $1$--finite 
$\C$--$\sCDGA$, endowed with a weight decomposition. Given a finite--dimensional Lie representation, 
$\theta: \bb \to \gl (V)$, consider the $1$--parameter group homomorphism, 
$\C^{\times} \to \Aut_{\sDGL}(\A^{\bullet} \otimes (V \rtimes_{\theta} \bb))$,  
where $t\in \C^{\times}$ acts by $(t \cdot)  \otimes (V \rtimes_{\theta} \bb)$. By direct inspection of Definition \ref{def:resv}, 
we infer that the $\C^{\times}$--action on $\A^1 \otimes \bb$ leaves both $\F( \A, \bb)$ and $\R^i_r (\A, \theta)$ ($i,r\ge 0$)
invariant.

Next, the identification $H^1(X, \C) \stackrel{\sim}{\leftrightarrow} H^1 \A$ may be used to endow $H^1 (G, \C)$ with a
positive weight $\C^{\times}$--action, where $G=\pi_1 (X)$. 

\begin{definition}
\label{def:wtgcone}
The {\em weighted exponential tangent cone} of a Zariski closed subset, $\V \subseteq \T (G)$, is
\[
WETC_1 (\V) := \big \{ z\in H^1 (G, \C) \, \mid \, \eexp (t\cdot z) \in \V \, , \forall t\in \C \big \} \, ,
\]
where $\eexp: H^1 (G, \C) \to \T (G)$ is the usual exponential.
\end{definition}

Set $n=b_1(G)$ and recall that the identification $H^1(G,\C) \stackrel{\sim}{\leftrightarrow} H^1 \A$ preserves
$\Q$--structures. This implies that there exist $(a_{ij})= M\in \GL_n (\Q)$ and $\{ w_i \in \N_+ \}_{1\le i \le n}$
such that, for any $t\in \C$ and $z\in H^1(G,\C) \equiv \C^n$, $t\cdot z= M D(t) M^{-1}(z)$, where $D(t)$ is the
diagonal matrix with entries $\{ t^{w_i} \}_{1\le i \le n}$. The next result extends Lemma 4.3 from \cite{DPS}.

\begin{lemma}
\label{lem:ratl}
For any Zariski closed subset $\V \subseteq \T (G)$, $WETC_1 (\V)$ is a finite union of rational linear 
subspaces of $H^1(G,\C)$. 
\end{lemma}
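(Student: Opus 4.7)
\emph{Proof plan.} My strategy is to adapt the classical analysis of exponential tangent cones to the weighted $\C^{\times}$-action by replacing linear exponents $t\langle\alpha,z\rangle$ with polynomials in $t$, and then to invoke $\C$-linear independence of exponentials of pairwise distinct polynomials.

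First I would fix coordinates. Using the $\Q$-structure on $H^1(G,\C)$ and the rational weight decomposition, pick a $\Q$-basis $v_1,\dots,v_n$ of $H^1(G,\Q)$ adapted to the weight splitting (so $v_j$ has weight $w_j\in\N_+$), and a $\Z$-basis $g_1,\dots,g_n$ of the free part of $G_{\ab}$ with dual $\Q$-basis $\eta_1,\dots,\eta_n$ of $H^1(G,\Q)$; this identifies $\T(G)^0\cong(\C^{\times})^n$ via $\chi\mapsto(\chi(g_1),\dots,\chi(g_n))$. Let $M\in\GL_n(\Q)$ be the change-of-basis matrix $v_j=\sum_i M_{ij}\eta_i$, and write $z=\sum_j u_j v_j\in H^1(G,\C)$, so that $t\cdot z=\sum_j t^{w_j}u_j v_j$. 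Since all $w_j\ge 1$, the curve $t\mapsto\eexp(t\cdot z)$ passes through the identity at $t=0$ and hence lies entirely in $\T(G)^0$, so I may replace $\V$ by $\V\cap\T(G)^0$ and cut it out by finitely many Laurent polynomials $f_1,\dots,f_m\in\C[x_1^{\pm1},\dots,x_n^{\pm1}]$.

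For each $f_k=\sum_{\alpha\in S_k}c_{k,\alpha}x^{\alpha}$ with finite $S_k\subset\Z^n$, I would then compute
\[
f_k\bigl(\eexp(t\cdot z)\bigr)\;=\;\sum_{\alpha\in S_k}c_{k,\alpha}\,e^{P_{k,\alpha}(t)},\qquad P_{k,\alpha}(t)=\sum_{j}(M^T\alpha)_j\,u_j\,t^{w_j}\in\C[t],
\]
observing that $P_{k,\alpha}(0)=0$ and that the coefficients of $P_{k,\alpha}$ depend $\Z$-linearly on $\alpha$ through the fixed $\Q$-matrix $M^T$. The analytic heart of the argument is the classical fact that for polynomials $Q_1,\dots,Q_N\in\C[t]$ pairwise distinct modulo constants, the entire functions $e^{Q_1},\dots,e^{Q_N}$ are $\C$-linearly independent; I plan to prove this by choosing a ray $\{re^{i\theta}:r>0\}$ along which the real parts $\mathrm{Re}\,Q_i(re^{i\theta})$ are pairwise distinct polynomials in $r$, so that the fastest-growing term must have coefficient $0$, and iterating.

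Applying this independence to $f_k\bigl(\eexp(t\cdot z)\bigr)\equiv 0$ forces, for each $z\in WETC_1(\V)$, a set-partition $\pi$ of $S_k$ satisfying (a) $P_{k,\alpha}=P_{k,\alpha'}$ whenever $\alpha,\alpha'$ lie in the same block, and (b) $\sum_{\alpha\in B}c_{k,\alpha}=0$ on every block $B\in\pi$. Grouping $P_{k,\alpha}-P_{k,\alpha'}$ by powers of $t$, condition (a) reads
\[
\sum_{j:\,w_j=w}\bigl(M^T(\alpha-\alpha')\bigr)_j\,u_j\;=\;0\qquad\text{for every weight }w,
\]
a $\Q$-linear system in $u$ with rational coefficients (from $\alpha-\alpha'\in\Z^n$ and $M\in\GL_n(\Q)$). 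Call $\pi$ \emph{admissible} for $f_k$ if (b) holds, a condition on the coefficients alone and independent of $z$; to each admissible $\pi$ I associate the rational linear subspace $L_{k,\pi}\subseteq H^1(G,\C)$ cut out by (a). Since $S_k$ is finite, so is the set of admissible partitions, and $WETC_1(f_k^{-1}(0))=\bigcup_{\pi}L_{k,\pi}$; finally $WETC_1(\V)=\bigcap_{k=1}^{m}WETC_1(f_k^{-1}(0))$ is, after distributing intersection over finite unions, a finite union of rational linear subspaces of $H^1(G,\C)$. The main obstacle I foresee is the polynomial-exponent linear independence step: the purely linear case ($w_1=\dots=w_n=1$) used in \cite[Lemma 4.3]{DPS} does not cover it, but the growth-rate argument above (or, alternatively, an induction on $(\max_i\deg Q_i,N)$ by repeated differentiation) handles it, after which the remainder is formal combinatorics and $\Q$-rationality is automatic.
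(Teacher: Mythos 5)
Your proof follows essentially the same route as the paper's: reduce to a single Laurent polynomial, expand $f(\eexp(t\cdot z))$ as a finite sum $\sum_{\alpha}c_{\alpha}e^{P_{\alpha}(t)}$ with the exponents polynomials in $t$ vanishing at $0$ and depending $\Q$-linearly on $z$, and parametrize the solution set by partitions of the support whose blocks have vanishing coefficient sums, each partition contributing the rational linear subspace where the corresponding $P_{\alpha}$ coincide. The only difference is that the paper invokes E.~Borel's classical theorem on linear independence of exponentials of entire functions, whereas you prove the (easier, polynomial-exponent) independence directly by a growth argument; both are fine.
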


\begin{proof}
Since clearly $WETC_1 (\V)$ depends only on the analytic germ at $1$ of $\V$, it is enough to prove our claim for
$\V= V(f)$, where $f\in \C [t_1^{\pm 1}, \dots, t_n^{\pm 1}]$ is a non--zero Laurent polynomial with $f(1)=0$.
Moreover, we may replace $WETC_1 (V(f))$ by
\begin{equation}
\label{eq=ef}
\big \{ z\in \C^n \, \mid \, f(\eexp (M D(t)(z)))= 0\, , \forall t\in \C \big \}\, .
\end{equation}

Write $f= \sum_{u\in S} c_u t_1^{u_1} \cdots t_n^{u_n}$, where the support $S\subseteq \Z^n$ is finite and non--empty,
and $c_u \in \C^{\times}$ for all $u\in S$. Fix $z\in \C^n$. For $u\in S$, define $f_u (t)\in \C[t]$ by
$f_u (t)= \sum_{1\le i,j\le n} a_{ij}u_i z_j t^{w_j}$, and note that $f_u(0)=0$. Let $\cP$ be the set of partitions,
$S= S_1 \coprod \dots \coprod S_k$, with the property that $\sum_{u\in S_i} c_u =0$, for $i=1, \dots, k$. 
For $p\in \cP$, define the rational linear subspace $T(p)$ of $\C^n$ by
\[
T(p)= \big \{ z\in \C^n \, \mid \, f_u(t)= f_v(t) \, \text{in}\,  \C [t]\, , \forall u,v\in S_i \, , \forall 1\le i \le k \big \} \, .
\]

We claim that the solutions of \eqref{eq=ef} coincide with $\bigcup_{p\in \cP} T(p)$. Indeed, \eqref{eq=ef} means that
$\sum_{u\in S} c_u \eexp (f_u(t))=0$, for all $t$. Denoting by $p$ the partition of $S$ corresponding to the distinct values,
$\{ f_1,\dots, f_k \}$, of $f_u (t)\in \C [t]$ for $u\in S$, this implies that 
$\sum_{1\le i\le k} (\sum_{u\in S_i} c_u) \eexp (f_i(t))\equiv 0$, and the polynomial $f_i(t)-f_j(t)$ is not constant, for
$1\le i\ne j\le k$. By an old result of E.~Borel \cite{EB}, on linear independence of exponentials of entire functions,
we infer that $p\in \cP$ and $z\in T(p)$. Conversely, if $z\in T(p)$ with $p\in \cP$, then clearly \eqref{eq=ef} holds.
The claim is verified, and the proof of our lemma is complete.
\end{proof}

\subsection{Proof of Theorem \ref{thm:tori}}
\label{ss86}

Recall the identifications $\F (\A, \C) =H^1 \A \stackrel{\sim}{\leftrightarrow} H^1(G, \C)$.
For a given $z\in H^1(G,\C)$ and $i\le q$, $r\ge 0$, $z\in WETC_1 (\V^i_r(X))$ if and only if $t\cdot z\in \R^i_r(\A)$
for $t\in \C$ near $0$, by virtue of Theorem \ref{thm:8main}\eqref{8m2}. By the remark preceding Definition \ref{def:wtgcone}, 
we infer that $\R^i_r(\A)=  WETC_1 (\V^i_r(X))$, and all irreducible components of $\R^i_r(\A)$ are $\C^{\times}$--invariant.
Therefore, Part \eqref{8t1} is a consequence of Lemma \ref{lem:ratl}.
Moreover, $\eexp (\R^i_r(\A))$ is a finite union of connected affine subtori of $\T (G)$. Again by Theorem \ref{thm:8main}, 
$\V^i_r(X)$ coincides with $\eexp (\R^i_r(\A))$, near $1\in \T(G)$. Part \eqref{8t2} follows. \hfill $\square$

\begin{example}
\label{ex:counterc}
The conclusion of Theorem \ref{thm:tori}\eqref{8t2} does not hold without strong assumptions on $X$. Here is a
simple example, extracted from \cite[Example 4.5]{PS1}, where $X$ is a connected finite $2$--dimensional CW complex
with $\T(G)= (\C^{\times})^2$, and $\V^1_1(X)$ has a single (non--translated) component, the curve $\{ t_1+t_2=2\}$,
which contains no positive--dimensional connected affine subtorus.

The same thing happens with  Theorem \ref{thm:tori}\eqref{8t1}. Indeed, let us consider the  $\sCDGA$ 
$\A^{\bullet}= \CC^{\bullet} (E\otimes \C)$, where $E$ is the $2$--dimensional $\Q$--Lie algebra without positive weights
from Example \ref{ex:nopos}. For $\theta=\id_{\C}$, an easy computation shows that 
$\R^1_1( \CC^{\bullet} (E\otimes \C), \theta) = \{ 0,1\} \subseteq \C= \F (\CC^{\bullet} (E\otimes \C), \C)$ is non--linear.

The non--linearity of $\R^1_1( X)$ was pointed out in \cite[Example 10.1]{DPS}, in the case when $X$ is the configuration space
of $n$ distinct points on an elliptic curve with $n\ge 3$. Note that for this family of quasi--projective manifolds linearity holds,
when $\A^{\bullet}= (H^{\bullet}(X, \C), d=0)$ is replaced by a Gysin model. By Theorem \ref{thm:tori}\eqref{8t2},
the germ at $\rho=1$ of $\V^1_1(X)$ is linear, yet $\R^1_1( X)$, the degree $1$ and depth $1$ jump locus of the
Wang \cite{Wa} Aomoto complex $\aom^{\bullet}(X, 1)$, is non--linear.
\end{example}

As explained in Examples \ref{ex:fposw} and \ref{ex:qproj}, Theorem \ref{thm:tori} applies in particular to $q$--formal spaces over $\Q$
and quasi--projective manifolds (where $q=\infty$). Note that the conclusions of Theorem \ref{thm:tori} are valid (for $q=1$)
for $X=K(G,1)$, where the group $G$ is finitely generated and $1$--formal over $\C$, and $\A^{\bullet}= (H^{\bullet} (G, \C), d=0)$.
Indeed, we may define another $1$--finite $\sCDGA$, $\B^{\bullet}$, with $d=0$ and positive weight equal to degree, by
setting $\B^{\le 1}=\A^{\le 1}$, $\B^{>2}=0$ and $\B^2= D\A^2$, the image of the cup--product map $\cup: \wedge^2 \A^1 \to \A^2$.
Clearly, $\F (\A,\C)=\F (\B, \C)$ and $\R^{\le 1}_r (\A)= \R^{\le 1}_r (\B)$, for all $r\ge 0$. Moreover, 
$\Omega^{\bullet} (G,\C) \simeq_1 (\B^{\bullet}, d=0)$, by $1$--formality, and we may achieve that the associated identification,
$H^1(G, \C) \stackrel{\sim}{\leftrightarrow} H^1(G, \C)$, is the identity. In this way, we may see that Theorem \ref{thm:tori}
extends one of the main results in \cite{DPS} (Theorem B from that paper), from $1$--formal groups to $q$--formal spaces
($1\le q\le \infty$).

\subsection{Proof of Corollary \ref{cor:subMHS}}
\label{ss86a}

First note that the mixed Hodge structure on $H^1(X,\Q)$ is split, in the sense that there is a natural direct sum decomposition 
$$H^1(X,\C)=H^{1,0}(X)\oplus H^{0,1}(X)\oplus H^{1,1}(X),$$
see for instance \cite{A} or \cite{Dcompo}. Because of this splitting, it follows that such a mixed Hodge structure can be described 
(as it is the case for pure Hodge structures) by a linear $\C^{\times}$--action on 
$H^1(X,\C)$, given by setting
$$z \cdot u=z^p\overline z ^qu \text {    for  } u \in H^{p,q}(X).$$
To show that any irreducible component $E$ of $\R^i_r (\A, \theta)$ (which we know already to be defined over $\Q$) is a sub--MHS 
in $H^1(X,\Q)$, it is enough to show that $E$ is stable with respect to the above $\C^{\times}$--action, 
exactly as in the proof of Lemma 2 from \S 3  in \cite{V}.

Coming back to the notation from Example \ref{ex:qproj}, let $(\A^{\bullet}, d)$ be the Gysin model of $X$ with respect to some smooth compactification.
Note that each of the summands $\A^{p,l}= \bigoplus_{\mid S \mid =l} H^p (\cap_{i\in S} D_i, \Q)(-l)$ is a pure Hodge structure of weight $p+2l$ 
and as such its complexification carries a natural $\C^{\times}$--action defined exactly as above.

The multiplication is defined by the cup--product which is compatible with the Hodge types. Hence we see that the condition
$\A^{p,l} \cdot \A^{p',l'} \subseteq \A^{p+p',l+l'}$ can be refined to the condition
$$ z\cdot (u \wedge v)= (z\cdot u) \wedge (z\cdot v),$$
for any $u,v \in \A^{\bullet}$ and any $z \in \C^{\times}$, which takes care of the Hodge types $(s,t)$ as well.

The differential, $d: \A^{p,l} \to \A^{p+2, l-1}$, being 
defined by using the various Gysin maps coming from intersections of divisors,  preserves the Hodge types 
(here the Tate twist is essential) and this can be restated as
$$d(z\cdot u)=z \cdot d(u),$$
for any $u \in \A^{\bullet}$ and any $z \in \C^{\times}$.

The above formulas show that $\C^{\times}$ acts on $(\A^{\bullet}, d)$ by $\sCDGA$ automorphisms and
the induced action on $H^1(\A)=H^1(X)$ comes from the Hodge decomposition of $H^1(X)$. Exactly as in 
\S\ref{ss86}, we infer that all irreducible components of  $\R^i_r (\A, \theta)$ are $\C^{\times}$--invariant,
which completes the proof.
\hfill $\square$

\subsection{Proof of Theorem \ref{thm:8gl}}
\label{ss87}

Part \eqref{gl1}. 
Let $E$ be an irreducible component of  the algebraic set $\R^i_r (\A, \theta)_0$. Pick a collection of regular functions on
$\cH (\pi_1(X), \BB)$, $\{ f_i \}$, defining the affine subvariety $ \V^i_r (X, \iota)$. Theorem \ref{thm:8main} \eqref{8m2}
guarantees the existence of an open neighbourhood $U$ of $0$ in the affine space $\F (\A, \bb)$, with the property that
each global analytic function $g_i=f_i \circ \eexp_{\BB}$ vanishes on $U\cap E$. 
It follows that $g_i$ vanishes on $E$, since otherwise its zero locus, being a proper analytic subset in an irreducible variety, would have empty interior.

To prove Part \eqref{gl2}, fix $\omega$ and $i$. Set $m= \min \{ b_i (X, \eexp_{\BB}(t\cdot \omega)) \, \mid \, t\in \K \}$, where 
the $\K$--action on $\F (\A, \bb)$ comes from the positive weight decomposition of $\A^{\bullet}$, as explained in \S\ref{ss85}.

Consider the set $M:= \{ t\in \K \, \mid \, b_i (X, \eexp_{\BB}(t\cdot \omega))=m \}$. By construction,
$\K \setminus M= \{ t\in \K \, \mid \, t\cdot \omega \in  \eexp_{\BB}^{-1}(\V^i_{m+1}(X, \iota)) \}$ is a proper subset of $\K$,
defined by global analytic equations (cf. Corollary \ref{lem:redjump}\eqref{rj1}), hence discrete in $\K$. It follows that, 
for $t\in \K^{\times}$ near $0$, $ b_i (X, \eexp_{\BB}(t\cdot \omega)) =m \le b_i (X, \eexp_{\BB}(\omega))$.

According to Theorem \ref{thm:8main},  $ b_i (X, \eexp_{\BB}(t\cdot \omega)) =\beta_i (\A, t\cdot \omega)$, for 
$t\in \K^{\times}$ close enough to $0$. Finally, $\beta_i (\A, t\cdot \omega)= \beta_i (\A, \omega)$, by $\K^{\times}$--invariance
of resonance varieties (cf. \S\ref{ss85}). In conclusion, $\beta_i (\A, \omega) \le b_i (X, \eexp_{\BB}(\omega))$, as asserted.
\hfill $\square$

\subsection{Proof of Corollary \ref{cor:compactK}}
\label{ssc87}
In the case $\omega \in H^{1,0}(X)$, the missing inequality between the two Betti numbers comes from Corollary 4.2 in \cite{Dcompo},
which also yields  the second claim concerning the spectral sequence.

The equality of the two Betti numbers in the case $\omega \in H^{0,1}(X)$ follows from the fact that 
they are both invariant under complex conjugation.
\hfill $\square$

\subsection{Degenerate Farber--Novikov spectral sequences}
\label{ss88}

We start with a summary of relevant facts about the Farber--Novikov spectral sequence, following \cite[pp.~184--194]{F}.
Let $X$ be a connected, finite CW complex, with fundamental group $G$. Let $\nu :G \surj \Z$ be a group epimorphism.

Denote by $\nu_\C \in H^1(X,\C)$ the associated cohomology class, and let $(H^{\bullet}(X,\C), \nu_\C \cdot)$ be the
corresponding Aomoto complex; see \S\ref{ss84}. Let $\nu^*: \C^{\times} \hookrightarrow \T (G)$ be the algebraic map 
induced on character tori. For a fixed $i\ge 0$, set $m_i = \min \{ b_i (X, \nu^*(t))\, \mid \, t\in \C^{\times} \}$. Set
$M_i:= \{  t\in \C^{\times}\, \mid \, b_i (X, \nu^*(t))=m_i \}$. By the
same line of reasoning as in the proof of Theorem \ref{thm:8gl}\eqref{gl2}, we infer that $M_i$
is the complement of a finite set, $F_i \subseteq \C^{\times}$. Clearly, $F_i =\emptyset$ for $i>\dim X$, hence $F:= \cup_{i\ge 0} F_i$
is finite.

The Farber--Novikov spectral sequence starts at $E^i_2= H^i (H^{\bullet}(X,\C), \nu_\C \cdot)$ and converges to 
$ H^i (X, {}_{\nu^*(t)}\C)$, where $t\in \C^{\times} \setminus F$. 

{\bf Proof of Theorem \ref{thm:8deg}.} By Corollary \ref{cor:qformal}, Theorem \ref{thm:8main}\eqref{8m2} holds for $X$,
in the rank $1$ case, with $q=\infty$ and $\A^{\bullet}= (H^{\bullet}(X,\C), d=0)$. Note also that $\A^{\bullet}$ has
positive weight (equal to degree), and the associated $\C^{\times}$--action on $\F (\A, \bb) \equiv H^1(X,\C)$ is
usual scalar multiplication. 

We have to show that $\beta_i (\A, \nu_\C)= m_i$, for all $i$. Note that $\nu^* (\T (\Z))= \eexp (\C \cdot \nu_C)$. It follows that,
if $t\in \C^{\times}$ is close enough to $0$, then $b_i (X, \eexp (t\nu_\C))=m_i$, for all $i$. Due to Theorem \ref{thm:8main}\eqref{8m2}, 
we may infer that $b_i (X, \eexp (t\nu_\C))= \beta_i (\A, t \nu_\C)$, for all $i$. Finally, $ \beta_i (\A, t \nu_\C)= \beta_i (\A,  \nu_\C)$,
for all $i$, by $\C^{\times}$--invariance of resonance varieties (cf. \S\ref{ss85}).  \hfill $\square$

\medskip

\begin{ack}
The  authors are grateful to the KIAS (Seoul), and respectively to the Max-Planck-Institut f\" ur Mathematik (Bonn),
where this work was completed, for hospitality and the excellent research atmosphere.
\end{ack}

\newcommand{\arxiv}[1]
{\texttt{\href{http://arxiv.org/abs/#1}{arXiv:#1}}}

\bibliographystyle{amsplain}

\begin{thebibliography}{00}

\bibitem{Al} L.~Alaniya, 
{\em Cohomology with local coefficients of some nilmanifolds}, 
Russian Math. Surveys \textbf{54} (1999), no.~5, 1019--1020. 

\bibitem{ABC}  J.~Amor{\'o}s, M.~Burger, K.~Corlette, D.~Kotschick, 
D.~Toledo, {\em Fundamental groups of compact {K}\"ahler 
manifolds}, Math. Surveys Monogr., vol.~44, Amer. Math. Soc., 
Providence, RI, 1996. 

\bibitem{A} D.~Arapura, 
{\em Geometry of cohomology support loci for local systems  {\rm I}}, 
J. Algebraic Geom. \textbf{6} (1997), no.~3, 563--597.  

\bibitem{Be} A.~Beauville, 
{\em Annulation du $H\sp 1$ pour les fibr\' es en droites plats}, in: 
Complex algebraic varieties (Bayreuth, 1990), pp.~1--15,
Lecture Notes in Math., vol.~1507, Springer, Berlin, 1992. 

\bibitem{BS} R.~Body, M.~Mimura, H.~Shiga, D.~Sullivan,
{\em $p$--universal spaces and rational homotopy types},
Comment. Math. Helv. \textbf{73} (1998), 427--442.

\bibitem{EB} E.~Borel,
{\em Sur les z\' eros des fonctions enti\`eres},
Acta Math. \textbf{20} (1897), no.~1, 357--396.

\bibitem{BG} A.~K.~Bousfield, V.~K.~A.~M. Gugenheim,
{\em On PL De Rham theory and rational homotopy type},
Memoirs Amer. Math. Soc., vol.~8, no.~179, Amer. Math. Soc.,
Providence, RI, 1976. 

\bibitem{B} E.~Brieskorn,
{\em Sur les groupes de tresses}, in:
S\'{e}minaire Bourbaki 1971/72,  pp.~21--44, Lecture Notes in Math., 
vol.~317, Springer-Verlag, 1973. 

\bibitem{Br}  K.~S.~Brown,
{\em Cohomology of groups}, Grad. Texts in Math., 
vol.~87, Springer-Verlag, New York-Berlin, 1982.

\bibitem{Bu} N.~Budur,
{\em Complements and higher resonance varieties of hyperplane arrangements},
Math. Res. Lett. \textbf{18} (2011), no.~5, 859--873.

\bibitem{BW} N.~Budur, B.~Wang,
{\em Cohomology jump loci of quasi-projective varieties},
preprint {\tt arXiv:1211.3766}.


\bibitem{CE} H.~Cartan, S.~Eilenberg,
{\em Homological algebra}, Princeton Math. Series, vol.~9,
Princeton Univ. Press, Princeton, 1956.
 
\bibitem{CP} B.~Cenkl, R.~Porter,
{\em Malcev's completion of a group and differential forms},
J. Differential Geom. \textbf{15} (1980), 531--542.

\bibitem{C}  K.-T.~Chen,
{\em Extension of $C\sp{\infty}$ function algebra by integrals
and {M}alcev completion of $\pi\sb{1}$}, Advances in Math.
\textbf{23} (1977), no.~2, 181--210.  

\bibitem{De2}
P. Deligne,  {\em Th\'eorie de Hodge II}, Publ. Math. IHES,\textbf{ 40} (1971), 5--58.

\bibitem{DGMS}  P.~Deligne, P.~Griffiths, J.~Morgan, D.~Sullivan,
{\em Real homotopy theory of {K}\"{a}hler manifolds},
Invent. Math. \textbf{29} (1975), no.~3, 245--274.

\bibitem{Dcompo} A.~Dimca, 
{\em Characteristic varieties and logarithmic differential 1-forms}, 
Compositio Math. \textbf{146} (2010),  129--144.

\bibitem{DP} A.~Dimca, S.~Papadima,
{\em Finite Galois covers, cohomology jump loci, formality properties, and multinets}, 
Annali Scuola Norm. Sup. Pisa \textbf{10}  (2011),  253--268.

\bibitem{DPS} A.~Dimca, S.~Papadima, A.~Suciu,
{\em Topology and geometry of cohomology jump loci}, 
Duke Math. Journal \textbf{148} (2009), no.~3, 405--457.


\bibitem{ESV} H.~Esnault, V.~Schechtman, E.~Viehweg,
{\em Cohomology of local systems of the complement of 
hyperplanes}, Invent. Math. \textbf{109} (1992), no.~3, 
557--561; Erratum, ibid. \textbf{112} (1993), no.~2, 447.

\bibitem{Fa} M.~Falk,
{\em Arrangements and cohomology},
Ann. Combin. \textbf{1} (1997), no.~2, 135--157.  

\bibitem{F1} M.~Farber,
{\em Topology of closed $1$--forms and their critical points},
Topology \textbf{40} (2001), 235--258.

\bibitem{F} M.~Farber, 
{\em Topology of closed one-forms}, 
Math. Surveys Monogr., vol.~108, Amer. Math. Soc., 
Providence, RI, 2004. 

\bibitem{FM} W.~Fulton, R.~MacPherson,
{\em A compactification of configuration spaces},
Annals of Math. \textbf{139} (1994), 183--225.

\bibitem{GM} W.~Goldman, J.~Millson,
{\em The deformation theory of representations of fundamental 
groups of compact {K}\"{a}hler manifolds}, Inst. Hautes \'{E}tudes 
Sci. Publ. Math. \textbf{67} (1988), 43--96.

\bibitem{GT} A.~Gomez Tato,
{\em Th\' eorie de Sullivan pour la cohomologie \` a coefficients locaux},
Trans. Amer. Math. Soc. \textbf{330} (1992), 295--305.

\bibitem{GL} M.~Green, R.~Lazarsfeld,
{\em Deformation theory, generic vanishing theorems, and some conjectures of
Enriques, Catanese and Beauville}, Invent. Math. \textbf{90} (1987), 389--407.

\bibitem{Ha1} R.~Hain,
{\em The geometry of the Mixed Hodge Structure on the fundamental group},
Proc. Symposia Pure Math. \textbf{46} (1987), 247--282.

\bibitem{Ha} R.~Hain, 
{\em Completions of mapping class groups and the cycle $C-C^{-}$},
in: Mapping class groups and moduli spaces of Riemann surfaces, pp.~75--105,
Contemp. Math., vol.~150, Amer. Math. Soc., Providence, RI, 1993. 

\bibitem{HM} R.~Hain, M.~Matsumoto,
{\em Weighted completion of Galois groups and Galois actions on the fundamental group of
$\PP^1 \setminus \{ 0,1,\infty \}$}, Compositio Math. \textbf{139} (2003), 119--167.

\bibitem{H} S.~Halperin,
{\em Lectures on minimal models},
M\' em. Soc. Math. France, vol.~230, 1983.

\bibitem{Hat} A.~Hattori,
{\em Spectral sequence in the De Rham cohomology of fibre bundles},
J. Fac. Sci. Univ. Tokyo \textbf{8} (1960), 289--331.

\bibitem{HMR} P.~Hilton, G.~Mislin, J.~Roitberg,
{\em Localization of nilpotent groups and spaces},
North-Holland Math. Studies, vol.~15, North-Holland,
Amsterdam, 1975.

\bibitem{HS} P.~J.~Hilton, U.~Stammbach,
{\em A course in homological algebra},
Grad. Texts in Math., vol.~4, Springer-Verlag, New York, 1971.

\bibitem{KM} M.~Kapovich, J.~Millson, 
{\em On representation varieties of {A}rtin groups, projective 
arrangements and the fundamental groups of smooth complex 
algebraic varieties}, Inst. Hautes \'{E}tudes Sci. Publ. Math.
\textbf{88} (1998), 5--95. 

\bibitem{Kas} H.~Kasuya,
{\em De Rham and Dolbeault cohomology of solvmanifolds with
local systems}, preprint {\tt arXiv:1207.3988}.

\bibitem{KP} T.~Kohno, A.~Pajitnov,
{\em Twisted Novikov homology and jump loci in formal and
hyperformal spaces}, preprint {\tt arXiv:1302.6785}.

\bibitem{L}  M.~Lazard,
{\em Sur les groupes nilpotents et les anneaux de {L}ie},
Ann. Sci. \'{E}cole Norm. Sup. \textbf{71} (1954), 101--190. 

\bibitem{LY} A.~Libgober, S.~Yuzvinsky,
{\em Cohomology of {O}rlik--{S}olomon algebras and local 
systems},  Compositio Math. \textbf{21} (2000), no.~3, 337--361.  

\bibitem{LM} A.~Lubotzky, A.~R.~Magid,
{\em Varieties of representations of finitely generated groups},
Memoirs Amer. Math. Soc., vol.~58, no.~336, Amer. Math. Soc.,
Providence, RI, 1985.  

\bibitem{Mac}  A.~Macinic,
{\em Cohomology rings and formality properties of nilpotent groups},
J. Pure Appl. Algebra \textbf{214} (2010), 1818--1826.

\bibitem{MaP} A.~Macinic, S.~Papadima,
{\em Characteristic varieties of nilpotent groups and 
applications}, in: Proceedings of the Sixth Congress 
of Romanian Mathematicians (Bucharest, 2007), 
pp.~57--64, vol.~1, Romanian Academy, Bucharest, 2009.

\bibitem{Mal} A.~L.~Malcev,
{\em Nilpotent groups without torsion},
Izvest. Akad. Nauk SSSR, ser. Math. \textbf{13} (1949), 201--212.

\bibitem{MP} M.~Markl, S.~Papadima,
{\em Homotopy {L}ie algebras and fundamental groups via 
deformation theory}, Annales Inst. Fourier \textbf{42} (1992), 
no.~4, 905--935.  

\bibitem{Mil} D.~V.~Millionschikov,
{\em Cohomology of solvmanifolds with local coefficients and problems of the Morse-Novikov theory},
Russian Math. Surveys \textbf{57} (2002), 813--814.

\bibitem{M}  J.~W.~Morgan,
{\em The algebraic topology of smooth algebraic varieties},
Inst. Hautes \'{E}tudes Sci. Publ. Math. \textbf{48} (1978), 137--204.

\bibitem{OS} P.~Orlik, L.~Solomon,
{\em Combinatorics and topology of complements of
hyperplanes}, Invent. Math. \textbf{56} (1980), 167--189.

\bibitem{OT} P.~Orlik, H.~Terao,
{\em Arrangements of hyperplanes}, Grundlehren Math. Wiss., vol.~300,
Springer-Verlag, Berlin, 1992.

\bibitem{PS} S.~Papadima, A.~Suciu,
{\em The spectral sequence of an equivariant chain 
complex and homology with local coefficients}, 
Trans. Amer. Math. Soc. \textbf{362} (2010), 
no.~5, 2685--2721.


\bibitem{PS1} S.~Papadima, A.~Suciu,
{\em Bieri--{N}eumann--{S}trebel--{R}enz invariants and 
homology jumping loci}, Proc. London Math. Soc. \textbf{100} 
(2010), no.~3, 795--834.

\bibitem{P} P.~F.~Pickel,
{\em Rational cohomology of nilpotent groups and Lie algebras},
Communications in Alg. \textbf{6} (1978), 409--419.

\bibitem{Q}  D.~Quillen,
{\em Rational homotopy theory}, Annals of Math. 
\textbf{90} (1969), 205--295.

\bibitem{STV} V. ~Schechtman, H. ~Terao, A. ~Varchenko, 
{\em Local systems over complements of hyperplanes and the
Kac-Kazhdan condition for singular vectors},
J. Pure Appl. Algebra, \textbf{100} (1995), 93--102. 

\bibitem{S}  D.~Sullivan,
{\em Infinitesimal computations in topology}, Inst. Hautes 
\'{E}tudes Sci. Publ. Math. \textbf{47} (1977), 269--331.

\bibitem{Sw} R.~M.~Switzer,
{\em Algebraic Topology--homotopy and homology},
Grundlehren Math. Wiss., vol.~212,
Springer-Verlag, Berlin, 1975.

\bibitem{T} J.~C.~Tougeron,
{\em Id\'{e}aux de fonctions diff\'{e}rentiables}, Ergebnisse 
der Math., vol.~71, Springer-Verlag, Berlin-New York, 1972. 

\bibitem{Vbook}  C.~Voisin, 
{\em Hodge Theory and Complex Algebraic Geometry I}, Cambridge Studies in Advanced Math.
\textbf{76}, CUP, 2002.

\bibitem{V}  C.~Voisin, 
{\em On the homotopy types of compact K\"ahler and complex projective manifolds}, 
Inventiones Math. \textbf{157} (2004), 329--343.

\bibitem{Wa} B.~Wang,
{\em Cohomology jump loci of compact K\"ahler manifolds},
preprint {\tt arXiv:1303.6937}.


\bibitem{W}  G.~W.~Whitehead,
{\em Elements of homotopy theory}, Grad. Texts in Math., 
vol.~61, Springer-Verlag, New York, 1978.  


\end{thebibliography}

\end{document}